\numberwithin{equation}{section}
\numberwithin{equation}{section}
\newtheorem{Theorem}{Theorem}[section]
\newtheorem{Corollary}[Theorem]{Corollary}
\newtheorem{Lemma}[Theorem]{Lemma}
\newtheorem{Remark}[Theorem]{Remark}
\newtheorem{Definition}{Definition}[section]
\begin{document}

\title{Computing the unique CANDECOMP/PARAFAC decomposition of unbalanced tensors by
homotopy method}
\date{}

\author{ Yueh-Cheng Kuo\thanks{%
Department of Applied Mathematics, National University of Kaohsiung,
Kaohsiung 811, Taiwan; \texttt{yckuo@nuk.edu.tw} Research supported in part by the Taiwan MOST Grant 102-2115-M-390-005-MY2.} \and Tsung-Lin Lee\thanks{%
Department of Applied Mathematics, National Sun Yat-sen University,
Kaohsiung 804, Taiwan; \texttt{leetsung@math.nsysu.edu.tw} Research supported in part by the Taiwan MOST Grant 104-2115-M-110-003.}}
\maketitle

\begin{abstract}
The Candecomp/Parafac (CP) decomposition of the tensor whose maximal
dimension is greater than its rank is considered. We derive the upper bound
of rank under which the generic uniqueness of CP decomposition is
guaranteed. The bound only depends on the dimension of the tensor and the
proof is constructive. Under these conditions, an algorithm applying
homotopy continuation method is developed for computing the CP decomposition
of tensors.
\end{abstract}

%\begin{keyword}
%the Schr\"{o}dinger equation \sep
%numerical simulations \sep
\textbf{Keywords.} Tensor, canonical decomposition, parallel factors, homotopy method. %\PACS  02.60.Lj \sep 03.65.Ge
%\end{keyword}

% ======================================================================

\section{Introduction}

The Candecomp/Parafac decomposition has many applications in psychometrics, chemometrics,
signal processing, numerical linear algebra, computer vision, numerical
analysis, data mining, neuroscience, graph analysis, and elsewhere \cite{Carroll:1970,Harshman:1970,Kolda:2009}.
An $N$th-order tensor $\mathcal{A}\in \mathbb{R}^{I_{1}\times I_{2}\times
\cdots \times I_{N}}$ is a multidimensional or $N$-way array. A first-order tensor is a vector, a second-order tensor is a
matrix, and tensors of order three or higher are called higher-order
tensors. An $N$th-order rank-1 tensor $\mathcal{A}=[a_{i_{1},i_{2},\cdots
,i_{N}}]$ is defined as the outer product of $N$ nonzero vectors $\mathbf{u}%
_{n}\in \mathbb{R}^{I_{n}}$ for $n=1,\cdots ,N$, denoted by $\mathbf{u}%
_{1}\circ \mathbf{u}_{2}\circ \ldots \circ \mathbf{u}_{N}$. That is,
\begin{equation*}
a_{i_{1},i_{2},\cdots ,i_{N}}=u_{1,i_{1}}u_{2,i_{2}}\cdots u_{N,i_{N}},
\end{equation*}%
where $u_{j,i_{j}}$ is the $i_{j}$-th component of vector $\mathbf{u}_{j}$.
In this case, we write $\mathcal{A}=\mathbf{u}_{1}\circ \mathbf{u}_{2}\circ
\ldots \circ \mathbf{u}_{N}$. The rank of a tensor $\mathcal{A}$, denoted by
rank$(\mathcal{A})$, is the minimal number of rank-$1$ tensors that generate
$\mathcal{A}$ as their sum. Suppose that rank$(\mathcal{A})=R$, then it can
be written as
\begin{equation}   \label{decom}
\mathcal{A}=\sum_{r=1}^{R}\mathbf{u}_{r}^{(1)}\circ \mathbf{u}%
_{r}^{(2)}\circ \ldots \circ \mathbf{u}_{r}^{(N)},
\end{equation}
where $ \mathbf{u}_{r}^{(n)}\in \mathbb{R}^{I_n}$ is nonzero vector for $1\leqslant n\leqslant N$
and $1\leqslant r\leqslant R$. Let
\begin{align}\label{eqU}
U^{(n)}=[\mathbf{u}_1^{(n)},\mathbf{u}_2^{(n)},\ldots, \mathbf{u}%
_R^{(n)}]\in \mathbb{R}^{I_n\times R}, \text{ for } 1\leqslant n\leqslant N,
\end{align}
be the factor matrices of $\mathcal{A}$.
If the equation \eqref{decom} holds, we denote $\mathcal{A}=[\!| U^{(1)}, U^{(2)},\ldots, U^{(N)} |\!]$.
The decomposition \eqref{decom} is called the canonical decomposition (CAMDECOMP) \cite%
{Carroll:1970,Kruskal:1977} or the parallel factor (PARAFAC) \cite%
{Harshman:1970}. Throughout this paper, the CP decomposition refers to the CAMDECOMP/PARAFAC decomposition.

The CP decomposition in \eqref{decom} is said to be unique if for any expression
\begin{equation*}
\mathcal{A}=\sum_{r=1}^{R}\tilde{\mathbf{u}}_{r}^{(1)}\circ \tilde{\mathbf{u}}_{r}^{(2)}\circ \ldots \circ \tilde{\mathbf{u}}_{r}^{(N)},
\end{equation*}
there exists a permutation $\pi$ of $\{1, \ldots, R \}$ such that for $ 1 \leq r \leq R$,
\begin{equation*}
\mathbf{u}_{r}^{(1)}\circ \mathbf{u}_{r}^{(2)}\circ \ldots \circ \mathbf{u}_{r}^{(N)} =
\tilde{\mathbf{u}}_{\pi(r)}^{(1)}\circ \tilde{\mathbf{u}}_{\pi(r)}^{(2)}\circ \ldots \circ \tilde{\mathbf{u}}_{\pi(r)}^{(N)}.
\end{equation*}
However, the decomposition of a tensor may not be unique. The conditions that
guarantees the uniqueness of CP decomposition have been widely investigated \cite{Bocci:2013,Chiantini:2012,Kruskal:1977,Lathauwer:2006}. The
most general and well-known result on uniqueness is due to Kruskal \cite%
{Kruskal:1977,Kruskal:1989}. Kruskal's result is for real third-order
tensor. A concise proof for complex tensors was given in \cite%
{Sidiropoulos:2000}. Sidiropoulos and Bro \cite{SidiropoulosBro:2000}
extended Kruskal's result to $N$th-order tensor. They showed the sufficient
condition for uniqueness for the CP decomposition is
\begin{align*}
\sum_{n=1}^{N}\mathrm{rank}_k(U^{(n)})\geqslant 2R+N-1,
\end{align*}
where $U^{(n)}$ are given in \eqref{eqU} and $\mathrm{rank}_k(U^{(n)})$ is
defined as the maximum value $k$ such that any $k$ columns of $U^{(n)}$ are
linearly independent. \cite{Ten Berge:2002} showed that the sufficient
condition is also necessary for tensors of rank $R = 2$ and $R = 3 $, but
not for $R > 3$. \cite{Liu:2001} showed that a necessary condition for
uniqueness of the CP decomposition is
\begin{align*}
\min_{n=1,\ldots,N}\mathrm{rank}\left(U^{(1)}\odot\cdots \odot
U^{(n-1)}\odot U^{(n+1)} \odot\cdots \odot U^{(N)}\right)=R.
\end{align*}

A tensor is called {\it unbalanced} if the maximal dimension is greater than its rank.
De Lathauwer \cite{Lathauwer:2006} showed that a third-order unbalanced tensor $%
\mathcal{A}\in \mathbb{R}^{I_1\times I_2\times I_3}$ of rank $R\leqslant I_3$ has a CP
decomposition that is {\it generically unique} if
\begin{align*}
R(R-1)\leqslant I_1(I_1-1)I_2(I_2-1)/2.
\end{align*}
A fourth-order unbalanced tensor $\mathcal{A}\in \mathbb{R}^{I_1\times I_2\times
I_3\times I_4}$ with rank$(\mathcal{A})=R\leqslant I_4$ has a CP decomposition that is
{\it generically unique} if
\begin{align*}
R(R-1)\leqslant
I_1I_2I_3(3I_1I_2I_3-I_1I_2-I_1I_3-I_2I_3-I_1-I_2-I_3+3)/4.
\end{align*}
Under this sufficient condition,  simultaneous diagonalization method is provided in \cite{Lathauwer:2006}.  Note that the CP decomposition of an $N$th-order tensor $\mathcal{A}$  of rank $R$  is ``{\it generically unique}" means that the Lebesgue measure of the set
\begin{align*}
\left\{(U^{(1)},\ldots, U^{(N)}):\begin{array}{l} \text{the CP decomposition of tensor }\\
\text{$\mathcal{A}=[\!| U^{(1)},\ldots, U^{(N)} |\!] $ is not unique.} \end{array} \right\}
\end{align*}
is zero in $\mathbb{R}^{I_1\times R}\times \cdots\times \mathbb{R}^{I_N\times R}$.

%the set of the factor matrices, $U^{(n)}\in\mathbb{R}^{I_n\times R}$ for $n=1,\ldots,N$, such that the CP decomposition of  $\mathcal{A}$ in \eqref{decom} is not unique is  Lebesgue measure zero in $\mathbb{R}^{I_1\times R}\times \cdots\times \mathbb{R}^{I_N\times R}$.

For a complex unbalanced tensor,  \cite{Bocci:2013,Chiantini:2012} provided a sharp upper bound of rank to guarantee  generic uniqueness for CP decomposition.  More precisely,  for a tensor $\mathcal{A}\in \mathbb{C}^{I_{1}\times I_{2}\times \cdots \times I_{N}}$
with $I_N\geqslant R^*$, where
\begin{align}  \label{Rstar}
R^*=\prod_{i=1}^{N-1}I_{i}-\sum_{i=1}^{N-1}(I_{i}-1),
\end{align}
one has
\begin{itemize}
\item[(i)] if $\mathrm{rank}(\mathcal{A})< R^*$, then the CP decomposition of $\mathcal{A}$ is generically unique;

\item[(ii)] if $\mathrm{rank}(\mathcal{A})=R^*$, then $\mathcal{A}$ has
finitely many CP decompositions generically;

\item[(iii)] if $\mathrm{rank}(\mathcal{A})> R^*$, then $\mathcal{A}$ has
infinitely many CP decompositions generically.
\end{itemize}
Several methods have been provided for computing the CP decomposition such as, alternating least squares (ALS) \cite{Kolda:2009}, nonlinear least squares (NLS) \cite{Sorber2012, Sorber2013} and unconstrained nonlinear optimization. Those methods may take many iterations to converge and are not guaranteed to converge to the solution. The final solution heavily depends on the starting guess \cite{Kolda:2009}.

In this paper, we consider the CP decomposition of a real unbalanced tensor. The rank of a real tensor may actually be different over $\mathbb{R}$ and
$\mathbb{C}$, see \cite{Kolda:2009}.   We show that when rank$(\mathcal{A})\leqslant R^*$,  computing the CP decomposition of $\mathcal{A}$ is equivalent to solving the system of polynomial equations which are determined by the full rank factorization of the matricization of $\mathcal{A}$. Moreover, for almost all such tensors, the corresponding solutions of the system of  polynomial equations are isolated.
Based on this approach, we develop a homotopy algorithm to compute the CP decomposition.  The numerical experiments show that if rank$(\mathcal{A})<R^*$, the unique CP decomposition of tensor $\mathcal{A}$ can always be found by this method, and if rank$(\mathcal{A})=R^*$, all possible CP decompositions can also be obtained.

%A few features distinguish our work from existing studies in algebraic geometry \cite{Bocci:2013,Chiantini:2012}: (i) we are interested in the CP decomposition of tensors over $\mathbb{R}$ as opposed to the decomposition over $\mathbb{C}$. Note that the rank of a real-valued tensor may actually be different over $\mathbb{R}$ and
%$\mathbb{C}$, see\cite{Kolda:2009}. (ii) Our derivation of the uniqueness condition is constructive and provides an algorithm to compute the CP decomposition.

This paper is organized as follows. The notations and preliminary results are in section 2. In section 3 we consider the case of third-order tensor. The fourth-order tensors are discussed in section 4. Along the same course, the results can be generalized to tensors of higher order. Some numerical experiments are shown in section 5.

\section{Notations and preliminaries}

In this section, we shall introduce some definitions, notation and give some
preliminary results. Throughout this paper, we use calligraphic letters to denote
a tensor, capital letters to denote matrices, and
lowercase (bold) letters to denote scalars (vectors). For a third-order
tensor $\mathcal{A}$, the horizontal, lateral, and frontal slides of $\mathcal{A}$, denoted by $\mathcal{A}_{i::}$, $\mathcal{A}_{:j:}$ and $\mathcal{A}_{::k}$, respectively. For a matrix $A\in \mathbb{R}^{n\times m}$, $A^{\top}$ denotes the transpose of $A$, and $\mathcal{N}(A)$ denotes the null space of $A$. The symbol $\otimes$ denotes the Kronecker product. For two
matrices $A=[a_{ij}]\in \mathbb{R}^{n\times m}$, $B\in \mathbb{R}^{p\times k}$
\begin{align*}
A\otimes B=\left[%
\begin{array}{cccc}
a_{11}B & a_{12}B & \cdots & a_{1m}B \\
a_{21}B & a_{22}B & \cdots & a_{2m}B \\
\vdots & \vdots & \cdots & \vdots \\
a_{n1}B & a_{n2}B & \cdots & a_{nm}B \\
\end{array}%
\right]\in \mathbb{R}^{np\times mk}.
\end{align*}
Let $A=[\mathbf{a}_{1},\mathbf{a}_{2},\cdots,\mathbf{a}_m]\in \mathbb{R}%
^{n\times m}$ and $B=[\mathbf{b}_{1},\mathbf{b}_{2},\cdots,\mathbf{b}_m]\in
\mathbb{R}^{k\times m}$. The $\mathrm{vec}$ operator of $A$ creates a vector
$\mathrm{vec}(A)=[\mathbf{a}_{1}^{\top},\mathbf{a}_{2}^{\top},\cdots,\mathbf{%
a}_{m}^{\top}]^{\top}\in \mathbb{R}^{nm}$ and the symbol $\odot$ denotes the
Khatri-Rao (or columnwise Kronecker) product \cite{Rao:1971}
\begin{align*}
A\odot B=[\mathbf{a}_1\otimes \mathbf{b}_1,\mathbf{a}_2\otimes \mathbf{b}%
_2,\cdots,\mathbf{a}_m\otimes \mathbf{b}_m]\in \mathbb{R}^{nk\times m}.
\end{align*}
Let  $\mathbf{1}_n=[1,\cdots,1]^{\top}\in \mathbb{R}^n$, $\mathbf{0}_n=[0,\cdots,0]^{\top}\in \mathbb{R}^n$.  We use $\mathbb{I}_n$  and $0_{n\times m}$ to denote the  $n\times n$ identity matrix and $n\times m$ zero matrix, respectively.
%$\mathbf{e}_j^n\in \mathbb{R}^n$ be the $j$th column vector of identity matrix $\mathbb{I}_n\in \mathbb{R}^{n\times n}$. An $n\times m$ zero matrix is denoted by $0_{n\times m}$.
%\textcolor{blue}{
%For $N$ nonzero vectors $\mathbf{u}_n\in \mathbb{R}^{I_n}$ for $1\leqslant
%n\leqslant N$, the outer product of those $N$ vectors, denoted by $\mathbf{u}%
%_1\circ\mathbf{u}_2\circ\ldots \circ\mathbf{u}_N$, is an $I_1\times
%I_2\times \cdots \times I_N$ rank-1 tensor whose $(i_1,i_2,\cdots,i_N)$
%entry is $u_{1,i_1}u_{2,i_2}\cdots u_{N,i_N}$, where $u_{j,i_j}$ is the $i_j$%
%-th component of vector $\mathbf{u}_j$.}

The matrix products have following properties:
\begin{align}  \label{eq2.1}
\begin{array}{l}
\mathrm{vec}(AXB)=(B^{\top}\otimes A )\mathrm{vec}(X), \\
\mathrm{vec}(ADC)=(C^{\top}\odot A )\mathbf{d},\\
(A\otimes\mathbf{1}_k^{\top})\odot(\mathbf{1}_n^{\top}\otimes B)=A\otimes B,
\end{array}%
\end{align}
where $A\in \mathbb{R}^{m\times n}$, $B\in \mathbb{R}^{\ell\times k}$, $C\in \mathbb{R}^{n\times k}$, $D=\mathrm{diag}(d_1,\cdots,d_n)$ is diagonal and $\mathbf{d}%
=[d_1,\cdots,d_n]^{\top}$. %\begin{Definition}\label{def1.1}
%An $N$th-order tensor $\mathcal{A}=[a_{i_1,i_2,\cdots,i_N}]$ is rank 1 if it equals the outer product of $N$ vectors $\mathbf{u}_1,\mathbf{u}_2,\ldots,\mathbf{u}_N$. That is,
%\begin{align*}
%a_{i_1,i_2,\cdots,i_N}=u_{1,i_1}u_{2,i_2}\cdots u_{N,i_N},
%\end{align*}
%where $u_{j,i_j}$ is the $i_j$-th component of vector $\mathbf{u}_j$. The outer product of $\mathbf{u}_1,\mathbf{u}_2,\ldots,\mathbf{u}_N$ is denoted by $\mathbf{u}_1\circ\mathbf{u}_2\circ\ldots \circ\mathbf{u}_N$.
%\end{Definition}
%We now introduction the  canonical decomposition of a tensor $\mathcal{A}$ that is dealt with in this paper.
%\begin{Definition}\label{def1.2}
%A canonical decomposition (CANDECOMP) of a tensor $\mathcal{A}\in \mathbb{R}^{I_1\times I_2\times \cdots \times I_N}$ is a decomposition of $\mathcal{A}$ as a linear combination of a minimal number of rank-1 terms  \cite{Kruskal:1977}:
%\begin{align}\label{CANDECOMP}
%\mathcal{A}=\sum_{r=1}^R\lambda_r\mathbf{u}_1^{(r)}\circ\mathbf{u}_2^{(r)}\circ\ldots \circ\mathbf{u}_N^{(r)},
%\end{align}
%where $\|\mathbf{u}_i^{(r)}\|=1$ for $1\leqslant i \leqslant N$ and $1\leqslant r \leqslant R$.
%The rank of $\mathcal{A}$ (denoted by rank$(\mathcal{A})$) is the minimal number of rank-$1$ tensors, i.e.,  ${\rm rank}(\mathcal{A})=R$.
%\end{Definition}
The following well-known fact (e.g., \cite{Jiang:2009}) will be used in the proof of Lemma \ref{lem2.2}.
\begin{Lemma}\label{lem2.1}
Suppose that $h(\mathbf{x})$ is a nonzero polynomial function of variables $\mathbf{x}\in \mathbb{R}^n$. Then the zero set of $h(\mathbf{x})$, $\{\mathbf{x}\in \mathbb{R}^n|h(\mathbf{x})=0\}$, has Lebesgue measure zero in $\mathbb{R}^n$.
\end{Lemma}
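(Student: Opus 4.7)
The plan is to proceed by induction on the number of variables $n$, reducing higher-dimensional cases to the one-dimensional case via Fubini/Tonelli. The base case $n=1$ is the familiar statement that a nonzero univariate polynomial of degree $d$ has at most $d$ real roots (fundamental theorem of algebra, or elementary factorization over $\mathbb{R}$), so its zero set is finite and hence has Lebesgue measure zero.

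For the inductive step, assume the result for polynomials in $n-1$ variables. Given a nonzero polynomial $h(\mathbf{x}) = h(x_1,\ldots,x_n)$, I would organize it as a polynomial in the last variable,
\begin{equation*}
h(x_1,\ldots,x_n) = \sum_{k=0}^{d} p_k(x_1,\ldots,x_{n-1})\, x_n^k,
\end{equation*}
where $d$ is chosen so that the leading coefficient $p_d$ is not identically zero as a polynomial in $n-1$ variables. Such a $d$ exists precisely because $h$ is nonzero. By the inductive hypothesis, the exceptional set $E = \{(x_1,\ldots,x_{n-1}) : p_d(x_1,\ldots,x_{n-1})=0\}$ has Lebesgue measure zero in $\mathbb{R}^{n-1}$.

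Now let $Z = \{\mathbf{x} \in \mathbb{R}^n : h(\mathbf{x}) = 0\}$. This set is closed (hence Borel-measurable), so Tonelli's theorem applies to its indicator function. For any fixed $(x_1,\ldots,x_{n-1}) \notin E$, the slice map $x_n \mapsto h(x_1,\ldots,x_{n-1},x_n)$ is a nonzero univariate polynomial of degree $d$, so the slice $Z_{(x_1,\ldots,x_{n-1})}$ has one-dimensional Lebesgue measure zero by the base case. Writing
\begin{equation*}
\mathrm{vol}(Z) = \int_{\mathbb{R}^{n-1}} \mathrm{vol}\bigl(Z_{(x_1,\ldots,x_{n-1})}\bigr)\, dx_1\cdots dx_{n-1},
\end{equation*}
the integrand vanishes off the measure-zero set $E$, hence the integral is zero. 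This completes the induction.

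I do not anticipate a serious obstacle: the only subtle point is confirming measurability of $Z$ before invoking Tonelli, which is immediate from the continuity of $h$. The argument is otherwise a routine induction, and the inductive hypothesis is used both to locate a ``good'' leading coefficient $p_d$ and to control the base of the Tonelli integration.
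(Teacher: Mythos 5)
Your proof is correct and is the standard textbook argument (induction on the number of variables combined with Tonelli's theorem); for the record, the paper does not prove Lemma \ref{lem2.1} at all but simply states it as a well-known fact and cites a reference, so there is no in-paper proof to compare against. The one point worth double-checking in your write-up, which you handle correctly, is that the inductive step is valid even when $h$ has degree $0$ in $x_n$: then $p_0 = h$ is a nonzero polynomial in $n-1$ variables, off the measure-zero set $E$ the slice is a nonzero constant with empty zero set, and the Tonelli computation goes through unchanged.
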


%\begin{Lemma}
%\label{lem2.1}\cite{Lathauwer:2006} Consider $A\in \mathbb{R}^{I_1\times R}$
%and $B\in \mathbb{R}^{I_2\times R}$. Generically we have
%\begin{align*}
%\mathrm{rank}(A\odot B)=\min (I_1I_2,R).
%\end{align*}
%\end{Lemma}
%This lemma can be extended as follows.
\begin{Lemma}\label{lem2.2}
For generic $X=[\mathbf{x}_1,\mathbf{x}_2,\cdots,\mathbf{x}_R]\in \mathbb{R}^{I\times R}$, $Y=[\mathbf{y}_1,\mathbf{y}_2,\cdots,\mathbf{y}_R]\in \mathbb{R}^{J\times
R}$, $ Z=[\mathbf{z}_1,\mathbf{z}_2,\cdots,\mathbf{z}_R]\in \mathbb{R}^{K\times R}$, we have
\begin{itemize}
\item[(i)] $\mathrm{rank}(Y\odot X)=\min (IJ,R)$ and  $\mathrm{rank}(Z\odot Y\odot X)=\min (IJK,R)$;
\item[(ii)] {\rm rank}$(\left[
\Phi(\mathbf{x}_1,\mathbf{y}_1,\mathbf{z}_1), \cdots, \Phi(\mathbf{x}_R,%
\mathbf{y}_R,\mathbf{z}_R)%
\right])=\min
(IJK-I-J-K+2,R)$, where $\Phi:\mathbb{R}^{I+J+K}\rightarrow \mathbb{R}%
^{IJK-I-J-K+2}$ is defined by
\begin{align*}
\Phi(\tilde{\mathbf{x}},\tilde{\mathbf{y}},\tilde{\mathbf{z}})=\left[%
\begin{array}{r}
\tilde{{z}}_{1}( \tilde{\mathbf{y}}_{2}\otimes \tilde{\mathbf{x}}_{2}) \\
\tilde{{x}}_{1}( \tilde{\mathbf{z}}_{2}\otimes \tilde{\mathbf{y}}_{2}) \\
\tilde{{y}}_{1}( \tilde{\mathbf{z}}_{2}\otimes \tilde{\mathbf{x}}_{2}) \\
( \tilde{\mathbf{z}}_{2}\otimes \tilde{\mathbf{y}}_{2}\otimes \tilde{\mathbf{x}}_{2}) \\
\end{array}%
\right]\in \mathbb{R}^{IJK-I-J-K+2},
\end{align*}
and  $\tilde{\mathbf{x}}=[\tilde{x}_{1},\tilde{\mathbf{x}}_{2}^{\top}]^{\top}\in \mathbb{R}^{I}$, $%
\tilde{\mathbf{y}}=[\tilde{y}_{1},\tilde{\mathbf{y}}_{2}^{\top}]^{\top}\in \mathbb{R}^{J}$, $%
\tilde{\mathbf{z}}=[\tilde{z}_{1},\tilde{\mathbf{z}}_{2}^{\top}]^{\top}\in \mathbb{R}^{K}$.
\end{itemize}
\end{Lemma}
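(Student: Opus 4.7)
The plan is to apply Lemma \ref{lem2.1} in both parts. In each case the matrix in question has entries that are polynomial in the entries of $X$, $Y$, $Z$, so every maximal $s\times s$ minor is itself a polynomial, where $s$ denotes the claimed rank. Once a single evaluation is exhibited at which some particular $s\times s$ minor is nonzero, that minor is nonzero off a Lebesgue-null set and the generic rank equals $s$.

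For part (i), the $r$-th column of $Y \odot X$ is $\mathbf{y}_r \otimes \mathbf{x}_r$. I would choose distinct pairs $(i_r, j_r) \in \{1,\ldots,I\}\times \{1,\ldots,J\}$ for $r = 1, \ldots, \min(IJ, R)$ and take $\mathbf{x}_r$, $\mathbf{y}_r$ to be the $i_r$-th and $j_r$-th standard basis vectors of $\mathbb{R}^{I}$ and $\mathbb{R}^{J}$, respectively. Each $\mathbf{y}_r \otimes \mathbf{x}_r$ is then a distinct standard basis vector of $\mathbb{R}^{IJ}$, so those columns are linearly independent (any further columns, if $R > IJ$, may be chosen arbitrarily). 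The same argument with index triples handles $Z\odot Y\odot X$, yielding rank $\min(IJK, R)$.

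Part (ii) is the main content. Writing $d = IJK - I - J - K + 2$, I would first observe that the four blocks of $\Phi(\tilde{\mathbf{x}},\tilde{\mathbf{y}},\tilde{\mathbf{z}})$ list the $d$ pairwise distinct degree-$3$ monomials $\tilde{z}_1 \tilde{y}_j \tilde{x}_i$ (with $i,j\geq 2$), $\tilde{x}_1 \tilde{z}_k \tilde{y}_j$ (with $j,k\geq 2$), $\tilde{y}_1 \tilde{z}_k \tilde{x}_i$ (with $i,k\geq 2$), and $\tilde{z}_k \tilde{y}_j \tilde{x}_i$ (with $i,j,k\geq 2$); a direct count shows the total is exactly $d$, and the four blocks are separated by which of $\tilde{x}_1, \tilde{y}_1, \tilde{z}_1$ (if any) appears, so the $d$ monomials are pairwise distinct. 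Since distinct monomials are linearly independent in $\mathbb{R}[\tilde{\mathbf{x}},\tilde{\mathbf{y}},\tilde{\mathbf{z}}]$, any $\mathbf{c}\in\mathbb{R}^d$ for which $\mathbf{c}^{\top}\Phi$ is identically zero must itself vanish, so the image of $\Phi$ linearly spans $\mathbb{R}^{d}$. I would then select triples $(\mathbf{x}_r,\mathbf{y}_r,\mathbf{z}_r)$ for $r = 1, \ldots, \min(d, R)$ whose $\Phi$-values form a linearly independent subset of $\mathbb{R}^d$ (possible because the image spans), completing with arbitrary triples when $R > d$. This produces a specific matrix of rank $\min(d, R)$, and Lemma \ref{lem2.1} promotes this to the generic statement.

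The main---really the only---nonroutine step is recognizing the monomial structure of $\Phi$ and verifying the count matches $d$; the remaining work is the standard polynomial genericity principle plus elementary linear algebra.
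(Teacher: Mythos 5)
Your proof is correct. Both parts follow the same overall strategy as the paper: exhibit a specific evaluation of $(X,Y,Z)$ at which the matrix in question attains full rank, then observe that the relevant maximal minor is a polynomial that is not identically zero and apply Lemma~\ref{lem2.1}. The difference is in how the full-rank evaluation is obtained. The paper is fully explicit: for part (i) it sets $\widehat{X}=\mathbf{1}_J^{\top}\otimes\mathbb{I}_I$, $\widehat{Y}=\mathbb{I}_J\otimes\mathbf{1}_I^{\top}$ so that $\widehat{Y}\odot\widehat{X}=\mathbb{I}_{IJ}$, and for part (ii) it replaces $\Phi$ by an equivalent $\widehat{\Phi}$ (a permutation of the entries) and writes down concrete matrices $\widehat{X},\widehat{Y},\widehat{Z}$ for which $A(\widehat{X},\widehat{Y},\widehat{Z})$ is a block-diagonal identity. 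Your argument reaches the same conclusion less explicitly: for (i) by picking standard basis vectors per column, and for (ii) by observing that the $d$ entries of $\Phi$ are pairwise distinct degree-$3$ monomials, so $\mathbf{c}^{\top}\Phi\equiv 0$ forces $\mathbf{c}=\mathbf{0}$, hence the image of $\Phi$ spans $\mathbb{R}^d$ and contains $d$ linearly independent vectors. That spanning-from-monomial-independence observation is a clean, slightly more conceptual shortcut; the tradeoff is that it does not produce the explicit witness matrix the paper gives, though it easily could (choosing $\tilde{\mathbf{x}}=e_i,\tilde{\mathbf{y}}=e_j,\tilde{\mathbf{z}}=e_k$ with the appropriate index in position $1$ yields each standard basis vector in the image of $\Phi$, which is essentially what the paper's $\widehat{X},\widehat{Y},\widehat{Z}$ encode). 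Either version suffices to invoke Lemma~\ref{lem2.1} and conclude.
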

\begin{proof}
$(i)$ We only show $\mathrm{rank}(Y\odot X)=\min (IJ,R)$, the proof of $\mathrm{rank}(Z\odot Y\odot X)=\min (IJK,R)$ is similar.  The general case can be reduced to the $IJ\geqslant R$ case. If $IJ< R$, it suffices to prove that the result holds for any $R$ columns. Let $h(X,Y)$ be the determinant of leading $R\times R$ submatrix of $Y\odot X$. From Lemma \ref{lem2.1}, it suffices to show that $h(X,Y)$ is nonzero. Let $\widehat{X}=\mathbf{1}_J^{\top}\otimes \mathbb{I}_I\in \mathbb{R}^{I\times IJ}$, $\widehat{Y}=\mathbb{I}_J\otimes\mathbf{1}_I^{\top}\in \mathbb{R}^{J\times IJ}$, it follows from \eqref{eq2.1} that $\widehat{Y}\odot \widehat{X}=\mathbb{I}_{IJ}$. Let $X=\widehat{X}(:,1:R)$ and $Y=\widehat{Y}(:,1:R)$ be the first $R$ columns of $\widehat{X}$ and $\widehat{Y}$, respectively. It is easily seen that
%\begin{align*}
%\mathbf{y}_{I(j-1)+i}=e_j^J,\ \ \mathbf{x}_{I(j-1)+i}=e_i^I, \text{ for }1\leqslant i\leqslant I, \ 1\leqslant j\leqslant J\text{ and }I(j-1)+i\leqslant R,
%\end{align*}
%where $e_j^J\in \mathbb{R}^J$ and $e_i^I\in \mathbb{R}^I$ are $j$th and $i$th column vectors of the identity matrix $I_J$ and $I_I$, respectively.
$Y\odot X=(\widehat{Y}\odot \widehat{X})(:,1:R)=[\mathbb{I}_R,0]^{\top}\in \mathbb{R}^{\ell\times R}$ and hence $h(X,Y)$ is nonzero.

$(ii)$ Similarly, we only consider the $IJK-I-J-K+2\leqslant R$ case. Let $A(X,Y,Z)=\left[
\widehat{\Phi}(\mathbf{x}_1,\mathbf{y}_1,\mathbf{z}_1), \cdots, \widehat{\Phi}(\mathbf{x}_R,%
\mathbf{y}_R,\mathbf{z}_R)%
\right]$, where
\begin{align*}
\widehat{\Phi}(\tilde{\mathbf{x}},\tilde{\mathbf{y}},\tilde{\mathbf{z}})=\left[%
\begin{array}{r}
\tilde{\mathbf{z}}\otimes\tilde{\mathbf{y}}_{2}\otimes \tilde{\mathbf{x}}_{2} \\
\tilde{x}_{1}( \tilde{\mathbf{z}}_{2}\otimes \tilde{\mathbf{y}}_{2}) \\
\tilde{y}_{1}( \tilde{\mathbf{z}}_{2}\otimes \tilde{\mathbf{x}}_{2}) \\
\end{array}%
\right]\in \mathbb{R}^{IJK-I-J-K+2},
\end{align*}
where $\tilde{\mathbf{x}}=[\tilde{x}_{1},\tilde{\mathbf{x}}_{2}^{\top}]^{\top}\in \mathbb{R}^{I}$, $%
\tilde{\mathbf{y}}=[\tilde{y}_{1},\tilde{\mathbf{y}}_{2}^{\top}]^{\top}\in \mathbb{R}^{J}$, $%
\tilde{\mathbf{z}}=[\tilde{z}_{1},\tilde{\mathbf{z}}_{2}^{\top}]^{\top}\in \mathbb{R}^{K}$.
It is easily seen that rank$(A(X,Y,Z))={\rm rank}(\left[
\Phi(\mathbf{x}_1,\mathbf{y}_1,\mathbf{z}_1), \cdots, \Phi(\mathbf{x}_R,%
\mathbf{y}_R,\mathbf{z}_R)%
\right]).$
Let $h(X,Y,Z)$ be the determinant of leading $R\times R$ submatrix of $A(X,Y,Z)$. From Lemma \ref{lem2.1}, it suffices to show that $h(X,Y,Z)$ is nonzero. Let
\begin{align*}
\widehat{Z}&=\left[\begin{array}{c|c|c}\mathbb{I}_K \otimes \mathbf{1}^{\top}_{J-1}\otimes \mathbf{1}^{\top}_{I-1}&\begin{array}{c}\mathbf{0}^{\top}\\\hline
\mathbb{I}_{K-1} \otimes \mathbf{1}^{\top}_{J-1}\end{array}&\begin{array}{c}\mathbf{0}^{\top}\\\hline
\mathbb{I}_{K-1} \otimes \mathbf{1}^{\top}_{I-1}\end{array}\end{array}\right]\in \mathbb{R}^{K\times \ell}\\
\widehat{Y}&=\left[\begin{array}{c|c|c}
\begin{array}{c}
\mathbf{0}^{\top}\\\hline \mathbf{1}^{\top}_{K} \otimes \mathbb{I}_{J-1}\otimes \mathbf{1}^{\top}_{I-1}
\end{array}&\begin{array}{c}\mathbf{0}^{\top}\\\hline \mathbf{1}^{\top}_{K-1} \otimes \mathbb{I}_{J-1}\end{array}
& \begin{array}{c}\mathbf{1}^{\top}\\\hline 0_{(J-1)\times (K-1)(I-1)}\end{array}
\end{array}
\right]\in \mathbb{R}^{J\times \ell}\\
\widehat{X}&=\left[\begin{array}{c|c|c}
\begin{array}{c}
\mathbf{0}^{\top}\\\hline  \mathbf{1}^{\top}_{K} \otimes \mathbf{1}^{\top}_{J-1}\otimes \mathbb{I}_{I-1}
\end{array}&\begin{array}{c}\mathbf{1}^{\top}\\\hline 0_{(I-1)\times (K-1)(J-1)}\end{array}
& \begin{array}{c}\mathbf{0}^{\top}\\\hline \mathbf{1}^{\top}_{K-1} \otimes \mathbb{I}_{I-1}\end{array}
\end{array}
\right]\in \mathbb{R}^{I\times \ell},
\end{align*}
where $\ell=K(J-1)(I-1)+(K-1)(J-1)+(K-1)(I-1)=IJK-I-J-K+2$. From \eqref{eq2.1}, we have $A(\widehat{X},\widehat{Y},\widehat{Z})=\mathbb{I}_{K(J-1)(I-1)}\oplus \mathbb{I}_{(K-1)(J-1)}\oplus \mathbb{I}_{(K-1)(I-1)}$. Let   $X=\widehat{X}(:,1:R)$, $Y=\widehat{Y}(:,1:R)$ and $Z=\widehat{Z}(:,1:R)$. Then $A(X,Y,Z)=[\mathbb{I}_R,0]^{\top}\in \mathbb{R}^{\ell\times R}$ and hence, $h(X,Y,Z)$ is nonzero.
\end{proof}

\begin{Definition}
\label{def1.3} The Frobenius inner product in the vector space $\mathbb{R}%
^{I_1\times I_2\times \cdots \times I_N}$ is defined by
\begin{align*}
\langle\mathcal{A},\mathcal{B}\rangle=\sum_{i_1=1}^{I_1}\sum_{i_2=1}^{I_2}\cdots
\sum_{i_N=1}^{I_N}a_{i_1,i_2,\cdots,i_N}b_{i_1,i_2,\cdots,i_N},
\end{align*}
where $\mathcal{A}=[a_{i_1,i_2,\cdots,i_N}], \mathcal{B}=[b_{i_1,i_2,%
\cdots,i_N}]\in \mathbb{R}^{I_1\times I_2\times \cdots \times I_N}$. The
Frobenius norm of a tensor matrix $\mathcal{A}$ is defined by $\|\mathcal{A}\|_F=\sqrt{\langle\mathcal{A},\mathcal{A}\rangle}$.
\end{Definition}

\section{The third-order tensor}

\label{sec3}

\subsection{Generic uniqueness condition for CP decomposition}

\label{sec3.1}

Consider a tensor $\mathcal{A}\in \mathbb{R}^{I\times J\times K}$ of which
the CP decomposition is given by
\begin{align}  \label{3tensor}
\mathcal{A}=\sum_{r=1}^R\mathbf{x}_r\circ\mathbf{y}_r\circ\mathbf{z}%
_r,
\end{align}
where $R=\mathrm{rank}(\mathcal{A})\leqslant K$%
, and for each $r\in \{1,2,\cdots R\}$, $\mathbf{x}_{r},\mathbf{y}_{r},%
\mathbf{z}_{r}$ are generic vectors. Let $X=[\mathbf{x}_1,\mathbf{x}_2,\cdots,\mathbf{x}_R]\in \mathbb{R}^{I\times R}$, $Y=[\mathbf{y}_1,\mathbf{y}_2,\cdots,\mathbf{y}_R]\in \mathbb{R}^{J\times
R}$ and $Z=[\mathbf{z}_1,\mathbf{z}_2,\cdots,\mathbf{z}_R]\in \mathbb{R}^{K\times R}$ be the factor matrices of $\mathcal{A}$, then $X$, $Y$ and $Z$ are generic.
%\begin{align*}
%\begin{array}{cc}
%X=[\mathbf{x}_1,\mathbf{x}_2,\cdots,\mathbf{x}_R]\in \mathbb{R}^{I\times R},
%& Y=[\mathbf{y}_1,\mathbf{y}_2,\cdots,\mathbf{y}_R]\in \mathbb{R}^{J\times
%R}, \\
%Z=[\mathbf{z}_1,\mathbf{z}_2,\cdots,\mathbf{z}_R]\in \mathbb{R}^{K\times R}.
%&%
%\end{array}%
%\end{align*}
It is easily seen that for each $k\in \{1,2,\cdots ,K\}$, the $k$th frontal
slide of the tensor $\mathcal{A}$ in \eqref{3tensor} is
\begin{align*}
\mathcal{A}_{::k}=X( D_{Z,k}) Y^{\top},
\end{align*}
where $D_{Z,k}=\mathrm{diag}(Z(k,:))$ and $Z(k,:)$ is the $k$th row of
matrix $Z$. Let
\begin{align}  \label{T}
T=[\mathrm{vec}(\mathcal{A}_{::1}),\mathrm{vec}(\mathcal{A}_{::2}),\cdots ,%
\mathrm{vec}(\mathcal{A}_{::K})].
\end{align}
Conventionally $T$ is called the matricization of tensor $\mathcal{A}$.
Since the matrix $D_{Z,k}$ is diagonal, it follows from \eqref{eq2.1}
that
\begin{align}  \label{vectansor}
T=(Y\odot X) Z^{\top }\in \mathbb{R}^{IJ\times K}.
\end{align}

%Since $\mathcal{A}\in \mathbb{R}^{I\times
%J\times K}$, it follows from \eqref{Rstar} that  the critical number
%\begin{align}  \label{eq3.2}
%R^*=(I-1)(J-1)+1.
%\end{align}
%
%
%In the following, we make an assumption on the rank of tensor $\mathcal{A}$.
%Assume
%\begin{align}  \label{assumption}
%R\leqslant R^*,
%\end{align}
From the definition of rank of tensor, it is easily seen that $R\leqslant IJ$.
%From \cite{Kruskal:1989,Ten Berge:2000}, we obtain that $R\leqslant IJ$.
Because vectors, $\mathbf{x}_{r},\mathbf{y}_{r},\mathbf{z}_{r}$, in %
\eqref{3tensor} are generic, from Lemma \ref{lem2.2} $(i)$, we know
that if $R\leqslant K$, then the matrices $Y\odot X\in \mathbb{R}^{IJ\times
R}$ and $Z\in \mathbb{R}^{K\times R}$ are of full column rank. In this situation, the matricization $T$ in \eqref{T} can be factorized in the form
\begin{align}  \label{EFdec}
T=EF^{\top},
\end{align}
where $E\in \mathbb{R}^{IJ\times R}$ and $F\in \mathbb{R}^{K\times R}$ are
of full column rank. From \eqref{vectansor} and \eqref{EFdec}, we have
\begin{align}  \label{YX}
Y\odot X=EW,
\end{align}
for some nonsingular $W\in \mathbb{R}^{R\times R}$. Our goal is to find an
invertible matrix $W$ such that the columns of $EW$ are Kronecker products.
Now, we consider the following inverse problem

\begin{itemize}
\item \textbf{Problem:} Given a matrix $E\in \mathbb{R}^{IJ\times R}$ with
rank$(E)=R$. % where
%$E=VW\in \mathbb{R}^{IJ\times R}$$V\in \mathbb{R}^{IJ\times R}$ is of the form in \eqref{eq1.3}, $W\in \mathbb{R\times R}$ is an invertible matrix and%$R\leqslant R^*$ and $R^*$ is given in \eqref{eq3.2}.
Find two matrices $X\in \mathbb{R}^{I\times R}$ and $Y\in \mathbb{R}^{J\times R}$ with unit columns and an invertible matrix $W\in \mathbb{R}^{R \times R}$ such
that $Y\odot X=EW$.
\end{itemize}

\begin{Remark}
\label{rem3.1} Let $\mathcal{A}\in \mathbb{R}^{I\times J\times K}$ have form
in \eqref{3tensor} with $R\leqslant K$ and let $T\in \mathbb{R}^{IJ\times K}$
be defined in \eqref{T}. Let $E\in \mathbb{R}^{IJ\times R}$ and $F\in
\mathbb{R}^{K\times R}$ be of full column rank such that \eqref{EFdec}
holds. Suppose that $(\widetilde{X},\widetilde{Y},\widetilde{W})\in \mathbb{R%
}^{I\times R}\times \mathbb{R}^{J\times R}\times \mathbb{R}^{R\times R}$ is
the solution of this inverse problem. Let $\widetilde{Z}=F\widetilde{W}^{-\top }$. %Then there exist $\widetilde{Z}\in \mathbb{R}^{K\times R}$ and $\widetilde{\Lambda }\in \mathbb{R}^{R\times R}$ such that $\widetilde{Z}\widetilde{%
%\Lambda }=F\widetilde{W}^{-\top }$, where the columns of $\widetilde{Z}$ are
%unit and $\widetilde{\Lambda }$ is diagonal.
It is easily seen that the
equation $T=(\widetilde{Y}\odot \widetilde{X})\widetilde{%
Z}^{\top }$ holds and the tensor $\mathcal{A}$ has a CP decomposition
%\begin{equation*}
$\mathcal{A}=\sum_{r=1}^{R}\tilde{\mathbf{x}}_{r}\circ
\tilde{\mathbf{y}}_{r}\circ \tilde{\mathbf{z}}_{r},$
%\end{equation*}%
where $\widetilde{X}=[\tilde{\mathbf{x}}_{1},\cdots ,\tilde{\mathbf{x}}_{R}]$%
, $\widetilde{Y}=[\tilde{\mathbf{y}}_{1},\cdots ,\tilde{\mathbf{y}}_{R}]$ and $%
\widetilde{Z}=[\tilde{\mathbf{z}}_{1},\cdots ,\tilde{\mathbf{z}}_{R}]$. Note that if the inverse problem has a unique solution up to the permutation and scalar of columns of $\widetilde{X}$ and $\widetilde{Y}$, then
the CP decomposition of tensor $\mathcal{A}$ is unique.
\end{Remark}

Since $E\in \mathbb{R}^{IJ\times R}$ is of full column
rank, the dimension of null space of $E^{\top}$, $\mathcal{N}(E^{\top})=\{%
\mathbf{u}\in \mathbb{R}^{IJ}|E^{\top}\mathbf{u}=0\}$,
%\begin{align}  \label{NE}
%\mathcal{N}(E^{\top})=\{\mathbf{u}\in \mathbb{R}^{IJ}|E^{\top}\mathbf{u}=0\},
%\end{align}
is $IJ-R$. The following lemma is useful to solve the inverse problem.

\begin{Lemma}
\label{lem3.1} Suppose that $\tilde{\mathbf{x}}\in \mathbb{R}^I$ and $\tilde{%
\mathbf{y}}\in \mathbb{R}^J$ such that $\tilde{\mathbf{y}}\otimes \tilde{%
\mathbf{x}}$ is a vector in the column space of $E\in \mathbb{R}^{IJ\times
R} $, i.e., there exists $\tilde{\mathbf{w}}\in \mathbb{R}^R$ such that $%
\tilde{\mathbf{y}}\otimes \tilde{\mathbf{x}}=E\tilde{\mathbf{w}}$. Then for
each nonzero vector $\mathbf{u}\in \mathcal{N}(E^{\top})$, let $\mathsf{U}=%
\mathrm{vec}^{-1}(\mathbf{u})\in \mathbb{R}^{I\times J}$ (i.e., \rm{vec}$(\mathsf{%
U})=\mathbf{u}$). $(\tilde{\mathbf{x}},\tilde{\mathbf{y}})$ is a solution
of the quadratic equation $\mathbf{x}^{\top}\mathsf{U}\mathbf{y}=0$.
\end{Lemma}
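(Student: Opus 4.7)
The plan is to prove Lemma 3.1 by a direct computation that exploits the duality between the Kronecker-product structure of $\tilde{\mathbf y}\otimes\tilde{\mathbf x}$ and the matricized form of $\mathbf u$. The statement $\mathbf x^\top \mathsf U\mathbf y=0$ is, up to reshaping, just the orthogonality of $\mathbf u$ to the column space of $E$, so the entire proof reduces to one identity in $\mathrm{vec}$/Kronecker algebra.

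First, I would observe that since $\mathbf u\in\mathcal N(E^\top)$, we have $\mathbf u^\top E=\mathbf 0^\top$. Applying this to the right-hand side of $\tilde{\mathbf y}\otimes\tilde{\mathbf x}=E\tilde{\mathbf w}$ gives
\begin{equation*}
\mathbf u^\top(\tilde{\mathbf y}\otimes\tilde{\mathbf x})=\mathbf u^\top E\tilde{\mathbf w}=0.
\end{equation*}
Next, I would rewrite the Kronecker product as a vectorization: using the identity $\mathrm{vec}(AXB)=(B^\top\otimes A)\,\mathrm{vec}(X)$ from \eqref{eq2.1} with $A=\tilde{\mathbf x}$, $X=1$, and $B=\tilde{\mathbf y}^\top$, one obtains $\tilde{\mathbf y}\otimes\tilde{\mathbf x}=\mathrm{vec}(\tilde{\mathbf x}\tilde{\mathbf y}^\top)$. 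Since $\mathbf u=\mathrm{vec}(\mathsf U)$ by definition of $\mathsf U$, the scalar $\mathbf u^\top(\tilde{\mathbf y}\otimes\tilde{\mathbf x})$ equals the Frobenius inner product $\langle \mathsf U,\tilde{\mathbf x}\tilde{\mathbf y}^\top\rangle$, which in turn equals $\mathrm{tr}(\mathsf U^\top\tilde{\mathbf x}\tilde{\mathbf y}^\top)=\tilde{\mathbf y}^\top\mathsf U^\top\tilde{\mathbf x}=\tilde{\mathbf x}^\top\mathsf U\tilde{\mathbf y}$ (the last equality because a scalar equals its transpose).

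Combining the two displays yields $\tilde{\mathbf x}^\top\mathsf U\tilde{\mathbf y}=0$, which is the desired conclusion. There is no real obstacle here: the only thing to be careful about is the convention for $\mathrm{vec}$ and the consistent use of it to identify $\mathbb R^{IJ}$ with $\mathbb R^{I\times J}$, so that the Kronecker identity $\tilde{\mathbf y}\otimes\tilde{\mathbf x}=\mathrm{vec}(\tilde{\mathbf x}\tilde{\mathbf y}^\top)$ is applied with the correct ordering of factors (note the swap between $\tilde{\mathbf x},\tilde{\mathbf y}$ on one side and their roles on the other). Once this bookkeeping is in place, the whole argument is two lines.
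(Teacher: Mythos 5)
Your proof is correct and takes essentially the same route as the paper: both reduce the claim to the identity $\tilde{\mathbf x}^{\top}\mathsf U\tilde{\mathbf y}=(\tilde{\mathbf y}\otimes\tilde{\mathbf x})^{\top}\mathrm{vec}(\mathsf U)$ and then invoke $\mathbf u\in\mathcal N(E^{\top})$. The paper applies the $\mathrm{vec}$/Kronecker identity directly, while you detour through $\tilde{\mathbf y}\otimes\tilde{\mathbf x}=\mathrm{vec}(\tilde{\mathbf x}\tilde{\mathbf y}^{\top})$ and a trace/Frobenius-inner-product argument; this is just a different phrasing of the same two-line computation.
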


\begin{proof}
Suppose that  $\tilde{\mathbf{x}}\in \mathbb{R}^I$ and $\tilde{\mathbf{y}}\in \mathbb{R}^J$ such that $\tilde{\mathbf{y}}\otimes \tilde{\mathbf{x}}=E\tilde{\mathbf{w}}$ for some vector  $\tilde{\mathbf{w}}\in \mathbb{R}^R$. Since  $\mathbf{u}\in \mathcal{N}(E^{\top})$, $(\tilde{\mathbf{y}}\otimes \tilde{\mathbf{x}})^{\top}\mathbf{u}=0$. Hence, we have
\begin{align*}
\mathbf{ \tilde{x}}^{\top}\mathsf{U}\mathbf{ \tilde{y}}=(\tilde{\mathbf{y}}^{\top} \otimes \tilde{\mathbf{x}}^{\top}) {\rm vec}(\mathsf{U})=(\tilde{\mathbf{y}}\otimes \tilde{\mathbf{x}})^{\top}\mathbf{u}=0.
\end{align*}
So, $(\tilde{\mathbf{ x}},\tilde{\mathbf{ y}})$ is a solution of the quadratic equation
$\mathbf{ x}^{\top}\mathsf{U}\mathbf{ y}=0$.
\end{proof}

Let $\{\mathbf{u}_{1},\mathbf{u}_{2},\cdots ,\mathbf{u}_{IJ-R}\}$ be a
basis of $\mathcal{N}(E^{\top })$ and let $\mathsf{U}_{i}=\mathrm{vec}^{-1}(%
\mathbf{u}_{i})\in \mathbb{R}^{I\times J}$, for $i=1,2,\ldots ,IJ-R$.
Consider the system of polynomial equations
\begin{align}  \label{eq3.5}
\left\{
\begin{array}{c}
\mathbf{x}^{\top }\mathsf{U}_{1}\mathbf{y}=0, \\
\vdots \\
\mathbf{x}^{\top }\mathsf{U}_{IJ-R}\mathbf{y}=0,%
\end{array}%
\right.
\end{align}%
where $\mathbf{x}\in \mathbb{R}^{I}$, $\mathbf{y}\in \mathbb{R}^{J}$ are
unknowns.
% and $c_1,d_1\in \mathbb{R}^I$, $c_2,d_2\in \mathbb{R}^J$ are randomly generated.
Then we have the following result.

\begin{Theorem}
\label{thm3.1} Let $E\in \mathbb{R}^{IJ\times R}$, $\tilde{\mathbf{x}}\in
\mathbb{R}^I$ and $\tilde{\mathbf{y}}\in \mathbb{R}^J$. Then $\tilde{\mathbf{%
y}}\otimes \tilde{\mathbf{x}}$ belongs to the column space of $E$ if and
only if $(\tilde{\mathbf{x}},\tilde{\mathbf{y}})$ is a solution of system %
\eqref{eq3.5}.
\end{Theorem}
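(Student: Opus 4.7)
The statement is an if-and-only-if whose forward direction is essentially already given by Lemma \ref{lem3.1}, and whose converse is a clean application of the fundamental theorem of linear algebra. The plan is to pass everything through the identity
\[
\mathbf{x}^{\top}\mathsf{U}\mathbf{y} = (\mathbf{y}^{\top}\otimes \mathbf{x}^{\top})\,\mathrm{vec}(\mathsf{U}) = (\mathbf{y}\otimes \mathbf{x})^{\top}\mathbf{u},
\]
where $\mathsf{U}=\mathrm{vec}^{-1}(\mathbf{u})$, so that the quadratic equations in \eqref{eq3.5} become linear orthogonality conditions between $\tilde{\mathbf{y}}\otimes\tilde{\mathbf{x}}$ and the basis $\{\mathbf{u}_1,\ldots,\mathbf{u}_{IJ-R}\}$ of $\mathcal{N}(E^{\top})$.

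For the $(\Rightarrow)$ direction I would simply invoke Lemma \ref{lem3.1} applied to each basis vector $\mathbf{u}_i$, $i=1,\ldots,IJ-R$: each yields $\tilde{\mathbf{x}}^{\top}\mathsf{U}_i\tilde{\mathbf{y}}=0$, so $(\tilde{\mathbf{x}},\tilde{\mathbf{y}})$ satisfies \eqref{eq3.5}.

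For the $(\Leftarrow)$ direction I would assume $(\tilde{\mathbf{x}},\tilde{\mathbf{y}})$ solves \eqref{eq3.5} and, using the identity above, translate the equations into $(\tilde{\mathbf{y}}\otimes \tilde{\mathbf{x}})^{\top}\mathbf{u}_i=0$ for every $i$. Since the $\mathbf{u}_i$ span $\mathcal{N}(E^{\top})$, linearity gives $(\tilde{\mathbf{y}}\otimes\tilde{\mathbf{x}})^{\top}\mathbf{u}=0$ for all $\mathbf{u}\in\mathcal{N}(E^{\top})$, i.e.\ $\tilde{\mathbf{y}}\otimes\tilde{\mathbf{x}} \in \mathcal{N}(E^{\top})^{\perp}$. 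By the standard orthogonal decomposition $\mathbb{R}^{IJ}=\mathrm{Range}(E)\oplus \mathcal{N}(E^{\top})$, this is exactly the statement that $\tilde{\mathbf{y}}\otimes\tilde{\mathbf{x}}$ lies in the column space of $E$.

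There is no real obstacle here: the proof is essentially a one-line computation combined with the orthogonal-complement identity. The only thing to be a little careful about is that the set of equations in \eqref{eq3.5} uses a particular basis of $\mathcal{N}(E^{\top})$, so one must note that satisfying the $IJ-R$ basis equations implies satisfying them for every $\mathbf{u}\in\mathcal{N}(E^{\top})$ by linear combination; this justifies passing from the chosen basis to the whole null space.
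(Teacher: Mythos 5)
Your proposal is correct and follows essentially the same route as the paper: necessity via Lemma \ref{lem3.1} applied to each basis vector, and sufficiency by converting the quadratic equations to orthogonality against the basis of $\mathcal{N}(E^{\top})$ and invoking $\mathcal{N}(E^{\top})^{\perp}=\mathrm{Range}(E)$. The paper leaves this last orthogonal-complement step implicit, while you state it explicitly, but the argument is identical.
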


\begin{proof}
(Necessity.) Suppose that $\tilde{\mathbf{y}}\otimes \tilde{\mathbf{x}}$ belongs to the column space of $E$. From Lemma \ref{lem3.1}, we obtain that $(\tilde{\mathbf{x}},\tilde{\mathbf{y}})$ is a solution of system \eqref{eq3.5}.

(Sufficiency.) Suppose that  $(\tilde{\mathbf{x}},\tilde{\mathbf{y}})$ is a solution of \eqref{eq3.5}. Then we have
\begin{align*}
(\tilde{\mathbf{y}}\otimes \tilde{\mathbf{x}})^{\top}\mathbf{u}_1=(\tilde{\mathbf{y}}\otimes \tilde{\mathbf{x}})^{\top}\mathbf{u}_2=\cdots= (\tilde{\mathbf{y}}\otimes \tilde{\mathbf{x}})^{\top}\mathbf{u}_{IJ-R}=0.
\end{align*}
Since $\{\mathbf{ u}_1,\mathbf{ u}_2,\cdots, \mathbf{ u}_{IJ-R}\}$ is a basis of $\mathcal{N}(E^{\top})$, $\tilde{\mathbf{y}}\otimes \tilde{\mathbf{x}}$ belongs to the column space of $E$.
\end{proof}

\begin{Corollary}
\label{cor3.2} Suppose that the system \eqref{eq3.5} has $R$ real solutions $\left\{ (\mathbf{x}_{r},\mathbf{y}_{r})\right\} _{r=1}^{R}$ such that $Y\odot X$ is of full column rank, where $X=[\mathbf{x}_{1},\cdots ,\mathbf{x}%
_{R}]$ and $Y=[\mathbf{y}_{1},\cdots ,\mathbf{y}_{R}]$. Then the inverse
problem is solvable.
\end{Corollary}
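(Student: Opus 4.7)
The plan is to leverage Theorem \ref{thm3.1} columnwise on the given $R$ solutions, assemble the coefficient vectors into a matrix $W$, and then verify the two remaining properties required by the inverse problem: invertibility of $W$ and unit-norm columns for $X$ and $Y$.

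First I would invoke Theorem \ref{thm3.1} on each solution $(\mathbf{x}_r,\mathbf{y}_r)$ of \eqref{eq3.5}. This yields a vector $\mathbf{w}_r\in\mathbb{R}^R$ such that $\mathbf{y}_r\otimes\mathbf{x}_r=E\mathbf{w}_r$ for every $1\leqslant r\leqslant R$. Setting $W=[\mathbf{w}_1,\ldots,\mathbf{w}_R]\in\mathbb{R}^{R\times R}$, concatenation of the $R$ identities gives the matrix equality $Y\odot X=EW$, which is exactly the relation that the inverse problem requires.

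Next I would check that $W$ is invertible. By hypothesis $Y\odot X$ has full column rank $R$, and by the assumption of the inverse problem so does $E$. From $Y\odot X=EW$ it follows that $\mathrm{rank}(EW)=R$, and since $E$ has $R$ linearly independent columns, $W$ cannot drop rank; hence $W$ is nonsingular.

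Finally I would handle the unit-column normalization. For each $r$, replace $(\mathbf{x}_r,\mathbf{y}_r)$ by $(\mathbf{x}_r/\|\mathbf{x}_r\|,\mathbf{y}_r/\|\mathbf{y}_r\|)$ and absorb the scalar $\lambda_r=1/(\|\mathbf{x}_r\|\,\|\mathbf{y}_r\|)$ into the corresponding column of $W$, i.e., set $\mathbf{w}_r'=\lambda_r\mathbf{w}_r$. Because $(\alpha\mathbf{y}_r)\otimes(\beta\mathbf{x}_r)=\alpha\beta\,(\mathbf{y}_r\otimes\mathbf{x}_r)$, the identity $Y'\odot X'=EW'$ is preserved, and $W'=W\,\mathrm{diag}(\lambda_1,\ldots,\lambda_R)$ remains invertible since each $\lambda_r\neq 0$. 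There is no real obstacle here: the work is essentially packaging Theorem \ref{thm3.1} into matrix form and performing a column rescaling; the only point worth care is ensuring that the rescaling does not destroy invertibility of $W$, which is immediate from the diagonal form of the adjustment.
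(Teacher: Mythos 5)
Your proof is correct and follows essentially the same route as the paper: apply Theorem \ref{thm3.1} columnwise to obtain $Y\odot X=EW$, observe $W$ is invertible since $E$ and $Y\odot X$ both have full column rank, and absorb the unit-normalization into a diagonal rescaling of $W$. The only cosmetic difference is that the paper writes the rescaling as $\widetilde{W}=W(D_y\odot D_x)$, whereas you write it as $W\,\mathrm{diag}(\lambda_1,\ldots,\lambda_R)$; these are the same matrix.
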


\begin{proof}
Since $(%
\mathbf{x}_r,\mathbf{y}_r)$ for $r\in \{1,2,\ldots,R\}$ are solutions of system \eqref{eq3.5} and the matrix $%
Y\odot X$ is of full column rank, it follows from Theorem \ref{thm3.1} that there exists an invertible matrix $W$ such that $Y\odot X=EW$.
Let $D_x$ and $D_y$ be diagonal matrices such that the columns of $\widetilde{X}=XD_x$ and $\widetilde{Y}=YD_y$ are unit. Then $\widetilde{Y}\odot \widetilde{X}=E\widetilde{W}$, where $\widetilde{W}=W(D_y\odot D_x)$. Hence, the inverse problem is solvable.
\end{proof}

%\begin{Theorem}\label{thm2.1}
%Let  $X\in \mathbb{R}^{I\times R}$, $Y\in \mathbb{R}^{J\times R}$ in \eqref{eq3.1} be the solution of the inverse problem. Then for each $i\in \{1,2,\ldots,R\}$, $(x_i,y_i)$ is a solution of system \eqref{eq3.5}.
%\end{Theorem}
%\begin{proof}
%Suppose that $X\in \mathbb{R}^{I\times R}$, $Y\in \mathbb{R}^{J\times R}$ is the solution of the inverse problem, then for each $i\in \{1,2,\ldots,R\}$, $x_i\otimes y_i$ is in the column space of $E$. Hence, we have $U^{\top}(x_i\otimes y_i)=0$, where $U$ is defined in \eqref{eq3.4}. Since vec$(\mathsf{U}_j)=u_j$, for $j=1,2,\cdots,\check{R}$, it is easily seen that
%\begin{align*}
%y_i^{\top}\mathsf{U}_jx_i=u_j^{\top}(x_i\otimes y_i)=0.
%\end{align*}
%So, $(x_i,y_i)$ is a solution of system \eqref{eq3.5}.
%\end{proof}

The solutions of system \eqref{eq3.5} can be used to factorize the tensor $%
\mathcal{A}$. If $(\mathbf{x},\mathbf{y})$ is a nonzero solution of \eqref{eq3.5} then so is $(\alpha
\mathbf{x},\beta \mathbf{y})$ for each $\alpha ,\beta \in \mathbb{R}$. We consider the system
\begin{align}  \label{eq3.6}
\left\{
\begin{array}{c}
\mathbf{x}^{\top }\mathsf{U}_{1}\mathbf{y}=0, \\
\vdots \\
\mathbf{x}^{\top }\mathsf{U}_{IJ-R}\mathbf{y}=0, \\
\mathbf{c}_{x}^{\top }\mathbf{x}=1, \\
\mathbf{c}_{y}^{\top }\mathbf{y}=1,%
\end{array}%
\right.
\end{align}%
where $\mathbf{c}_{x}\in \mathbb{R}^{I}$, $\mathbf{c}_{y}\in \mathbb{R}^{J}$
are randomly generated. It follows from Theorem \ref{thm3.1} that if $%
\mathbf{y}\otimes \mathbf{x}$ is in the column space of the matrix $E$, then
$(\alpha \mathbf{x},\beta \mathbf{y})$ is a solution of \eqref{eq3.6}, where
$\alpha =1/(\mathbf{c}_{x}^{\top }\mathbf{x})$ and $\beta =1/(\mathbf{c}%
_{y}^{\top }\mathbf{y})$. %satisfies the linear system
%\begin{align}\label{eq3.7}
%\left\{
%\begin{array}{c}
%(c_1^{\top}x)\alpha+(c_2^{\top}y)\beta=1,\\
%(d_1^{\top}x)\alpha+(d_2^{\top}y)\beta=1.\\
%\end{array}
%\right.
%\end{align}

The system \eqref{eq3.6} has $IJ-R+2$ polynomial equations in $%
I+J$ unknowns $(\mathbf{x},\mathbf{y})$, where $R=\mathrm{rank}(\mathcal{A})$.
Define the critical number
\begin{align}  \label{eq3.2}
R^*=IJ-I-J+2.
\end{align}
If $R=R^{\ast }$ then \eqref{eq3.6} has $I+J$ polynomial equations in $I+J$ unknowns.
Furthermore, if $R<R^{\ast }$ (or $R>R^{\ast }$), then the system %
\eqref{eq3.6} is an overdetermined (or an underdetermined) system. Note
that the critical number $R^{\ast }$ is the same number defined in %
\eqref{Rstar} when we consider an ($I\times J\times K$) tensor.

First, we consider the case $R\leqslant R^{\ast }$. The system \eqref{eq3.6}
has more equations than unknowns if $R<R^{\ast }$. We consider the system $P(%
\mathbf{x},\mathbf{y})=\mathbf{0}$, where
\begin{align}  \label{eq3.8}
P(\mathbf{x},\mathbf{y})=\left[
\begin{array}{c}
p_{1}(\mathbf{x},\mathbf{y}) \\
\vdots \\
p_{I+J-2}(\mathbf{x},\mathbf{y}) \\
p_{I+J-1}(\mathbf{x},\mathbf{y}) \\
p_{I+J}(\mathbf{x},\mathbf{y})%
\end{array}%
\right] \equiv \left[
\begin{array}{c}
\mathbf{x}^{\top }\mathsf{U}_{1}\mathbf{y} \\
\vdots \\
\mathbf{x}^{\top }\mathsf{U}_{I+J-2}\mathbf{y} \\
\mathbf{c}_{x}^{\top }\mathbf{x}-1 \\
\mathbf{c}_{y}^{\top }\mathbf{y}-1%
\end{array}%
\right] .
\end{align}
Then the system $P(\mathbf{x},\mathbf{y})=\mathbf{0}$ has $I+J$ polynomial equations
in $I+J$ unknowns. If $R<R^{\ast }$, the system $P(\mathbf{x},\mathbf{y})=\mathbf{0}$
is the system resulting from removing $R^{\ast }-R$ equations,
\begin{align}\label{drop}
q_{i}(\mathbf{x},\mathbf{y})=\mathbf{x}^{\top }\mathsf{U}_{i}\mathbf{y}=0,%
\text{ for }i=I+J-1,\cdots ,IJ-R,
\end{align}
from system \eqref{eq3.6}. Note that the system of polynomials $P(\mathbf{x},%
\mathbf{y})$ in \eqref{eq3.8} is governed by the coefficient matrices, $%
\mathsf{U}_{1},\mathsf{U}_{2},\cdots ,\mathsf{U}_{I+J-2}$, in which $\{%
\mathrm{vec}(\mathsf{U}_{1}),\mathrm{vec}(\mathsf{U}_{1}),\cdots ,\mathrm{vec%
}(\mathsf{U}_{I+J-2})\}$ is arbitrary linearly independent set of nullspace $%
\mathcal{N}(E^{\top })$. The following lemma is straightforward and we omit
the proof.

\begin{Lemma} \label{lem3.2}
Let $P(\mathbf{x},\mathbf{y})$ be given in \eqref{eq3.8}
and $Q_x\in \mathbb{R}^{I\times I}$, $Q_y\in \mathbb{R}^{J\times J}$ be
invertible. Then $(\tilde{\mathbf{x}},\tilde{\mathbf{y}})$ is an isolated
solution of $P(\mathbf{x},\mathbf{y})=\mathbf{0}$ if and only if $(Q_x\tilde{\mathbf{x%
}},Q_y\tilde{\mathbf{y}})$ is an isolated solution of $\widetilde{P}(\mathbf{%
x},\mathbf{y})=0$, where $\widetilde{P}(\mathbf{x},\mathbf{y})$ has the form %
\eqref{eq3.8} in which $\mathsf{U}_j$, $\mathbf{c}_x$ and $\mathbf{c}_y$ are
replaced by $Q_x^{-\top}\mathsf{U}_jQ_y^{-1}$, $Q_x^{-\top}\mathbf{c}_x$ and
$Q_y^{-\top}\mathbf{c}_y$, respectively, for $j\in \{1,\cdots,I+J-2\}$.
\end{Lemma}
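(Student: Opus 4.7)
The plan is to exhibit an explicit invertible linear change of variables relating the zero sets of $P$ and $\widetilde{P}$ and to check that the two polynomial systems agree under this change. Concretely, I will set $\mathbf{x}' = Q_x\mathbf{x}$ and $\mathbf{y}' = Q_y\mathbf{y}$, so that $(\mathbf{x},\mathbf{y})\mapsto(\mathbf{x}',\mathbf{y}')$ is a linear automorphism of $\mathbb{R}^{I+J}$ (hence a homeomorphism of $\mathbb{R}^I\times\mathbb{R}^J$). The bulk of the argument is the identity
\begin{equation*}
\widetilde{P}(Q_x\mathbf{x},Q_y\mathbf{y}) \;=\; P(\mathbf{x},\mathbf{y}),
\end{equation*}
which will immediately imply that the zero set of $\widetilde{P}$ is the image of the zero set of $P$ under $(\mathbf{x},\mathbf{y})\mapsto(Q_x\mathbf{x},Q_y\mathbf{y})$, and isolatedness is preserved by homeomorphisms.

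The first step is to verify the identity componentwise. For each $j\in\{1,\ldots,I+J-2\}$, the corresponding component of $\widetilde{P}$ evaluated at $(Q_x\mathbf{x},Q_y\mathbf{y})$ is
\begin{equation*}
(Q_x\mathbf{x})^{\top}(Q_x^{-\top}\mathsf{U}_j Q_y^{-1})(Q_y\mathbf{y})
=\mathbf{x}^{\top}(Q_x^{\top}Q_x^{-\top})\mathsf{U}_j(Q_y^{-1}Q_y)\mathbf{y}
=\mathbf{x}^{\top}\mathsf{U}_j\mathbf{y},
\end{equation*}
which is the $j$th component of $P(\mathbf{x},\mathbf{y})$. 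Similarly, the last two components give $(Q_x^{-\top}\mathbf{c}_x)^{\top}(Q_x\mathbf{x})-1 = \mathbf{c}_x^{\top}\mathbf{x}-1$ and $(Q_y^{-\top}\mathbf{c}_y)^{\top}(Q_y\mathbf{y})-1 = \mathbf{c}_y^{\top}\mathbf{y}-1$. Thus $\widetilde{P}\circ\Phi = P$, where $\Phi(\mathbf{x},\mathbf{y})=(Q_x\mathbf{x},Q_y\mathbf{y})$.

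The second step is to transfer isolatedness. Since $Q_x$ and $Q_y$ are invertible, $\Phi$ is a linear bijection of $\mathbb{R}^I\times\mathbb{R}^J$ and in particular a homeomorphism, with inverse $\Phi^{-1}(\mathbf{x}',\mathbf{y}')=(Q_x^{-1}\mathbf{x}',Q_y^{-1}\mathbf{y}')$. If $(\tilde{\mathbf{x}},\tilde{\mathbf{y}})$ is an isolated zero of $P$, i.e.\ it admits an open neighborhood $V$ in which it is the only zero of $P$, then $\Phi(V)$ is an open neighborhood of $(Q_x\tilde{\mathbf{x}},Q_y\tilde{\mathbf{y}})$, and any zero of $\widetilde{P}$ in $\Phi(V)$ corresponds via $\Phi^{-1}$ to a zero of $P$ in $V$, which must be $(\tilde{\mathbf{x}},\tilde{\mathbf{y}})$. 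The converse follows by swapping the roles of $P$ and $\widetilde{P}$ and using $(Q_x^{-1},Q_y^{-1})$.

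There is no real obstacle here; the statement is a routine change-of-variables observation, which is presumably why the authors omit the proof. The only mild care is to notice that $\widetilde{P}$ is still of the form \eqref{eq3.8}, i.e.\ that $\{\mathrm{vec}(Q_x^{-\top}\mathsf{U}_j Q_y^{-1})\}_{j=1}^{I+J-2}$ is still a linearly independent set (which follows because $\mathrm{vec}(Q_x^{-\top}\mathsf{U}_j Q_y^{-1})=(Q_y^{-\top}\otimes Q_x^{-\top})\mathrm{vec}(\mathsf{U}_j)$ and $Q_y^{-\top}\otimes Q_x^{-\top}$ is invertible), so the transformed system is itself a legitimate instance of the setup in \eqref{eq3.8} with the roles of the $\mathsf{U}_j$'s played by $Q_x^{-\top}\mathsf{U}_jQ_y^{-1}$.
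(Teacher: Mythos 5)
Your proof is correct, and it is the natural change-of-variables argument that the authors evidently had in mind when they wrote ``the following lemma is straightforward and we omit the proof.'' The key identity $\widetilde{P}(Q_x\mathbf{x},Q_y\mathbf{y})=P(\mathbf{x},\mathbf{y})$ is verified componentwise exactly as you do, the transfer of isolatedness via the linear homeomorphism $\Phi(\mathbf{x},\mathbf{y})=(Q_x\mathbf{x},Q_y\mathbf{y})$ is sound, and your closing remark that $\{\mathrm{vec}(Q_x^{-\top}\mathsf{U}_jQ_y^{-1})\}_j$ remains linearly independent (via $\mathrm{vec}(Q_x^{-\top}\mathsf{U}_jQ_y^{-1})=(Q_y^{-\top}\otimes Q_x^{-\top})\mathrm{vec}(\mathsf{U}_j)$) is the right detail to check so that $\widetilde{P}$ is genuinely of the form \eqref{eq3.8}; this also matches Remark \ref{rem3.2}, which records that $\mathrm{vec}(Q_x^{-\top}\mathsf{U}_jQ_y^{-1})$ spans $\mathcal{N}(\widehat{E}^{\top})$ for $\widehat{E}=(Q_y\otimes Q_x)E$.
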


\begin{Remark}
\label{rem3.2} Let $E\in \mathbb{R}^{IJ\times R}$ and  $\mathrm{vec}(\mathsf{U})\in \mathcal{N}%
(E^{\top })$. Suppose that $\mathbf{y}\otimes \mathbf{x}$ is in the column
space of $E$, where $\mathbf{x}\in \mathbb{R}^{I}$ and $\mathbf{y}\in
\mathbb{R}^{J}$, it follows from Lemma \ref{lem3.1} that $\mathbf{x}^{\top }%
\mathsf{U}\mathbf{y}=0$. Let $Q_{x}\in \mathbb{R}^{I\times I}$, $Q_{y}\in
\mathbb{R}^{J\times J}$ be invertible. Then  $\hat{%
\mathbf{y}}\otimes \hat{\mathbf{x}}\equiv (Q_{y}\mathbf{y})\otimes (Q_{x}%
\mathbf{x})$ is in the column space of $\widehat{E}\equiv (Q_{y}\otimes
Q_{x})E$ and $\hat{\mathbf{x}}^{\top }\widehat{\mathsf{U}}\hat{\mathbf{y}}=0$%
, where $\mathrm{vec}(\widehat{\mathsf{U}})$ is any vector in $\mathcal{N}(%
\widehat{E}^{\top })$. Therefore, for each $\widehat{\mathsf{U}}$ with $%
\mathrm{vec}(\widehat{\mathsf{U}})\in \mathcal{N}(\widehat{E}^{\top })$,
there exists $\mathsf{U}\in \mathbb{R}^{I\times J}$ with $\mathrm{vec}(%
\mathsf{U})\in \mathcal{N}(E^{\top })$ such that $\widehat{\mathsf{U}}%
=Q_{x}^{-\top }\mathsf{U}Q_{y}^{-1}$.
\end{Remark}

%\textcolor{blue}{
%\begin{Remark} Let $E\in \mathbb{R}^{IJ\times R}$ and let $\mathsf{U}\in
%\mathbb{R}^{I\times J}$ such that $\mathrm{vec}(\mathsf{U})\in \mathcal{N}%
%(E^{\top })$. Suppose that $\mathbf{y}\otimes \mathbf{x}$ is in the column
%space of $E$, where $\mathbf{x}\in \mathbb{R}^{I}$ and $\mathbf{y}\in
%\mathbb{R}^{J}$, it follows from Lemma \ref{lem3.1}, $\mathbf{x}^{\top }%
%\mathsf{U}\mathbf{y}=0$. Let $Q_{x}\in \mathbb{R}^{I\times I}$, $Q_{y}\in
%\mathbb{R}^{J\times J}$ be invertible. Then it is easily seen that $\hat{%
%\mathbf{y}}\otimes \hat{\mathbf{x}}\equiv (Q_{y}\mathbf{y})\otimes (Q_{x}%
%\mathbf{x})$ is in the column space of $\widehat{E}\equiv (Q_{y}\otimes
%Q_{x})E$ and $\hat{\mathbf{x}}^{\top }\widehat{\mathsf{U}}\hat{\mathbf{y}}=0$%
%, where $\mathrm{vec}(\widehat{\mathsf{U}})$ is any vector in $\mathcal{N}(%
%\widehat{E}^{\top })$. In fact, for each $\widehat{\mathsf{U}}$ with $%
%\mathrm{vec}(\widehat{\mathsf{U}})\in \mathcal{N}(\widehat{E}^{\top })$,
%there exists $\mathsf{U}\in \mathbb{R}^{I\times J}$ with $\mathrm{vec}(%
%\mathsf{U})\in \mathcal{N}(E^{\top })$ such that $\widehat{\mathsf{U}}%
%=Q_{x}^{-\top }\mathsf{U}Q_{y}^{-1}$.
%\end{Remark}}

\begin{Lemma} \label{lem3.3}
Suppose that $\mathbf{x}_r\in \mathbb{R}^I$ and $\mathbf{y}_r\in \mathbb{R}^J$, for $r=1,2,\cdots,R$, such that $\{\mathbf{y}_r\otimes \mathbf{x}_r\}_{r=1}^{R}\subset \mathbb{R}^{IJ}$ is a linearly independent
set. Let $E$ be an $IJ\times R$ matrix such that $\{\mathbf{y}_r\otimes \mathbf{x}_r\}_{r=1}^{R}$ forms a basis of column space of $E$. Then
\begin{align*}
\mathcal{N}(E^{\top})=\left\{\mathrm{vec}(\mathsf{U})|\ \mathsf{U}\in
\mathbb{R}^{I\times J} \text{ such that } \mathbf{x}_{r}^{\top}\mathsf{U}%
\mathbf{y}_{r}=0\text{ for }r=1,\cdots, R\right\}.
\end{align*}
\end{Lemma}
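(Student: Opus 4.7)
My plan is to show the claimed equality by a direct computation using the $\mathrm{vec}$-Kronecker identity from \eqref{eq2.1}, exploiting the fact that membership in $\mathcal{N}(E^{\top})$ depends only on the column space of $E$, not on the particular choice of basis.

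First, I would reduce to the convenient basis. Since $\{\mathbf{y}_r\otimes\mathbf{x}_r\}_{r=1}^R$ is linearly independent and forms a basis of the column space of $E$, there exists an invertible $S\in\mathbb{R}^{R\times R}$ with
\begin{equation*}
E\,S = [\mathbf{y}_1\otimes\mathbf{x}_1,\,\ldots,\,\mathbf{y}_R\otimes\mathbf{x}_R].
\end{equation*}
Because $S$ is invertible, $\mathcal{N}(E^{\top}) = \mathcal{N}((ES)^{\top})$. So a vector $\mathrm{vec}(\mathsf{U})$ lies in $\mathcal{N}(E^{\top})$ if and only if $(\mathbf{y}_r\otimes\mathbf{x}_r)^{\top}\mathrm{vec}(\mathsf{U})=0$ for every $r\in\{1,\ldots,R\}$.

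Next, I would apply the identity $\mathrm{vec}(AXB) = (B^{\top}\otimes A)\mathrm{vec}(X)$ from \eqref{eq2.1} with $A=\mathbf{x}_r^{\top}$, $X=\mathsf{U}$, $B=\mathbf{y}_r$ to rewrite each inner product as
\begin{equation*}
(\mathbf{y}_r\otimes\mathbf{x}_r)^{\top}\mathrm{vec}(\mathsf{U})
= (\mathbf{y}_r^{\top}\otimes\mathbf{x}_r^{\top})\mathrm{vec}(\mathsf{U})
= \mathrm{vec}(\mathbf{x}_r^{\top}\mathsf{U}\mathbf{y}_r)
= \mathbf{x}_r^{\top}\mathsf{U}\mathbf{y}_r,
\end{equation*}
where the last equality uses that $\mathbf{x}_r^{\top}\mathsf{U}\mathbf{y}_r$ is a scalar. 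Combining the two steps, $\mathrm{vec}(\mathsf{U})\in\mathcal{N}(E^{\top})$ iff $\mathbf{x}_r^{\top}\mathsf{U}\mathbf{y}_r=0$ for all $r=1,\ldots,R$, which is exactly the claimed characterization.

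There is no real obstacle here; the content of the lemma is really just the two observations that (a) the null space of $E^{\top}$ is the orthogonal complement of any basis of the column space of $E$, and (b) the Kronecker-vec identity converts the inner product $(\mathbf{y}_r\otimes\mathbf{x}_r)^{\top}\mathrm{vec}(\mathsf{U})$ into the bilinear form $\mathbf{x}_r^{\top}\mathsf{U}\mathbf{y}_r$. One small bookkeeping point is to be explicit that both inclusions $\subseteq$ and $\supseteq$ follow from the same chain of equivalences, so the proof reads as a single ``iff'' argument rather than two separate containments.
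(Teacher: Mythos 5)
Your proof is correct and takes essentially the same approach as the paper's: both reduce the statement to the orthogonality relations $(\mathbf{y}_r\otimes\mathbf{x}_r)^{\top}\mathrm{vec}(\mathsf{U})=0$ and then invoke the vec--Kronecker identity to rewrite these as $\mathbf{x}_r^{\top}\mathsf{U}\mathbf{y}_r=0$. The only stylistic difference is that you package the argument as a single chain of equivalences via an explicit change-of-basis matrix $S$, whereas the paper proves the two containments separately.
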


\begin{proof}
For any vector $\mathbf{u}\in \mathcal{N}(E^{\top})$, let $\mathsf{U}={\rm vec}^{-1}(\mathbf{u})\in \mathbb{R}^{I\times J}$, then we have
$0=(\mathbf{y}_r\otimes \mathbf{x}_r)^{\top}\mathbf{u}=\mathbf{x}_r^{\top}\mathsf{U}\mathbf{y}_r$,
for each $r=1,\cdots, R$. This proves the  inclusion.

Suppose that $\mathsf{U}\in \mathbb{R}^{I\times J}$ such that $\mathbf{x}_{r}^{\top}\mathsf{U}\mathbf{y}_{r}=0$, for each $r=1,\cdots, R$. Let $\mathbf{u}={\rm vec}(\mathsf{U})$, then we have $(\mathbf{y}_r\otimes \mathbf{x}_r)^{\top}\mathbf{u}=0$, for each $r=1,\cdots, R$. Since $\{\mathbf{y}_r\otimes \mathbf{x}_r\}_{r=1}^{R}$ forms a basis of column space of $E$, we obtain $\mathbf{u}\in \mathcal{N}(E^{\top})$.
\end{proof}

Suppose that $\mathcal{A}\in \mathbb{R}^{I\times J\times K}$ has a CP
decomposition as in \eqref{3tensor} with $R\leqslant R^{\ast }$, where $%
R^{\ast }$ is defined in \eqref{eq3.2}. Let $E\in \mathbb{R}^{IJ\times R}$
and $F\in \mathbb{R}^{K\times R}$ be of full column rank such that %
\eqref{EFdec} holds, where $T\in \mathbb{R}^{IJ\times K}$ is defined in %
\eqref{T}. Then
\begin{equation*}
\mathrm{dim}(\mathcal{N}(E^{\top }))=IJ-R\geqslant IJ-R^{\ast }=I+J-2.
\end{equation*}%
Let $\{\mathbf{u}_{1},\mathbf{u}_{2},\cdots ,\mathbf{u}_{I+J-2}\}$ be an
arbitrary linearly independent set of $\mathcal{N}(E^{\top })$ and $\mathsf{U}_{j}=\mathrm{vec}^{-1}(\mathbf{u}_{j})\in \mathbb{R}^{I\times J}$
for $j=1,2,\cdots ,I+J-2$. Then we can construct a system of polynomials
$P(\mathbf{x},\mathbf{y})$ in \eqref{eq3.8}, where $\mathbf{c}%
_{x}\in \mathbb{R}^{I}$, $\mathbf{c}_{y}\in \mathbb{R}^{J}$ are randomly
generated. From \eqref{3tensor}, \eqref{YX} and Theorem \ref{thm3.1}, we
know that
\begin{align}  \label{scal_sol}
(\hat{\mathbf{x}}_{r},\hat{\mathbf{y}}_{r})=\left( \frac{1}{\mathbf{c}%
_{x}^{\top }\mathbf{x}_{r}}\mathbf{x}_{r},\frac{1}{\mathbf{c}_{y}^{\top }%
\mathbf{y}_{r}}\mathbf{y}_{r}\right) ,\text{ for }r=1,2,\cdots ,R,
\end{align}
are real solutions of $P(\mathbf{x},\mathbf{y})=\mathbf{0}$, where $\mathbf{x}_{r}$, $%
\mathbf{y}_{r}$ are given in \eqref{3tensor}. Next, we show that those $R$ real
solutions, $\{(\hat{\mathbf{x}}_{r},\hat{\mathbf{y}}_{r})\}_{r=1}^{R}$, of $P(\mathbf{x},\mathbf{y})=\mathbf{0}$ are isolated, generically.

\begin{Theorem}
\label{thm3.4}
Suppose that $\mathcal{A}\in \mathbb{R}^{I\times J\times K}$
has a CP decomposition as in \eqref{3tensor} with $R\leqslant R^*$ and $P(%
\mathbf{x},\mathbf{y})$ has the form in \eqref{eq3.8}, where $\{\mathrm{vec}(%
\mathsf{U}_j)\}_{j=1}^{I+J-2}$ is an arbitrary linearly independent set of $%
\mathcal{N}(E^{\top})$ and $\mathbf{c}_x\in \mathbb{R}^I$, $\mathbf{c}_y\in
\mathbb{R}^J$ are randomly generated. Let $\{ (\hat{\mathbf{x}}_r,\hat{\mathbf{y}}_r)\}_{r=1}^{R}$ be defined in \eqref{scal_sol}. Then for each $r\in \{1,\cdots,R\}$, $(\hat{\mathbf{x}}_r,\hat{\mathbf{y}}_r)$ is an isolated solution of $P(\mathbf{x},\mathbf{y})=\mathbf{0}$, generically.
%
%\begin{itemize}
%\item[(i).] $(\hat{\mathbf{x}}_r,\hat{\mathbf{y}}_r)$ is an isolated
%solution of $P(\mathbf{x},\mathbf{y})=\mathbf{0}$, generically;
%
%\item[(ii).] if $(\hat{\mathbf{x}},\hat{\mathbf{y}})$ is a real solution of $%
%P(\mathbf{x},\mathbf{y})=\mathbf{0}$, then $(\hat{\mathbf{x}},\hat{\mathbf{y}})$ is
%isolated, generically.
%\end{itemize}
\end{Theorem}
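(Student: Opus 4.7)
The plan is to verify that the Jacobian $J_r := DP(\hat{\bx}_r,\hat{\by}_r) \in \mathbb{R}^{(I+J)\times(I+J)}$ is nonsingular for generic data, whence isolation of $(\hat{\bx}_r,\hat{\by}_r)$ follows immediately from the inverse function theorem. Using the identity $\bx^{\top}\mathsf{U}_j\by = \mathrm{vec}(\mathsf{U}_j)^{\top}(\by\otimes\bx)$, the first $I+J-2$ rows of $J_r$ take the clean block form $M^{\top}[\,\hat{\by}_r\otimes \mathbb{I}_I \,\mid\, \mathbb{I}_J\otimes\hat{\bx}_r\,]$ where $M=[\bu_1,\ldots,\bu_{I+J-2}]$, while the last two rows are $[\bc_x^{\top},\mathbf{0}^{\top}]$ and $[\mathbf{0}^{\top},\bc_y^{\top}]$.

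I would next characterize $\ker J_r$: a vector $(\mathbf{a},\mathbf{b})$ lies in it iff the Segre tangent vector $\hat{\by}_r\otimes\mathbf{a} + \mathbf{b}\otimes\hat{\bx}_r$ is orthogonal to every $\bu_j$, together with $\bc_x^{\top}\mathbf{a} = \bc_y^{\top}\mathbf{b} = 0$. Let $T_r \subset \mathbb{R}^{IJ}$ be the $(I+J-1)$-dimensional affine tangent space to the Segre variety at $\hat{\by}_r\otimes\hat{\bx}_r$. The trivial scaling family $(\mathbf{a},\mathbf{b}) = (\alpha\hat{\bx}_r, \beta\hat{\by}_r)$ maps to $(\alpha+\beta)\hat{\by}_r\otimes\hat{\bx}_r$, which sits in $\mathrm{col}(E) \subseteq \mathrm{span}\{\bu_j\}^{\perp}$ because $\bu_j \in \mathcal{N}(E^{\top})$; so this family always lies in the kernel of the top block, but the two normalization rows annihilate it because $\bc_x^{\top}\hat{\bx}_r = \bc_y^{\top}\hat{\by}_r = 1$ forces $\alpha = \beta = 0$. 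Hence $J_r$ is nonsingular precisely when $T_r \cap \mathrm{span}\{\bu_j\}^{\perp} = \mathrm{span}(\hat{\by}_r \otimes \hat{\bx}_r)$ is exactly one-dimensional, the minimum allowed by the dimension sum $\dim T_r + \dim \mathrm{span}\{\bu_j\}^{\perp} = (I+J-1) + R^{\ast} = IJ+1$.

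To turn this generic expectation into a rigorous statement, I would write $\det J_r$ as a polynomial in the entries of $X,Y,\bc_x,\bc_y$ and in a polynomial parametrization of the $\bu_j$'s inside $\mathcal{N}(E^{\top})$; then, by Lemma~\ref{lem2.1}, it suffices to exhibit a single instance where $\det J_r \neq 0$. The main obstacle is producing such an instance: the naive choice -- taking $\bx_s,\by_s$ to be standard basis vectors so that $Y\odot X$ is a block of the identity and letting $\{\bu_j\}$ be the complementary standard basis vectors -- places $\hat{\bx}_r,\hat{\by}_r$ on coordinate axes and zeroes out most rows of the top block. I would sidestep this via Lemma~\ref{lem3.2}, transforming to the canonical form $\hat{\bx}_r = \hat{\by}_r = \mathbf{e}_1$, after which each top row of $J_r$ reduces to $[(\mathsf{U}_j)_{\cdot,1}^{\top},\,(\mathsf{U}_j)_{1,\cdot}]$ and lives in the hyperplane $H = \{a_1 = b_1\}$. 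Nonsingularity then reduces to two explicit open conditions on $(\mathsf{U}_j,\bc_x,\bc_y)$: injectivity of the restriction map $\mathsf{U} \mapsto [(\mathsf{U})_{\cdot,1},(\mathsf{U})_{1,\cdot}]$ on $\mathrm{span}\{\mathsf{U}_j\}$, and non-membership of $[\bc_x^{\top},\bc_y^{\top}]$ in its image, each of which holds after a small generic perturbation of the construction from Lemma~\ref{lem2.2}(i), thereby producing the desired explicit nondegenerate instance.
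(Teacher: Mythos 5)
Your overall strategy matches the paper's: reduce to nonsingularity of the Jacobian, invoke Lemma~\ref{lem3.2} to normalize $\hat{\mathbf{x}}_r=\hat{\mathbf{y}}_r=\mathbf{e}_1$, observe that the $I+J-2$ quadratic rows then carry only the first row and first column of each $\mathsf{U}_j$ and the normalization rows supply the missing two directions, and finally appeal to Lemma~\ref{lem2.1} after exhibiting a nondegenerate instance. Your Segre-tangent-space dimension count $(I+J-1)+R^\ast=IJ+1$ is a nice geometric rephrasing of the paper's setup, and your observation that the trivial scaling kernel is killed by the normalization constraints is correct.

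However, there is a genuine gap at exactly the step you compress into ``each of which holds after a small generic perturbation of the construction from Lemma~\ref{lem2.2}(i).'' The first of your two open conditions---injectivity of $\mathsf{U}\mapsto[(\mathsf{U})_{\cdot,1},(\mathsf{U})_{1,\cdot}]$ on $\mathrm{span}\{\mathsf{U}_j\}$---can only hold on a nonempty open set if the \emph{full} restriction map $\pi:\mathcal{N}(E^{\top})\to\mathbb{R}^{I+J-2}$ is surjective; otherwise $\det J_r$ vanishes identically for every choice of linearly independent $\{\mathsf{U}_j\}\subset\mathcal{N}(E^{\top})$ and no perturbation can rescue it. Proving surjectivity of $\pi$ is the real content of the theorem, and it is where the hypothesis $R\leqslant R^\ast$ enters: the paper characterizes $\mathcal{N}(E^{\top})$ via Lemma~\ref{lem3.3} as the set of $\mathsf{U}$ with $\hat{\mathbf{x}}_r^{\top}\mathsf{U}\hat{\mathbf{y}}_r=0$ for all $r$, rewrites this (after normalization) as the underdetermined linear system $\Theta\,\mathrm{vec}(\widehat{\mathsf{U}}_j)=-\mathbf{b}_j$ in the lower-right block $\widehat{\mathsf{U}}_j$, and uses Lemma~\ref{lem2.2}(i) together with $R-1\leqslant(I-1)(J-1)$ to show $\Theta$ has full row rank $R-1$, hence the system is solvable for \emph{every} prescribed first row and column $(\phi_j,\varphi_j)$. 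That solvability is what makes $\pi$ surjective. Your proposal never establishes it; the $\mathsf{U}_j$ are constrained to $\mathcal{N}(E^{\top})$, which itself depends on the data, so ``small generic perturbation'' does not on its own yield a valid witness. Adding Lemma~\ref{lem3.3} plus the $\Theta$-rank argument would close the gap and bring your sketch in line with the paper's proof.
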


\begin{proof}
For each $r\in \{1,\cdots,R\}$, we know that $(\hat{\mathbf{x}}_r,\hat{\mathbf{y}}_r)$ is a solution of $P(\mathbf{x},\mathbf{y})=\mathbf{0}$.  Now we claim that $(\hat{\mathbf{x}}_r,\hat{\mathbf{y}}_r)$ is isolated. We only prove that $(\hat{\mathbf{x}}_1,\hat{\mathbf{y}}_1)$ is isolated. It suffices to show that the Jacobian of  $P(\mathbf{x},\mathbf{y})$ at  $\mathbf{ x}=\hat{\mathbf{ x}}_1$, $\mathbf{ y}=\hat{\mathbf{ y}}_1$, denoted by $DP(\hat{\mathbf{x}}_1,\hat{\mathbf{y}}_1)$,  is invertible. From Lemma \ref{lem3.2}, we may assume $\hat{\mathbf{ x}}_1=[1,0\cdots,0]^{\top}\in \mathbb{R}^I$, $\hat{\mathbf{ y}}_1=[1,0,\cdots,0]^{\top}\in \mathbb{R}^J$ and the other $R-1$ solutions are $(\hat{\mathbf{ x}}_r,\hat{\mathbf{ y}}_r)$ for $r=2,3,\ldots, R$. For each $j\in \{1,2,\cdots,I+J-2\}$, since $p_j(\hat{\mathbf{x}}_1,\hat{\mathbf{y}}_1)=0$, we have
\begin{align}\label{eq3.9}
\mathsf{U}_j=\left[\begin{array}{cc}0&\varphi_j^{\top}\\\phi_j&\widehat{\mathsf{U}}_{j}\end{array}\right]\in \mathbb{R}^{J\times I}\text{ and }
Dp_j(\hat{\mathbf{x}}_1,\hat{\mathbf{y}}_1)=[0,\phi_j^{\top}|0,\varphi_j^{\top}].
\end{align}
%\begin{align*}
%\left[\begin{array}{c}0\\\phi_j\end{array}\right]=D_xp_j(\hat{x},\hat{y})=\mathsf{U}_j^{\top}\hat{y}\in\mathbb{R}^{I},\ \left[\begin{array}{c}0\\\varphi_j\end{array}\right]=D_yp_j(\hat{x},\hat{y})=\mathsf{U}_j\hat{x}\in\mathbb{R}^{J}.
%\end{align*}
It follows from \eqref{eq3.8} and \eqref{eq3.9} that
\begin{align*}
DP(\hat{\mathbf{x}}_1,\hat{\mathbf{y}}_1)=\left[\begin{array}{cc|cc}0&\phi_1^{\top}&0&\varphi_1^{\top}\\
\vdots&\vdots&\vdots&\vdots\\
0&\phi_{I+J-2}^{\top}&0&\varphi_{I+J-2}^{\top}\\\hline
&\mathbf{ c}_x^{\top} &&0\\
&0 &&\mathbf{ c}_y^{\top}\\\end{array}\right].
\end{align*}
Since $\mathbf{ c}_x\in \mathbb{R}^I$, $\mathbf{ c}_y\in \mathbb{R}^J$ are randomly generated, the Jacobian matrix $DP(\hat{\mathbf{x}}_1,\hat{\mathbf{y}}_1)$ is invertible if and only if the matrix
\begin{align}\label{eq3.10}
\Phi=\left[\begin{array}{c|c}\phi_1^{\top}&\varphi_1^{\top}\\
\vdots&\vdots\\
\phi_{I+J-2}^{\top}&\varphi_{I+J-2}^{\top}\\\end{array}\right]
\end{align}
is invertible. Now, we  show that $\Phi$ is invertible generically, if $\{{\rm vec}(\mathsf{U}_j)\}_{j=1}^{I+J-2}$ is an arbitrary linearly independent set of   $\mathcal{N}(E^{\top})$. Since $(\hat{\mathbf{x}}_r,\hat{\mathbf{y}}_r)$ for $r=2,3,\ldots, R$ are solutions of $P(\mathbf{x},\mathbf{y})=\mathbf{0}$ in \eqref{eq3.8}, we obtain that for each $j\in \{1,2,\cdots,I+J-2\}$, $\mathsf{U}_j$ in \eqref{eq3.9} satisfies
\begin{align}\label{eq3.11}
0&=\hat{\mathbf{x}}_r^{\top}\mathsf{U}_j\hat{\mathbf{y}}_r=[\hat{x}_{r,1},\hat{\mathbf{x}}_{r,2}^{\top}]\left[\begin{array}{cc}0&\varphi_j^{\top}\\ \phi_j&\widehat{\mathsf{U}}_{j}\end{array}\right]\left[\begin{array}{c}\hat{y}_{r,1}\\\hat{\mathbf{y}}_{r,2}\end{array}\right]\nonumber\\
&=\hat{x}_{r,1}(\varphi_j^{\top}\hat{\mathbf{y}}_{r,2})+\hat{y}_{r,1}(\hat{\mathbf{x}}_{r,2}^{\top}\phi_j)+\hat{\mathbf{x}}_{r,2}^{\top}\widehat{\mathsf{U}}_j\hat{\mathbf{y}}_{r,2}\nonumber\\
&=\hat{x}_{r,1}(\varphi_j^{\top}\hat{\mathbf{y}}_{r,2})+\hat{y}_{r,1}(\hat{\mathbf{x}}_{r,2}^{\top}\phi_j)+(\hat{\mathbf{y}}_{r,2}\otimes \hat{\mathbf{x}}_{r,2})^{\top}{\rm vec}(\widehat{\mathsf{U}}_j),
\end{align}
for $r=2,3,\ldots,R$.
Since $\hat{\mathbf{ x}}_r\in \mathbb{R}^{I}$ and $\hat{\mathbf{ y}}_r\in \mathbb{R}^J$ are generic vectors, so are $\hat{\mathbf{x}}_{r,2}\in \mathbb{R}^{I-1}$ and $\hat{\mathbf{y}}_{r,2}\in \mathbb{R}^{J-1}$. Let
\begin{align*}
\Theta=\left[\begin{array}{c}(\hat{\mathbf{y}}_{2,2}\otimes \hat{\mathbf{x}}_{2,2})^{\top}\\
\vdots\\
(\hat{\mathbf{y}}_{R,2}\otimes \hat{\mathbf{x}}_{R,2})^{\top}\\\end{array}\right]~ \text{and} ~~ \mathbf{ b}_j=\left[\begin{array}{c}\hat{x}_{2,1}(\varphi_j^{\top}\hat{\mathbf{y}}_{2,2})+\hat{y}_{2,1}(\hat{\mathbf{x}}_{2,2}^{\top}\phi_j)\\ \vdots\\\hat{x}_{R,1}(\varphi_j^{\top}\hat{\mathbf{y}}_{R,2})+\hat{y}_{R,1}(\hat{\mathbf{x}}_{R,2}^{\top}\phi_j)\end{array}\right].
\end{align*}
Then $\Theta\in \mathbb{R}^{(R-1)\times (I-1)(J-1)}$ and $\mathbf{b}_j\in \mathbb{R}^{R-1}$. From Lemma \ref{lem2.2} $(i)$ and the fact that $R\leqslant R^*=(I-1)(J-1)+1$, we have ${\rm rank}(\Theta)= R-1$. It follows from \eqref{eq3.11} that those matrices $\phi_j$, $\varphi_j$ and $\widehat{\mathsf{U}}_j$ for $j=1,2,\cdots,I+J-2$ should satisfy
\begin{align}\label{eq3.12}
\Theta{\rm vec}(\widehat{\mathsf{U}}_j)=-\mathbf{ b}_j.
\end{align}
Since ${\rm rank}(\Theta)= R-1$, the linear system \eqref{eq3.12} has a solution for arbitrary $\phi_j$, $\varphi_j$. From Lemma \ref{lem3.3}, we have
\begin{align*}
\mathcal{N}(E^{\top})=\left\{{\rm vec}\left(\left[\begin{array}{cc}0&\varphi^{\top}\\\phi&\widehat{\mathsf{U}}\end{array}\right]\right)|\varphi\in \mathbb{R}^{J-1},  \phi\in \mathbb{R}^{I-1} \text{ and }\widehat{\mathsf{U}} \text{ satisfies \eqref{eq3.12}} \right\}.
\end{align*}
 Since $\left\{{\rm vec}\left(\left[\begin{array}{cc}0&\varphi_j^{\top}\\\phi_j&\widehat{\mathsf{U}}_{j}\end{array}\right]\right)\right\}_{j=1}^{I+J-2}$ is an arbitrary linearly independent set of   $\mathcal{N}(E^{\top})$, the matrix $\Phi$ in \eqref{eq3.10} is invertible generically. Hence, $(\hat{\mathbf{x}}_1,\hat{\mathbf{y}}_1)$ is isolated.
\end{proof}

Let
\begin{align} \label{realsolset}
S_\mathbb{R}=\{(\hat{\mathbf{x}},\hat{\mathbf{y}})|(\hat{\mathbf{x}},\hat{%
\mathbf{y}})\text{ is a real isolated solution of }P(\mathbf{x},\mathbf{y}%
)=\mathbf{0}\}.
\end{align}
Assume that $S_\mathbb{R}$ has $s$ real vectors.
Note that $s\geqslant R$ because $\{(\hat{\mathbf{x}}_r,\hat{\mathbf{y}}_r)\}_{r=1}^{R}\subseteq
S_\mathbb{R}$.
Let $\widetilde{X}=[\tilde{\mathbf{x}}_{1},\cdots ,\tilde{\mathbf{x}}_{R}]$
and $\widetilde{Y}=[\tilde{\mathbf{y}}_{1},\cdots ,\tilde{\mathbf{y}}_{R}]$
, where $\{(\tilde{\mathbf{x}}_r,\tilde{\mathbf{y}}_r)\}_{r=1}^{R}\subseteq S_\mathbb{R}$ are $R$ distinct vectors. If $\widetilde{Y}\odot \widetilde{X}$ is of full
column rank, then from Corollary \ref{cor3.2} and Remark \ref{rem3.1}, we can construct a CP decomposition of $\mathcal{A}$. The following theorem can be obtained directly.
%\textcolor{blue}{Generically, $\widetilde{Y}\odot \widetilde{X}$ is of full
%column rank. From Corollary \ref{cor3.2}, Remark \ref{rem3.1} and Theorem %
%\ref{thm3.4}, we have the following result.}

\begin{Theorem}
\label{thm3.5} Let $\mathcal{A}\in \mathbb{R}^{I\times J\times K}$
have a CP decomposition as in \eqref{3tensor} with $R=R^*$. Suppose that the system of polynomials $P(\mathbf{x},\mathbf{y})=\mathbf{0}$ in \eqref{eq3.8} has only isolated solutions. Then $\mathcal{A}$
has finitely many CP decompositions. In fact, it has at most
$\frac{s!}{R!(s-R)!}$ CP decompositions, where $s$ is the number of vectors in $S_\mathbb{R}$.
\end{Theorem}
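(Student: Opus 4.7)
The plan is to construct a map from the set of CP decompositions of $\mathcal{A}$ into the collection of $R$-element subsets of $S_{\mathbb{R}}$ and show this map is injective; the bound $\binom{s}{R}=\frac{s!}{R!(s-R)!}$ then follows immediately.

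First, I would take an arbitrary CP decomposition $\mathcal{A}=\sum_{r=1}^{R}\tilde{\mathbf{x}}_{r}\circ\tilde{\mathbf{y}}_{r}\circ\tilde{\mathbf{z}}_{r}$ with factor matrices $\widetilde{X},\widetilde{Y},\widetilde{Z}$. The matricization identity $T=(\widetilde{Y}\odot\widetilde{X})\widetilde{Z}^{\top}$ combined with $\mathrm{rank}(T)=R$, and with the fact that $\widetilde{Y}\odot\widetilde{X}$ has exactly $R$ columns, forces $\widetilde{Y}\odot\widetilde{X}$ to have full column rank $R$; consequently its columns span the column space of $E$. Theorem \ref{thm3.1} then guarantees that each pair $(\tilde{\mathbf{x}}_{r},\tilde{\mathbf{y}}_{r})$ satisfies the nullspace relations in \eqref{eq3.5}. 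Rescaling via $(\hat{\mathbf{x}}_{r},\hat{\mathbf{y}}_{r})=\bigl(\tilde{\mathbf{x}}_{r}/(\mathbf{c}_{x}^{\top}\tilde{\mathbf{x}}_{r}),\tilde{\mathbf{y}}_{r}/(\mathbf{c}_{y}^{\top}\tilde{\mathbf{y}}_{r})\bigr)$, which is well defined because $\mathbf{c}_{x},\mathbf{c}_{y}$ are random, produces $R$ solutions of the full normalized system $P(\mathbf{x},\mathbf{y})=\mathbf{0}$ in \eqref{eq3.8}. By hypothesis every solution of $P=\mathbf{0}$ is isolated, so these $R$ pairs lie in $S_{\mathbb{R}}$, and they are pairwise distinct, since any coincidence would make two columns of $\widetilde{Y}\odot\widetilde{X}$ parallel and contradict full column rank.

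Second, I would show that the map \emph{(CP decomposition of $\mathcal{A}$) $\mapsto$ (unordered $R$-subset of $S_{\mathbb{R}}$)} is injective up to the permutation ambiguity already built into the definition of CP uniqueness. Once such a subset is fixed, the columns of $\widetilde{X}$ and $\widetilde{Y}$ are determined up to joint column permutation and a pair of diagonal scalings; both permutation and scaling can be absorbed into $\widetilde{Z}$ without altering the rank-one summands $\tilde{\mathbf{x}}_{r}\circ\tilde{\mathbf{y}}_{r}\circ\tilde{\mathbf{z}}_{r}$. The matrix $\widetilde{Z}^{\top}$ is then uniquely recovered from $\widetilde{Z}^{\top}=(\widetilde{Y}\odot\widetilde{X})^{+}T$ via the pseudoinverse, because $\widetilde{Y}\odot\widetilde{X}$ has full column rank. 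Thus each unordered $R$-subset of $S_{\mathbb{R}}$ yields at most one CP decomposition of $\mathcal{A}$.

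Counting the subsets then produces the claimed bound $s!/(R!(s-R)!)$. The main subtlety to watch will be the bookkeeping of the \emph{scaling} freedom between the three factors: I must verify that the normalization $\mathbf{c}_{x}^{\top}\mathbf{x}=\mathbf{c}_{y}^{\top}\mathbf{y}=1$ exactly matches the equivalence used in the definition of CP uniqueness (which only quotients by permutations of the rank-one summands), so that passing to the unordered $R$-subset of $S_{\mathbb{R}}$ is genuinely a complete invariant and not a lossy reduction; once that is nailed down, everything else is elementary linear algebra.
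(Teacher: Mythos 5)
Your proposal is correct and follows the same underlying idea the paper gestures at (which it dismisses as ``obtained directly'' from Corollary~\ref{cor3.2} and Remark~\ref{rem3.1}): CP decompositions of $\mathcal{A}$ correspond to $R$-element subsets of $S_{\mathbb{R}}$, so there are at most $\binom{s}{R}$ of them. You go further than the paper by explicitly establishing the forward direction --- that \emph{every} CP decomposition, once normalized, lands on an $R$-subset of $S_{\mathbb{R}}$ --- using the chain $\mathrm{rank}(T)=R \Rightarrow \widetilde{Y}\odot\widetilde{X}$ full column rank $\Rightarrow$ column span equals that of $E$ $\Rightarrow$ Theorem~\ref{thm3.1} applies, and then checking injectivity via the pseudoinverse recovery of $\widetilde{Z}$. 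The paper only states the reverse construction (from a subset to a decomposition) and leaves the implicit completeness/injectivity unaddressed; your write-up is therefore a legitimate fleshing-out rather than a different route. The one caveat you flag yourself --- that the normalization by $\mathbf{c}_x^{\top}\tilde{\mathbf{x}}_r$ and $\mathbf{c}_y^{\top}\tilde{\mathbf{y}}_r$ must be nonvanishing, which holds only generically in $\mathbf{c}_x,\mathbf{c}_y$ (otherwise a decomposition could ``escape to infinity'' relative to the affine chart cut out by $\mathbf{c}_x^{\top}\mathbf{x}=1$, $\mathbf{c}_y^{\top}\mathbf{y}=1$) --- is a real subtlety that the paper also glosses over, so it does not count against your argument relative to the source.
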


\begin{Remark}
In numerical experiments, the solutions of $P(\mathbf{x},\mathbf{y})=\mathbf{0}$ in \eqref{eq3.8} are isolated. Suppose that $R<R^{\ast }$. The solutions that correspond to the CP decomposition should satisfy $P(\mathbf{x},\mathbf{y})=\mathbf{0}$ in \eqref{eq3.8} and $R^{\ast }-R$ equations $q_{i}(\mathbf{x},
\mathbf{y})=0$ in \eqref{drop}. In this case, we only obtain $R$ real isolated solution, i.e.,  $\mathcal{A}$
has a unique CP decomposition.
\end{Remark}

Now, we consider the case $R>R^{\ast }$, i.e., the system \eqref{eq3.6} is an underdetermined system. We show that the real tensor $\mathcal{A}$ has infinitely many CP decompositions generically.
\begin{Theorem}
\label{thm_under} Suppose that $\mathcal{A}\in \mathbb{R}^{I\times J\times
K} $ has a CP decomposition as in \eqref{3tensor} with $R>R^*$ and $I\geqslant 2$. Then $%
\mathcal{A}$ has infinitely many CP decompositions generically.
\end{Theorem}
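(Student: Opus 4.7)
The strategy is to adapt the apparatus developed for $R\leqslant R^{\ast}$: I will show that when $R>R^{\ast}$ the system \eqref{eq3.6}, now containing $IJ-R+2<I+J$ polynomial equations in $I+J$ unknowns, carries a positive-dimensional real solution manifold through the known point $(\hat{\mathbf{x}}_{1},\hat{\mathbf{y}}_{1})$ of \eqref{scal_sol}. Each point on this manifold yields a rank-$1$ vector $\mathbf{y}(t)\otimes\mathbf{x}(t)$ in the column space of $E$, and Corollary \ref{cor3.2} together with Remark \ref{rem3.1} converts such a perturbation into a new CP decomposition of $\mathcal{A}$.

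I would begin by reusing the factorisation $T=EF^{\top}$ with $E\in\mathbb{R}^{IJ\times R}$ of full column rank and by forming the system $P(\mathbf{x},\mathbf{y})=\mathbf{0}$ of \eqref{eq3.6}. The critical step is to show that the Jacobian $DP(\hat{\mathbf{x}}_{1},\hat{\mathbf{y}}_{1})\in\mathbb{R}^{(IJ-R+2)\times(I+J)}$ attains full row rank $IJ-R+2$ generically. Applying Lemma \ref{lem3.2} I may assume $\hat{\mathbf{x}}_{1}=[1,0,\cdots,0]^{\top}$ and $\hat{\mathbf{y}}_{1}=[1,0,\cdots,0]^{\top}$, so the Jacobian acquires the same block structure used in the proof of Theorem \ref{thm3.4}: its full row rank reduces to the requirement that the $(IJ-R)\times(I+J-2)$ matrix $\Phi$ built from the off-corner rows $(\phi_{j},\varphi_{j})$ of any basis $\{\mathsf{U}_{j}\}$ of $\mathcal{N}(E^{\top})$ attain rank $IJ-R$. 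Parametrising $\mathcal{N}(E^{\top})$ via Lemma \ref{lem3.3} produces the same auxiliary linear system \eqref{eq3.12} with matrix $\Theta\in\mathbb{R}^{(R-1)\times(I-1)(J-1)}$. Because $R>R^{\ast}$ forces $R-1\geqslant R^{\ast}>(I-1)(J-1)$, Lemma \ref{lem2.2}(i) now delivers the full column rank $(I-1)(J-1)$ of $\Theta$ generically. By rank-nullity the linear projection $\mathcal{N}(E^{\top})\to\mathbb{R}^{I+J-2}$, $\mathrm{vec}(\mathsf{U})\mapsto(\phi,\varphi)$, is therefore injective, so its image has dimension $IJ-R$; equivalently $\Phi$ has rank $IJ-R$ and $DP(\hat{\mathbf{x}}_{1},\hat{\mathbf{y}}_{1})$ has the required full row rank.

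By the implicit function theorem the real zero set of $P$ through $(\hat{\mathbf{x}}_{1},\hat{\mathbf{y}}_{1})$ is locally a smooth submanifold of dimension $(I+J)-(IJ-R+2)=R-R^{\ast}\geqslant 1$. Choose a smooth curve $(\mathbf{x}(t),\mathbf{y}(t))$, $t\in(-\varepsilon,\varepsilon)$, on this manifold with $(\mathbf{x}(0),\mathbf{y}(0))=(\hat{\mathbf{x}}_{1},\hat{\mathbf{y}}_{1})$, and set $\widetilde{X}(t)=[\mathbf{x}(t),\hat{\mathbf{x}}_{2},\cdots,\hat{\mathbf{x}}_{R}]$, $\widetilde{Y}(t)=[\mathbf{y}(t),\hat{\mathbf{y}}_{2},\cdots,\hat{\mathbf{y}}_{R}]$. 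Since $\widetilde{Y}(0)\odot\widetilde{X}(0)$ has full column rank and the Khatri--Rao product is continuous, the same holds for all sufficiently small $|t|$; Corollary \ref{cor3.2} and Remark \ref{rem3.1} then supply some $\widetilde{Z}(t)$ with $\mathcal{A}=[\!|\widetilde{X}(t),\widetilde{Y}(t),\widetilde{Z}(t)|\!]$.

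Finally I would confirm distinctness. The affine normalisations $\mathbf{c}_{x}^{\top}\mathbf{x}(t)=1$ and $\mathbf{c}_{y}^{\top}\mathbf{y}(t)=1$ pin down the scaling, so $\mathbf{y}(t)\otimes\mathbf{x}(t)$ equals $\hat{\mathbf{y}}_{1}\otimes\hat{\mathbf{x}}_{1}$ only at $t=0$; for small $|t|\neq 0$ it is also distinct from every $\hat{\mathbf{y}}_{r}\otimes\hat{\mathbf{x}}_{r}$, $r\geqslant 2$, by continuity. Hence the first rank-$1$ summand of the new decomposition differs from every summand in \eqref{3tensor}, so the decomposition is not obtainable from \eqref{3tensor} by permutation and rescaling, and varying $t$ yields uncountably many distinct CP decompositions of $\mathcal{A}$. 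The main obstacle lies in the Jacobian-rank computation of the second paragraph: this is where the underdetermined dimension count $R>R^{\ast}$ genuinely interacts with the genericity of $(X,Y,Z)$ through the rank behaviour of $\Theta$, and once it is in place the implicit function theorem and continuity arguments close out the proof routinely.
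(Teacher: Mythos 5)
Your proof is correct, and at a high level it follows the paper's route: reduce to showing the Jacobian $DP(\hat{\mathbf{x}}_1,\hat{\mathbf{y}}_1)$ has full row rank, invoke the implicit function theorem to produce a positive-dimensional real solution set through $(\hat{\mathbf{x}}_1,\hat{\mathbf{y}}_1)$, and then perturb only the first column pair of the factor matrices while keeping full column rank of the Khatri--Rao product. The interesting divergence is in how you establish full row rank of $\Phi$. The paper restricts to the boundary case $R=R^{\ast}+1$, sets up the enlarged matrix $\Theta=[\Theta_1|\Theta_2]\in\mathbb{R}^{(R-1)\times(I-1)J}$, verifies $\operatorname{rank}\Theta=R-1$ and $\operatorname{rank}\Theta_2=R-2$ using the chain of inequalities $(I-1)(J-1)\leqslant R-1\leqslant(I-1)J$ (this is exactly where $I\geqslant 2$ enters), and extracts an $(I+J-3)\times(I+J-3)$ invertible submatrix of $\Phi$ by picking an appropriate column of $\Theta_1$. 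Your argument bypasses this construction entirely: you observe that the kernel of the linear projection $\mathcal{N}(E^{\top})\to\mathbb{R}^{I+J-2}$, $\operatorname{vec}(\mathsf{U})\mapsto(\phi,\varphi)$, consists of matrices with $\phi=\varphi=0$ whose lower-right block lies in $\ker\Theta$; since $R-1>(I-1)(J-1)$ forces $\Theta$ to have full column rank generically by Lemma~\ref{lem2.2}(i), the projection is injective, so the images of any basis of $\mathcal{N}(E^{\top})$ are linearly independent and $\Phi$ automatically has full row rank $IJ-R$. This is shorter, handles all $R>R^{\ast}$ uniformly rather than only the critical case, and makes the dependence on the genericity of $(X,Y)$ through the rank of $\Theta$ more transparent. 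Your closing distinctness paragraph (verifying that the new rank-one summand differs from every $\hat{\mathbf{y}}_r\otimes\hat{\mathbf{x}}_r$) is also a touch more careful than the paper, which asserts but does not spell out that infinitely many choices of $(\widetilde{X},\widetilde{Y})$ yield infinitely many distinct CP decompositions.
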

\begin{proof}
From Remark \ref{rem3.1} and Corollary \ref{cor3.2}, it suffices to show that there are infinitely many $\widetilde{X}=[\tilde{\mathbf{x}}_1,\cdots, \tilde{\mathbf{
x}}_R]$ and $\widetilde{Y}=[\tilde{\mathbf{y}}_1,\cdots, \tilde{\mathbf{ y}}_R]$ such that $%
\widetilde{Y}\odot \widetilde{X}$ is of full column rank, where
$(\tilde{\mathbf{x}}_r,\tilde{\mathbf{y}}_r)$ for $r\in \{1,2,\ldots,R\}$ are real solutions of the underdetermined  system \eqref{eq3.6}. We only consider the $R=R^{\ast}+1$ case because if $R>R^{\ast}+1$ then the number of equations in  \eqref{eq3.6} is less than the  $R=R^{\ast}+1$ case. Since the tensor $\mathcal{A}$ has a CP decomposition as in \eqref{3tensor} and the factor matrices  $X=[\mathbf{x}_1,\cdots, \mathbf{
x}_R]$ and $Y=[\mathbf{y}_1,\cdots, \mathbf{ y}_R]$ are generic, it follows from Lemma \ref{lem2.2} that $%
Y\odot X$ is of full column rank. It is easily seen that for each $r\in \{1,2,\ldots,R\}$, $%
(\hat{\mathbf{x}}_r,\hat{\mathbf{y}}_r)\equiv (\mathbf{x}_r/(\mathbf{c}_x^{\top}\mathbf{x}_r) , \mathbf{y}_r/(\mathbf{
c}_y^{\top}\mathbf{ y}_r))$  is a solution of \eqref{eq3.6}. Let $\widehat{X}=[\hat{\mathbf{x}}_1,\cdots, \hat{\mathbf{
x}}_R]$ and $\hat{Y}=[\hat{\mathbf{y}}_1,\cdots, \hat{\mathbf{ y}}_R]$, we have $%
\widehat{Y}\odot \widehat{X}$ is of full column rank.

Now, we show that in the $R=R^{\ast}+1$ case,  the Jacobian  matrix $DP(\hat{\mathbf{x}}_1,\hat{\mathbf{y}}_1)\in \mathbb{R}^{(I+J-1)\times (I+J)}$ is of full row rank. From Lemma \ref{lem3.2}, we may assume $\hat{\mathbf{ x}}_1=[1,0\cdots,0]^{\top}\in \mathbb{R}^I$, $\hat{\mathbf{ y}}_1=[1,0,\cdots,0]^{\top}\in \mathbb{R}^J$. Similar the proof of Theorem \ref{thm3.4}, we only show that the matrix
\begin{align}\label{eq3.17}
\Phi\equiv [\Phi_1|\Phi_2]=\left[\begin{array}{c|c}\phi_1^{\top}&\varphi_1^{\top}\\
\vdots&\vdots\\
\phi_{I+J-3}^{\top}&\varphi_{I+J-3}^{\top}\\\end{array}\right]\in \mathbb{R}^{(I+J-3)\times (I+J-2)}
\end{align}
is of full row rank, where $\phi_{j}$ and $\varphi_j$ are given in \eqref{eq3.9} for $j\in \{1, \ldots, I+J-3\}$ which satisfy \eqref{eq3.11}. Let
\begin{align*}
\Theta\equiv [\Theta_1|\Theta_2]=\left[\begin{array}{c|c}\hat{y}_{2,1}\hat{\mathbf{x}}_{2,2}^{\top}&(\hat{\mathbf{y}}_{2,2}\otimes \hat{\mathbf{x}}_{2,2})^{\top}\\
\vdots&\vdots\\
\hat{y}_{R,1}\hat{\mathbf{x}}_{R,2}^{\top}&(\hat{\mathbf{y}}_{R,2}\otimes \hat{\mathbf{x}}_{R,2})^{\top}\\\end{array}\right]\in \mathbb{R}^{(R-1)\times (I-1)J},
\end{align*}
where $\Theta_2\in  \mathbb{R}^{(R-1)\times (I-1)(J-1)}$
Then \eqref{eq3.11} can be rewritten as
\begin{align}\label{eq3.19}
\Theta\left[\begin{array}{c}\phi_j\\\hline{\rm vec}(\widehat{\mathsf{U}}_j)\end{array}\right]=-\left[\begin{array}{c}\hat{x}_{2,1}(\varphi_j^{\top}\hat{\mathbf{y}}_{2,2})\\ \vdots\\\hat{x}_{R,1}(\varphi_j^{\top}\hat{\mathbf{y}}_{R,2})\end{array}\right],
\end{align}
for $j\in \{1, \ldots, I+J-3\}$. Using the fact that $(I-1)(J-1)\leqslant R-1= R^*=(I-1)(J-1)+1\leqslant (I-1)J$ (because $I\geqslant 2$), it follows from Lemma \ref{lem2.2} $(i)$ that ${\rm rank}(\Theta)= R-1$ and ${\rm rank}(\Theta_2)=(I-1)(J-1)= R-2$. Hence, there exists a column vector of $\Theta_1$, say $(I-1)$th column, such that $\widehat{\Theta}_2=[\Theta(:,I-1)|\Theta_2]$ is invertible.  From \eqref{eq3.19} and Lemma \ref{lem3.3}, we can obtain that the matrix $[\Phi_1(:,1:I-2)|\Phi_2]\in \mathbb{R}^{(I+J-3)\times (I+J-3)}$ is invertible, generically, where $\Phi_1$ and $\Phi_2$ are given in \eqref{eq3.17}. Hence, $\Phi$ is of full row rank. By implicit function theorem, there is a real solution curve containing the point $(\hat{\mathbf{x}}_1,\hat{\mathbf{y}}_1)$, i.e., there are infinitely many real solutions, $(\tilde{\mathbf{x}}_1,\tilde{\mathbf{y}}_1)$, nearby  $(\hat{\mathbf{x}}_1,\hat{\mathbf{y}}_1)$. Since $%
\widehat{Y}\odot \widehat{X}$ is of full column rank, we obtain that there  are infinitely many $\widetilde{X}=[\tilde{\mathbf{x}}_1,\hat{\mathbf{x}}_2,\cdots, \hat{\mathbf{
x}}_R]$ and $\widetilde{Y}=[\tilde{\mathbf{y}}_1,\hat{\mathbf{y}}_2,\cdots, \hat{\mathbf{ y}}_R]$ such that $%
\widetilde{Y}\odot \widetilde{X}$ is of full column rank. The proof is completed.
\end{proof}

%
%\begin{Theorem}
%\label{thm3.6} Suppose that $\mathcal{A}\in \mathbb{R}^{I\times J\times K}$
%has a CP decomposition as in \eqref{3tensor} with $R<R^*$. Then $\mathcal{A}$
%has a unique CP decomposition generically.
%\end{Theorem}

\subsection{Computing CP decompositions by homotopy method}

\label{sec3.2}

As discussed in subsection \ref{sec3.1}, computing CP decomposition of a
tensor $\mathcal{A}\in \mathbb{R}^{I\times J\times K}$ with $\mathrm{rank }(%
\mathcal{A})=R\leqslant \mathrm{min}\{K,R^*\}$ is equivalent to solving %
\eqref{eq3.8}. Since \eqref{eq3.8} is a polynomial system, we consider to
use a homotopy continuation method to solve it numerically.

The basic idea of using homotopy continuation method to solve a general
polynomial system $P(\mathbf{x})=\mathbf{0}$ is to deform $P(\mathbf{x})$ to another polynomial system $Q(\mathbf{x})$ whose solutions are
known. Under certain conditions, a smooth curve that emanates from a
solution of $Q(\mathbf{x})=\mathbf{0}$ will lead to a solution of $P(\mathbf{x})=\mathbf{0}$ by the homotopy%
\begin{equation*}
H(\mathbf{x},t)=\left( 1-t\right) \gamma Q\left( \mathbf{x}\right) +tP(\mathbf{x})=\mathbf{0}\text{, }t\in \left[
0,1\right] \text{,}
\end{equation*}%
where $\gamma $ is a generic nonzero complex number. If $Q\left( \mathbf{x}\right) $
is chosen properly, the following properties hold:
\begin{itemize}
\item[] \hspace{-8mm} $\bullet$ \textbf{Property0} (triviality): The solutions
of $Q(\mathbf{x})=\mathbf{0}$ are known.

\item[] \hspace{-8mm} $\bullet$ \textbf{Property1} (smoothness): The solution
set of $H(\mathbf{\mathbf{x}},t)=\mathbf{0}$ for $0\leq t\leq 1$ consists of a finite number of smooth
paths, each parameterized by $t \in [0,1]$.

\item[] \hspace{-8mm} $\bullet$ \textbf{Property2} (accessibility): Every
isolated solution of $H(\mathbf{x},1)=P(\mathbf{x})=\mathbf{0} $ can be reached by some path
originating at $t=0$.
\end{itemize}
To construct an appropriate homotopy for solving \eqref{eq3.8}, the multi-homogeneous B\'{e}zout's
number will be used. For a polynomial system $P(\mathbf{x})=(p_1(x),\ldots,p_n(x))$ where $\mathbf{x}=(x_{1},\ldots ,x_{n})$, the variables $
x_{1},\ldots ,x_{n}$ are partitioned into $m$ groups $z_{1}=\left(
x_{1}^{\left( 1\right) },\cdots ,x_{k_{1}}^{\left( 1\right) }\right) $, $%
z_{2}=\left( x_{1}^{\left( 2\right) },\cdots ,x_{k_{2}}^{\left( 2\right)
}\right) ,...,z_{m}=\left( x_{1}^{\left( m\right) },\cdots
,x_{k_{m}}^{\left( m\right) }\right) $ with $k_{1}+\cdots +k_{m}=n$. Let $%
d_{ij}$ be the degree of $p_{i}$ with respect to $z_{j}$ for $i=1,\ldots ,n$
and $j=1,\ldots ,m$. Then the multi-homogeneous B\'{e}zout's number of $P(\mathbf{x})$
with respect to $\left( z_{1},\ldots ,z_{m}\right) $ is the coefficient of $%
\lambda _{1}^{k_{1}}\lambda _{2}^{k_{2}}\cdots \lambda _{m}^{k_{m}}$ in the
product%
\begin{equation*}
\prod_{i=1}^{n}\left( d_{i1}\lambda _{1}+\cdots +d_{im}\lambda _{m}\right)
\text{.}
\end{equation*}

The following theorem plays a role in constructing a proper homotopy.

\begin{Theorem} \cite{Sommese:2005}
\label{thm3.7} Let $Q(\mathbf{x})$ be a system of polynomial chosen to have the same
multi-homogeneous form as $P(\mathbf{x})$ with respect to certain partition of the
variables $\mathbf{x}=\left( x_{1},\cdots ,x_{n}\right) $. Assume $Q(\mathbf{x})=\mathbf{0}$ has exactly
the multi-homogeneous B\'{e}zout's number of isolated solutions with
respect to this partition, and let
\begin{equation*}
H(\mathbf{x},t)=(1-t)\gamma Q(\mathbf{x})+tP(\mathbf{x})=\mathbf{0}
\end{equation*}%
where $t\in \left[ 0,1\right] $ and $\gamma \in \mathbb{C\setminus }\left\{
0\right\} $. If $\gamma =re^{i\theta }$ for some positive $r$, then for all
but finitely many $\theta $, Properties 1 and 2 hold.
\end{Theorem}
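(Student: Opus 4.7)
The plan is to move to the compact multi-projective space $X=\mathbb{P}^{k_{1}}\times\cdots\times\mathbb{P}^{k_{m}}$, where the multi-homogeneous B\'{e}zout theorem gives a sharp count of isolated solutions (with multiplicity) for systems with the prescribed multi-degrees. First I would homogenize each $p_{i}$ and $q_{i}$ with respect to the partition $(z_{1},\ldots,z_{m})$, producing sections $\bar{P}$ and $\bar{Q}$ of the line bundle $\mathcal{O}(d_{i1},\ldots,d_{im})$ on $X$, and interpret
\begin{equation*}
\bar{H}(\mathbf{x},t)=(1-t)\gamma\,\bar{Q}(\mathbf{x})+t\,\bar{P}(\mathbf{x})
\end{equation*}
as a family of sections parameterized by $(t,\gamma)$. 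The hypothesis that $Q$ attains exactly the B\'{e}zout number $B$ of isolated solutions means all its zeros in $X$ are simple, and by upper semicontinuity of intersection multiplicities the total count along a smooth fiber stays $B$.

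For Property 1 I would use a Bertini-type genericity argument. The \emph{bad set}
\begin{equation*}
\Sigma=\{(\mathbf{x},t,\gamma)\in X\times\mathbb{C}\times\mathbb{C}^{\ast}:\bar{H}(\mathbf{x},t)=\mathbf{0},\ D_{\mathbf{x}}\bar{H}\ \text{is rank-deficient}\}
\end{equation*}
is a proper Zariski-closed subvariety; its projection into $\mathbb{C}^{\ast}\times\mathbb{C}$ is contained in a hypersurface. Restricting $\gamma=re^{i\theta}$ with $r$ fixed and intersecting with the real segment $t\in[0,1)$ gives a real-algebraic set in the $\theta$-circle, hence a finite set (unless the whole circle is bad, which is excluded because $Q$ has only simple zeros at $t=0$). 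For $\theta$ outside this finite exceptional set the implicit function theorem locally parameterizes every branch of $\bar{H}^{-1}(\mathbf{0})$ by $t$, and compactness of $X$ extends each branch over all of $[0,1)$; the count of such branches equals the count of zeros of $\bar{Q}$, namely $B$.

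Property 2 then follows from a matching count at $t=1$. By the multi-homogeneous B\'{e}zout inequality, $\bar{P}=\mathbf{0}$ has at most $B$ isolated solutions (with multiplicity). Upper semicontinuity of fiber dimension forces each of them to be the limit, as $t\to 1^{-}$, of at least one smooth branch; applying the implicit function theorem at $(\bar{\mathbf{x}}_{\ast},1)$ and continuing in reverse shows every affine isolated root of $P$ is accessed by a path originating from a root of $Q$ at $t=0$. The main obstacle is ruling out ``hidden'' path failures: paths might appear to escape to infinity in affine coordinates, or an affine isolated solution of $\bar{P}$ might be absorbed into a positive-dimensional component sitting on the boundary divisor $X\setminus\mathbb{C}^{n}$ as $t\to 1^{-}$. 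Both pathologies define further proper algebraic subsets of the parameter $\gamma$, so both are excluded by generic $\theta$; it is this simultaneous genericity, combined with the compactness afforded by passing to $X$, that makes the argument work.
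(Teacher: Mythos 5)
The paper offers no proof of this statement: it is imported verbatim from the Sommese--Wampler monograph \cite{Sommese:2005} and used as a black box, so there is no internal argument to compare against. Your sketch does capture the two ingredients that underlie the standard proof --- compactification to the multi-projective variety $X$ so that paths cannot escape, and a Bertini-type genericity argument over the homotopy parameter --- so the overall architecture is sound.

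However, the step that extracts ``all but finitely many $\theta$'' from the statement that $\Sigma$ projects into a hypersurface $V\subset\mathbb{C}^{\ast}\times\mathbb{C}$ is not airtight. That hypersurface is a complex curve, i.e.\ a real surface in a four-real-dimensional space, and the set you slice it with, $\{|\gamma|=r\}\times[0,1)$, is also a real surface; the intersection of two real surfaces in $\mathbb{R}^{4}$ is not automatically finite, and the image of that intersection under $(\gamma,t)\mapsto\theta$ is, a priori, only a semi-algebraic subset of the circle (Tarski--Seidenberg), which could consist of arcs rather than points. Your appeal to ``real-algebraic, hence finite'' therefore does not close the argument. The standard repair is to observe that the homotopy only depends on the parameters through the projective ratio: writing $H(\mathbf{x},t)=\bigl((1-t)\gamma+t\bigr)\bigl((1-\tau)Q(\mathbf{x})+\tau P(\mathbf{x})\bigr)$ with $\tau=t/\bigl((1-t)\gamma+t\bigr)$ collapses the parameter space to a single copy of $\mathbb{C}$ (or $\mathbb{P}^{1}$). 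There the discriminant is a proper algebraic subset, hence a finite set $D$, and for fixed $r$ each $d\in D\setminus\{0,1\}$ is hit by the arc $\{\tau(t):t\in(0,1)\}$ for at most one value of $\theta$; this gives the finite exceptional set directly, and accessibility (Property~2) then follows from path continuation in the compact $X$ together with the count-matching argument you describe. So the idea is right, but the dimension bookkeeping needs the one-parameter reparameterization to become a proof.
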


\bigskip

For solving $P(\mathbf{x},\mathbf{y})=\mathbf{0}$, where $P(\mathbf{x},\mathbf{y})$ is defined in \eqref{eq3.8}, we consider the starting system

\begin{equation}
Q_{0}\left( \mathbf{x},\mathbf{y}\right) =\left\{
\begin{array}{l}
\left( \alpha _{1}^{\top}\mathbf{x}\right) \left( \beta _{1}^{\top}\mathbf{y}\right) \\
\left( \alpha _{2}^{\top}\mathbf{x}\right) \left( \beta _{2}^{\top}\mathbf{y}\right) \\
\text{ \ \ }\vdots \\
\left( \alpha _{I+J-2}^{\top}\mathbf{x}\right) \left( \beta _{I+J-2}^{\top}\mathbf{y}\right) \\
\mathbf{c}_{x}^{\top }\mathbf{x}-1 \\
\mathbf{c}_{y}^{\top }\mathbf{y}-1%
\end{array}%
\right.  \label{eq4.2}
\end{equation}%
where any $I$ vectors of $\left\{ \alpha _{1},\ldots ,\alpha_{I+J-2}\right\} $ and any $J$ vectors of $\left\{ \beta _{1},\ldots ,\beta _{I+J-2}\right\} $ are linearly independent.

\begin{Theorem}
\label{thm3.8} Let $P(\mathbf{x},\mathbf{y})$ and $Q_{0}(\mathbf{x},\mathbf{y})$ be defined as \eqref{eq3.8} and \eqref{eq4.2} respectively.
Then all the isolated zeros $(\mathbf{x},\mathbf{y})$ in $\mathbb{C}^{I+J}$
of $P(\mathbf{x},\mathbf{y})$ can be found by using the homotopy
\begin{equation}
H(\mathbf{x},\mathbf{y},t)=(1-t)\gamma Q_{0}\left( \mathbf{x},\mathbf{y}%
\right) +tP\left( \mathbf{x},\mathbf{y}\right) =\mathbf{0}\text{, }t\in \left[ 0,1%
\right]  \label{eq4.3}
\end{equation}%
for almost all $\gamma \in \mathbb{C\setminus }\left\{ 0\right\} $.
Moreover, $P(\mathbf{x},\mathbf{y})=\mathbf{0}$ has at most $M=\frac{\left( I-1+J-1\right) !}{\left(
I-1\right) !\left( J-1\right) !}$ isolated solutions.
\end{Theorem}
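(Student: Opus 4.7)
The plan is to apply Theorem \ref{thm3.7} with the partition of variables $(\mathbf{x},\mathbf{y})$ and verify the two ingredients it requires: compute the multi-homogeneous B\'ezout number $M$ attached to this partition, and show that the start system $Q_{0}(\mathbf{x},\mathbf{y})=\mathbf{0}$ has exactly $M$ isolated solutions. Once these are in place the homotopy $H(\mathbf{x},\mathbf{y},t)=(1-t)\gamma Q_{0}+tP$ automatically satisfies Properties 0, 1, 2, so every isolated zero of $P$ is an endpoint of some path emanating from a known zero of $Q_{0}$, which gives both the computational statement and the bound $M$ on the number of isolated solutions.

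First I would write down the multi-degrees with respect to the blocks $\mathbf{x}\in\mathbb{C}^{I}$ and $\mathbf{y}\in\mathbb{C}^{J}$. Each of the first $I+J-2$ equations $p_{j}=\mathbf{x}^{\top}\mathsf{U}_{j}\mathbf{y}$ is bilinear, of bi-degree $(1,1)$; the normalizations contribute bi-degrees $(1,0)$ and $(0,1)$. So with dummy variables $\lambda_{1},\lambda_{2}$ the relevant product is
\begin{equation*}
(\lambda_{1}+\lambda_{2})^{I+J-2}\,\lambda_{1}\,\lambda_{2},
\end{equation*}
and the coefficient of $\lambda_{1}^{I}\lambda_{2}^{J}$ is the coefficient of $\lambda_{1}^{I-1}\lambda_{2}^{J-1}$ in $(\lambda_{1}+\lambda_{2})^{I+J-2}$, namely $\binom{I+J-2}{I-1}=M$.

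Next I would count the isolated zeros of $Q_{0}$. Each factored equation $(\alpha_{j}^{\top}\mathbf{x})(\beta_{j}^{\top}\mathbf{y})=0$ forces one of the two linear factors to vanish, so any solution corresponds to a subset $S\subseteq\{1,\ldots,I+J-2\}$ specifying which indices $j$ contribute $\alpha_{j}^{\top}\mathbf{x}=0$ (the remaining indices contribute $\beta_{j}^{\top}\mathbf{y}=0$). Together with the two normalizations, $\mathbf{x}$ is constrained by $|S|+1$ linear equations and $\mathbf{y}$ by $(I+J-2-|S|)+1$ linear equations. By the assumption that any $I$ of the $\alpha_{j}$ are linearly independent and any $J$ of the $\beta_{j}$ are linearly independent (together with generic choice of $\mathbf{c}_{x},\mathbf{c}_{y}$), the system admits a unique solution exactly when $|S|=I-1$ (hence $|S^{c}|=J-1$), and is inconsistent otherwise. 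The number of admissible subsets is $\binom{I+J-2}{I-1}=M$, and each gives rise to a single regular zero; hence $Q_{0}=\mathbf{0}$ has precisely $M$ isolated solutions, which matches the B\'ezout count.

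With these two facts the hypotheses of Theorem \ref{thm3.7} are satisfied, so for all but finitely many $\theta$ (where $\gamma=re^{i\theta}$) the paths defined by $H=\mathbf{0}$ are smooth on $[0,1)$ and every isolated zero of $P$ lies at the endpoint of such a path. Since there are only $M$ starting points, $P=\mathbf{0}$ has at most $M$ isolated zeros, finishing the proof. The only place where real care is needed is the counting of zeros of $Q_{0}$: one has to argue non-existence for $|S|\neq I-1$ and regularity (simple zero) for each admissible $S$, both of which follow cleanly from the linear independence hypotheses on $\{\alpha_{j}\}$, $\{\beta_{j}\}$ together with the genericity of $\mathbf{c}_{x}$ and $\mathbf{c}_{y}$.
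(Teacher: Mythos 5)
Your proposal is correct and follows essentially the same route as the paper: partition the variables into $(\mathbf{x})$ and $(\mathbf{y})$, compute the multi-homogeneous B\'ezout number $M=\binom{I+J-2}{I-1}$ as the coefficient of $\lambda_1^I\lambda_2^J$ in $(\lambda_1+\lambda_2)^{I+J-2}\lambda_1\lambda_2$, count $Q_0$'s solutions, and invoke Theorem~\ref{thm3.7}. The only difference is presentational: you fold the full counting argument for $Q_0$ (admissibility exactly when $|S|=I-1$, inconsistency otherwise, regularity via genericity) into the proof, whereas the paper states the B\'ezout count in the proof and defers the explicit construction of $Q_0$'s $M$ solutions to the remark immediately following the theorem.
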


\begin{proof} It is sufficient to verify that $Q_{0}\left( \mathbf{x},%
\mathbf{y}\right) $ satisfies all the assumptions of Theorem \ref{thm3.7}. Partition
the variables $\left( \mathbf{x},\mathbf{y}\right) $ into two groups: $%
\left( \mathbf{x}\right) $ and $\left( \mathbf{y}\right) $, we can see that
each of the first $I+J-2$ equations in \eqref{eq3.8} and \eqref{eq4.2} has
degree $1$ in $\left( \mathbf{x}\right) $ and $\left( \mathbf{y}\right) $.
In addition, the equation $\mathbf{c}_{x}^{\top }\mathbf{x}-1$ has degree $1$
in $\left( \mathbf{x}\right) $ and degree $0$ in $\left( \mathbf{y}\right)$, while the equation $\mathbf{c}_{y}^{\top }\mathbf{y}-1$ has degree $0$ in $\left( \mathbf{x}\right) $ and degree $1$ in $\left( \mathbf{y}\right)$.
Hence, $P$ and $Q_{0}$ have the same multi-homogeneous B\'{e}zout's
number and the number is the coefficient of $\lambda _{1}^{I}\lambda
_{2}^{J} $ in the polynomial $\left( 1\cdot \lambda _{1}+1\cdot \lambda
_{2}\right) ^{I+J-2}\left( 1\cdot \lambda _{1}+0\cdot \lambda _{2}\right)
\left( 0\cdot \lambda _{1}+1\cdot \lambda _{2}\right) $. By the direct calculation, we have this coefficient is equal to $M=\frac{\left( I-1+J-1\right) !}{%
\left( I-1\right) !\left( J-1\right) !}$, which is the number of isolated solutions of $Q_{0}(\mathbf{x},\mathbf{y})=\mathbf{0}$. The accessibility property implies that the number of isolated solutions of $P(\mathbf{x},\mathbf{y})=\mathbf{0}$ is at most $M$.
\end{proof}

\begin{Remark}\label{rem3.3}
From Theorems \ref{thm3.4} and \ref{thm3.8}, we have $M\geqslant s\geqslant R$, where  $R={\rm rank}(\mathcal{A})$ and $s$ is the number of vectors in $S_\mathbb{R}$ defined in \eqref{realsolset}. Assume that $R=R^{*}(=IJ-I-J+2)$. We obtain that  $M=R^*=R$ if $I\in \{1,2\}$ or $J\in \{1,2\}$. Hence, we have following two results.
\begin{itemize}
\item[(i)]  If ${\rm rank}(\mathcal{A})=R^*$ and $I\in \{1,2\}$ or $J\in \{1,2\}$, then  $s=R$ and hence $\mathcal{A}$ has a unique CP decomposition by Theorem \ref{thm3.5}.
\item[(ii)] When $I=1$ (i.e., $\mathcal{A}$ is a matrix), the critical number $R^*=1$. By Theorem \ref{thm_under} and assertion $(i)$, we conclude that  the CP decomposition of a matrix is unique only when $R = 1$.
\end{itemize}
\end{Remark}

Theorem \ref{thm3.8} suggests us that the polynomial system $P(\mathbf{x},\mathbf{y})=\mathbf{0}$ can be solved by using homotopy continuation method with homotopy in \eqref{eq4.3}. The solutions of the starting system $Q_{0}(\mathbf{x},\mathbf{y})=\mathbf{0}$ are easily obtained because of the linear product form. In $%
Q_{0}(\mathbf{x},\mathbf{y})$, any $I-1$ linear forms in $\mathbf{x}$
together with the equation $\mathbf{c}_{x}^{\top }\mathbf{x}-1$ form an $I\times I$ nonsingular linear system in $\mathbf{x}$ :
\begin{equation*}
\left[
\begin{array}{ccccc}
\alpha _{i_{1}} & \alpha _{i_{2}} & \cdots & \alpha _{i_{I-1}} & \mathbf{c}%
_{x}%
\end{array}%
\right] ^{\top}\mathbf{x}=\left[
\begin{array}{ccccc}
0 & 0 & \cdots & 0 & 1%
\end{array}%
\right] ^{\top},
\end{equation*}%
and the remaining $J-1$ linear forms in $\mathbf{y}$ together with $\mathbf{c}_{y}^{\top }\mathbf{y}-1$ form a $J\times J$ nonsingular linear system in $\mathbf{y}$. The solutions of these two linear systems determine a solution
of $Q_{0}(\mathbf{x},\mathbf{y})=\mathbf{0}$. Therefore, the starting system has
exactly $M$ solutions. The typical strategy to obtain the solution of $P(\mathbf{x},\mathbf{y})=\mathbf{0}$ is using the prediction-correction method: Let $\left( \mathbf{x}_{0},\mathbf{y}_{0}\right) $ be a solution of $Q_{0}\left(\mathbf{x},\mathbf{y}\right) =H(\mathbf{x},\mathbf{y},0)=\mathbf{0}$, and let $t_{0}=0 $.

\textbf{Prediction step}: Compute the tangent vector $\frac{d\left( \mathbf{x%
},\mathbf{y}\right) }{dt}$ to $H(\mathbf{x},\mathbf{y},t)=\mathbf{0}$ at $t_{k}$ by
solving the linear system%
\begin{equation*}
\frac{dH}{d\left( \mathbf{x},\mathbf{y}\right) }(\mathbf{x}_{k},\mathbf{y}%
_{k},t_{k})\frac{d\left( \mathbf{x},\mathbf{y}\right) }{dt}=-\frac{dH}{dt}(%
\mathbf{x}_{k},\mathbf{y}_{k},t_{k})\text{ for }\frac{d\left( \mathbf{x},%
\mathbf{y}\right) }{dt}\text{.}
\end{equation*}%
Then compute the approximate solution $(\mathbf{\tilde{x}},\mathbf{\tilde{y}}%
)$ to $(\mathbf{x}_{k+1},\mathbf{y}_{k+1})$ by
\begin{equation*}
(\mathbf{\tilde{x}},\mathbf{\tilde{y}})=(\mathbf{x}_{k},\mathbf{y}_{k})+h%
\frac{d\left( \mathbf{x},\mathbf{y}\right) }{dt}\text{, \ \ }t_{k+1}=t_{k}+h
\end{equation*}%
where $h$ is the stepsize.

\textbf{Correction step}: Use Newton's iterations. Initialize $(\mathbf{x}%
^{\left( 0\right) },\mathbf{y}^{\left( 0\right) })=(\mathbf{\tilde{x}},%
\mathbf{\tilde{y}})$. For $i=0,1,2,\cdots $, compute
\begin{equation*}
(\mathbf{x}^{\left( i+1\right) },\mathbf{y}^{\left( i+1\right) })=(\mathbf{x}%
^{\left( i\right) },\mathbf{y}^{\left( i\right) })-\left[ \frac{dH}{d\left(
\mathbf{x},\mathbf{y}\right) }(\mathbf{x}^{\left( i\right) },\mathbf{y}%
^{\left( i\right) },t_{k+1})\right] ^{-1}H(\mathbf{x}^{\left( i\right) },%
\mathbf{y}^{\left( i\right) },t_{k+1})
\end{equation*}%
until $\left\Vert H(\mathbf{x}^{\left( N\right) },\mathbf{y}^{\left(
N\right) },t_{k+1})\right\Vert $ is smaller than a given tolerance enough.
Then let $(\mathbf{x}_{k+1},\mathbf{y}_{k+1})=(\mathbf{x}^{\left( N\right) },%
\mathbf{y}^{\left( N\right) })$.

The prediction-correction iteration terminates until $t_{k_{0}}=1$ for some $%
k_{0}\in \mathbb{N}$. At this step, $(\mathbf{x}_{k_{0}},\mathbf{y}_{k_{0}})$
is the solution of $P(\mathbf{x},\mathbf{y})=\mathbf{0}$. The algorithm for computing the CP decomposition is summarized in Algorithm 1.

\begin{center}
\renewcommand{\arraystretch}{1.36}
\textbf{Algorithm~1}
\begin{tabular}[c]{|llll|}\hline  &&& \\
   & \multicolumn{1}{l}{\textsf{Input}:} &  \multicolumn{2}{l}{$\mathcal{A}\in \mathbb{R}^{I\times J\times K}$ with rank$\left(
\mathcal{A}\right) =R\leqslant \min \left\{ K,R^{\ast }\right\}$.} \\
   & \multicolumn{1}{l}{\textsf{Output}:} & \multicolumn{2}{l}{Factor matrices $\widehat{X}\in \mathbb{R}^{I\times R}$, $\widehat{Y}\in \mathbb{R}%
^{J\times R}$, and $\widehat{Z}\in \mathbb{R}^{K\times R}$.}  \\
   &&1.& \textsf{Form} the matricization $T\in \mathbb{R}^{IJ\times K}$ of $\mathcal{%
A}$ as \eqref{T}.\\
&&2.&  \textsf{Compute} the full rank factorization $T=EF^{\top }$, where $E\in
\mathbb{R}^{IJ\times R}$\\
&& &  has orthonormal columns and $F\in \mathbb{R}%
^{K\times R}$ has full column rank.\\
&&3.&  \textsf{Compute} a basis $\left\{ \mathbf{u}_{1},\mathbf{u}_{2},\cdots ,%
\mathbf{u}_{IJ-R}\right\} $ of nullspace $\mathcal{N}(E^{\top })$ \\
&&&    and
construct the square polynomial system \eqref{eq3.8}.\\
&&4.&  \textsf{Find} the real solution set $S_\mathbb{R}$ of system \eqref{eq3.8}.\\
&&5.&  \textsf{Find} the $R$ real solutions $\left\{ (\hat{\mathbf{x}}_{i},\hat{\mathbf{y}}%
_{i})\right\} _{i=1}^{R}\subseteq S_\mathbb{R}$ which satisfy  \\
&&&   $q_i(\mathbf{x}%
,\mathbf{y})=0$ (defined in \eqref{drop}) for $i=I+J-1,\cdots,IJ-R$.\\
&&6.&  \textsf{Stack} $\left\{ (\hat{\mathbf{x}}_{i},\hat{\mathbf{y}}_{i})\right\} _{i=1}^{R}$
in factor matrices $\widehat{X}\in \mathbb{R}^{I\times R}$and $\widehat{Y}\in \mathbb{R}%
^{J\times R}$:\\
&&&$\widehat{X} =[\hat{\mathbf{x}}_{1},\hat{\mathbf{x}}_{2},\cdots,\hat{\mathbf{x}}_{R}],
 \ \ \widehat{Y} =[\hat{\mathbf{y}}_{1},  \hat{\mathbf{y}}_{2},  \cdots,  \hat{\mathbf{y}}_{R}].$\\
&&7.&  $W=E^{\top}\cdot ( \widehat{X}\odot \widehat{Y}) \in \mathbb{R}^{R\times R}$ and $\widehat{Z}=F\cdot W^{-\top}\in \mathbb{R}^{K\times R}$.\\
   \hline
\end{tabular}
\end{center}

\textbf{Example 1}: Consider a $3\times 3\times 6$ tensor $\mathcal{A}$
generated by
\begin{equation}
\mathcal{A}=\sum_{r=1}^{4}\mathbf{x}_{r}\circ \mathbf{y}%
_{r}\circ \mathbf{z}_{r},  \label{ex1}
\end{equation}%
where
\begin{eqnarray*}
X &=&[\mathbf{x}_{1},\mathbf{x}_{2},\mathbf{x}_{3},%
\mathbf{x}_{4}]=\left[
\begin{array}{rrrr}
0 & 1 & \frac{1}{2} & \frac{1}{3} \\
1 & 0 & 1 & \frac{1}{2} \\
\frac{1}{2} & 1 & 0 & 1%
\end{array}%
\right] \in \mathbb{R}^{3\times 4}, \\
Y &=&[\mathbf{y}_{1},\mathbf{y}_{2},\mathbf{y}_{3},%
\mathbf{y}_{4}]=\left[
\begin{array}{rrrr}
\frac{1}{2} & 1 & 0 & 1 \\
1 & 0 & 1 & \frac{1}{2} \\
0 & 1 & \frac{1}{2} & -\frac{1}{3}%
\end{array}%
\right] \in \mathbb{R}^{3\times 4},\text{ and} \\
Z &=&[\mathbf{z}_{1},\mathbf{z}_{2},\mathbf{z}_{3},%
\mathbf{z}_{4}]=\left[
\begin{array}{rrrr}
1 & 1 & 1 & 1 \\
-1 & 1 & 1 & 1 \\
1 & -1 & 1 & 1 \\
1 & 1 & -1 & 1 \\
1 & 1 & 1 & -1 \\
-1 & -1 & 1 & 1%
\end{array}%
\right] \in \mathbb{R}^{6\times 4}\text{.}
\end{eqnarray*}%
The rank of tensor $\mathcal{A}$ is $4$, smaller than $\min \left\{ R^{\ast
},K\right\} =\min \left\{ 5,6\right\} $. We compute the CP decomposition by
\textbf{Algorithm 1}. In the first two steps, we have the matricization whose rank is
$4$ and whose full rank factorization $T=EF^{\top}$, where $E\in \mathbb{R}%
^{9\times 4}$ and $F\in \mathbb{R}^{6\times 4}$. In the third step, a basis
of nullspace of $E^{\top}$ can be chosen as $\left\{ \mathbf{u}_{1},\mathbf{u}%
_{2},\mathbf{u}_{3},\mathbf{u}_{4},\mathbf{u}_{5}\right\} $, where
\begin{eqnarray*}
\mathbf{u}_{1}^{\top} &=&\left[
\begin{array}{rrrrrrrrr}
13 & 4 & -8 & 0 & 0 & 0 & 0 & 0 & -5%
\end{array}%
\right] ^{\top}, \\
\mathbf{u}_{2}^{\top} &=&\left[
\begin{array}{rrrrrrrrr}
4 & 2 & -4 & 5 & 0 & 0 & 0 & -5 & 0%
\end{array}%
\right] ^{\top}, \\
\mathbf{u}_{3}^{\top} &=&\left[
\begin{array}{rrrrrrrrr}
68 & 19 & -38 & 15 & 0 & 0 & -30 & 0 & 0%
\end{array}%
\right] ^{\top}, \\
\mathbf{u}_{4}^{\top} &=&\left[
\begin{array}{rrrrrrrrr}
0 & 1 & 0 & 0 & 0 & -1 & 0 & 0 & 0%
\end{array}%
\right] ^{\top}, \\
\mathbf{u}_{5}^{\top} &=&\left[
\begin{array}{rrrrrrrrr}
26 & 33 & -26 & 20 & -10 & 0 & 0 & 0 & 0%
\end{array}%
\right] ^{\top}.
\end{eqnarray*}%
In step 4, the square polynomial system generated by the basis is
\begin{equation*}
P\left(\mathbf{x},\mathbf{y}\right) =\left\{
\begin{array}{l}
13y_{1}x_{1}+4y_{1}x_{2}-8y_{1}x_{3}-5y_{3}x_{3}=0, \\
4y_{1}x_{1}+2y_{1}x_{2}-4y_{1}x_{3}+5y_{2}x_{1}-5y_{3}x_{2}=0, \\
68y_{1}x_{1}+19y_{1}x_{2}-38y_{1}x_{3}+15y_{2}x_{1}-30y_{3}x_{1}=0, \\
y_{1}x_{2}-y_{2}x_{3}=0, \\
x_{1}+x_{2}+x_{3}-1=0, \\
y_{1}+y_{2}+y_{3}-1=0\text{.}%
\end{array}%
\right.
\end{equation*}%
where $(\mathbf{x},\mathbf{y})=(x_{1},x_{2},x_{3},y_{1},y_{2},y_{3})$.
This system has $6$ real solutions%
\begin{equation*}
\begin{array}{l}
\mathcal{S}_{\mathbb{R}}=\{\left( 0,\frac{2}{3},\frac{1}{3},\frac{1}{3},\frac{2}{3}%
,0\right) ,\left( \frac{1}{2},0,\frac{1}{2},\frac{1}{2},0,%
\frac{1}{2}\right) ,\left( \frac{1}{3},\frac{2}{3},0,0,\frac{2%
}{3},\frac{1}{3}\right) , \\
\text{ \ \ \ \  }\left( \frac{2}{11},\frac{3}{11},\frac{6}{11},\frac{%
6}{7},\frac{3}{7},-\frac{2}{7}\right) ,\left( \frac{19}{45},%
\frac{26}{135},\frac{52}{135},\frac{20}{63},\frac{10}{63},\frac{11}{21}%
\right) ,\left( 0,1,0,0,1,0\right) \}\text{.}%
\end{array}%
\end{equation*}%
In step 5, only first four solutions of $\mathcal{S}_{\mathbb{R}}$ satisfy the equation $%
26y_{1}x_{1}+33y_{1}x_{2}-26y_{1}x_{3}+20y_{2}x_{1}-10y_{2}x_{2}=0$, which
is generated from $\mathbf{u}_{5}$. In step 6, stack these solutions in
matrix factors%
\begin{eqnarray*}
\widehat{X} &=&[\hat{\mathbf{x}}_{1},\hat{\mathbf{x}}_{2},\hat{\mathbf{x}}_{3},\hat{\mathbf{x}}_{4}]=\left[
\begin{array}{rrrr}
0 & \frac{1}{2} & \frac{1}{3} & \frac{2}{11} \\
\frac{2}{3} & 0 & \frac{2}{3} & \frac{3}{11} \\
\frac{1}{3} & \frac{1}{2} & 0 & \frac{6}{11}%
\end{array}%
\right] \text{ } \\
\text{and }\widehat{Y} &=&[\hat{\mathbf{y}}_{1},\hat{\mathbf{y}}_{2},\hat{\mathbf{y}}_{3},\hat{\mathbf{y}}%
_{4}]=\left[
\begin{array}{rrrr}
\frac{1}{3} & \frac{1}{2} & 0 & \frac{6}{7} \\
\frac{2}{3} & 0 & \frac{2}{3} & \frac{3}{7} \\
0 & \frac{1}{2} & \frac{1}{3} & -\frac{2}{7}%
\end{array}%
\right] \text{.}
\end{eqnarray*}%
Following step 7 and 8, we have matrix factor
\begin{equation*}
\widehat{Z}=[\hat{\mathbf{z}}_{1},\hat{\mathbf{z}}_{2},\hat{\mathbf{z}}_{3},\hat{\mathbf{z}}_{4}]=\left[
\begin{array}{rrrr}
\frac{9}{4} & 4 & \frac{9}{4} & \frac{77}{36} \\
-\frac{9}{4} & 4 & \frac{9}{4} & \frac{77}{36} \\
\frac{9}{4} & -4 & \frac{9}{4} & \frac{77}{36} \\
\frac{9}{4} & 4 & -\frac{9}{4} & \frac{77}{36} \\
\frac{9}{4} & 4 & \frac{9}{4} & -\frac{77}{36} \\
-\frac{9}{4} & -4 & \frac{9}{4} & \frac{77}{36}%
\end{array}%
\right] \text{.}
\end{equation*}%
Note that $\hat{\mathbf{x}}_{1}=\frac{2}{3}\mathbf{x}_{1}$, $\hat{\mathbf{y}}_{1}=%
\frac{2}{3}\mathbf{y}_{1}$, and $\hat{\mathbf{z}}_{1}=\frac{9}{4}\mathbf{z}_{1}$.
Hence, rank one component $\hat{\mathbf{x}}_{1}\circ \hat{\mathbf{y}}%
_{1}\circ \hat{\mathbf{z}}_{1}=\mathbf{x}_{1}\circ \mathbf{y}_{1}\circ
\mathbf{z}_{1}$. Similarly, we have $\hat{\mathbf{x}}_{r}\circ \hat{\mathbf{y}}%
_{r}\circ \hat{\mathbf{z}}_{r}=\mathbf{x}_{r}\circ \mathbf{y}%
_{r}\circ \mathbf{z}_{r}$ for $r=1,2,3,4$. Therefore, the CP factorization of
tensor $\mathcal{A}$ is unique.

\begin{Remark}
\label{rem3.4} When tensor $\widetilde{\mathcal{A}}\in \mathbb{R}^{I\times
J\times K}$ may only be known with noise, i.e., $\widetilde{\mathcal{A}}=%
\mathcal{A}+\theta \frac{\mathcal{N}}{\|\mathcal{N}\|_F}$, where $R=\mathrm{%
rank}(\mathcal{A})$ and $\theta$ is the noise level, we hope to compute an
approximating CP decomposition of tensor $\widetilde{\mathcal{A}}$. Some
comments concerning the particular implementation of \textbf{Algorithm 1} are as
following:

\begin{itemize}
\item[(i)] In step 2, the factorization may be obtained by truncating small
singular value terms in singular value decomposition.
\item[(ii)] In step 5, if $R<R^{\ast }$, then the real solution $(\hat{\mathbf{x}}%
_{i},\hat{\mathbf{y}}_{i})\in S_\mathbb{R}$ may not exactly satisfy the $R^{\ast
}-R$ equations, $q_{i}(\mathbf{x},\mathbf{y})=0$ for $i=I+J-1,\cdots ,%
IJ-R$, defined in \eqref{drop}. Suppose that the real solution set $S_\mathbb{R}$ in \eqref{realsolset} has only $s$
vectors. From Theorem \ref{thm3.8}, we obtain that  $s\leqslant M=\frac{\left( I-1+J-1\right) !}{\left(
I-1\right) !\left( J-1\right) !}$.
\begin{itemize}
\item[(a)] If $s<R$, then \textbf{Algorithm 1} fails to get the approximating CP
decomposition.
\item[(b)] If $s\geqslant R$, then let $\delta ((\mathbf{x},\mathbf{y}))=\sqrt{%
\sum_{i=I+J-1}^{IJ-R}(q_{i}(\mathbf{x},\mathbf{y}))^{2}}$ and suppose
that
\begin{equation*}
\delta ((\hat{\mathbf{x}}_{1},\hat{\mathbf{y}}_{1}))\leqslant \delta ((\hat{\mathbf{x}}_{2},%
\hat{\mathbf{y}}_{2}))\leqslant \cdots \leqslant \delta ((\hat{\mathbf{x}}_{s},\hat{\mathbf{y}}_{s}))
\end{equation*}%
where $S_\mathbb{R}=\left\{ (\hat{\mathbf{x}}_{i},\hat{\mathbf{y}}_{i})\right\}
_{i=1}^{s}$. It is natural to chose $R$ real solutions $\left\{ (\hat{\mathbf{x}}_{i},\hat{\mathbf{y}}_{i})\right\} _{i=1}^{R}\subseteq S_\mathbb{R}$ having the
smallest values $\left\{ \delta ((\hat{\mathbf{x}}_{i},\hat{\mathbf{y}}_{i}))\right\}
_{i=1}^{R}$.
\end{itemize}
\end{itemize}
\end{Remark}

\section{The fourth-order tensor}
\label{sec4}

Now, we consider the CP decomposition of a fourth-order tensor $\mathcal{A}%
\in \mathbb{R}^{I\times J\times K\times L}$ with $\mathrm{rank}(\mathcal{A}%
)=R\leqslant L$. Suppose that the CP of the tensor $\mathcal{A}\in \mathbb{R}%
^{I\times J\times K\times L}$ is given by
\begin{align}  \label{4tensor}
\mathcal{A}=\sum_{r=1}^{R}\mathbf{x}_{r}\circ \mathbf{y}%
_{r}\circ \mathbf{z}_{r}\circ \mathbf{v}_{r},
\end{align}
where  for each $r\in \{1,2,\cdots, R\}$, $%
\mathbf{x}_{r}$, $\mathbf{y}_{r}$, $\mathbf{z}_{r}$ and $\mathbf{v}_{r}$ are
generic. Let  $X=[\mathbf{x}_{1},\mathbf{x}_{2},\cdots ,\mathbf{x}_{R}]\in \mathbb{R}%
^{I\times R}$, $Y=[\mathbf{y}_{1},\mathbf{y}_{2},\cdots ,\mathbf{y}_{R}]\in
\mathbb{R}^{J\times R}$, $Z=[\mathbf{z}_{1},\mathbf{z}_{2},\cdots ,\mathbf{z}_{R}]\in \mathbb{R}%
^{K\times R}$ and $V=[\mathbf{v}_{1},\mathbf{v}_{2},\cdots ,\mathbf{v}_{R}]\in
\mathbb{R}^{L\times R}$ be the factor matrices of $\mathcal{A}$, then $X$, $Y$, $Z$ and $Y$ are generic. Let the matricization
\begin{align}\label{T4}
T=[\mathrm{vec}(\mathcal{A}_{:::1}),\mathrm{vec}(\mathcal{A}_{:::2}),\cdots ,%
\mathrm{vec}(\mathcal{A}_{:::L})],
\end{align}
where $\mathrm{vec}(\mathcal{A}_{:::\ell })\equiv \mathrm{vec}([\mathrm{vec}(%
\mathcal{A}_{::1,\ell }),\mathrm{vec}(\mathcal{A}_{::2,\ell }),\cdots ,%
\mathrm{vec}(\mathcal{A}_{::K,\ell })])$ for $\ell =1,2,\cdots ,L$ and $X=[\mathbf{x}_{1},\mathbf{x}_{2},\cdots ,\mathbf{x}_{R}]\in \mathbb{R}%
^{I\times R}$, $Y=[\mathbf{y}_{1},\mathbf{y}_{2},\cdots ,\mathbf{y}_{R}]\in
\mathbb{R}^{J\times R}$, $Z=[\mathbf{z}_{1},\mathbf{z}_{2},\cdots ,\mathbf{z}_{R}]\in \mathbb{R}%
^{K\times R}$ and $V=[\mathbf{v}_{1},\mathbf{v}_{2},\cdots ,\mathbf{v}_{R}]\in
\mathbb{R}^{L\times R}$.
%\begin{equation*}
%\begin{array}{ll}
%X=[\mathbf{x}_{1},\mathbf{x}_{2},\cdots ,\mathbf{x}_{R}]\in \mathbb{R}%
%^{I\times R}, & Y=[\mathbf{y}_{1},\mathbf{y}_{2},\cdots ,\mathbf{y}_{R}]\in
%\mathbb{R}^{J\times R}, \\
%Z=[\mathbf{z}_{1},\mathbf{z}_{2},\cdots ,\mathbf{z}_{R}]\in \mathbb{R}%
%^{K\times R}, & V=[\mathbf{v}_{1},\mathbf{v}_{2},\cdots ,\mathbf{v}_{R}]\in
%\mathbb{R}^{L\times R},
%\end{array}%
%\end{equation*}%
Then we have
\begin{align}  \label{vectansor4}
T=(Z\odot Y\odot X) V^{\top }\in \mathbb{R}^{IJK\times L}.
\end{align}

Consider a factorization of $T$ of the form
\begin{align}  \label{EFdec4}
T=EF^{\top},
\end{align}
where $E\in \mathbb{R}^{IJK\times R}$ and $F\in \mathbb{R}^{L\times R}$ are
of full column rank. From \eqref{vectansor4}, \eqref{EFdec4} and Lemma \ref{lem2.2}, we have
\begin{align}  \label{YX4}
Z\odot Y\odot X=EW,
\end{align}
for some nonsingular $W\in \mathbb{R}^{R\times R}$. Our goal is to find an
invertible matrix $W$ such that \eqref{YX4} holds.

Since $E\in \mathbb{R}^{IJK\times R}$ is of full
column rank, the dimension of null space of $E^{\top}$, $\mathcal{N}%
(E^{\top})=\{\mathbf{u}\in \mathbb{R}^{IJK}|E^{\top}\mathbf{u}=0\}$, is $%
IJK-R$. The following theorem is the generalization of Theorem \ref{thm3.1}.

\begin{Theorem}
\label{thm4.1} Let $E\in \mathbb{R}^{IJK\times R}$, $\tilde{\mathbf{x}}\in
\mathbb{R}^{I}$, $\tilde{\mathbf{y}}\in \mathbb{R}^{J}$ and $\tilde{\mathbf{z}}\in \mathbb{R}^{K}$. Then $\tilde{\mathbf{z}}\otimes \tilde{\mathbf{y}}
\otimes \tilde{\mathbf{x}}$ belongs to the column space of $E$ if and only
if $(\tilde{\mathbf{x}},\tilde{\mathbf{y}},\tilde{\mathbf{z}})$ is a
solution of the cubic equation $\mathbf{u}^{\top }\left( \mathbf{z}\otimes
\mathbf{y}\otimes \mathbf{x}\right) =0$, where $\mathbf{u}$ is any vector in
$\mathcal{N}(E^{\top })$.
\end{Theorem}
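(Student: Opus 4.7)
The plan is to mirror the strategy used for Theorem \ref{thm3.1}, exploiting the orthogonal decomposition $\mathbb{R}^{IJK} = \mathrm{Col}(E) \oplus \mathcal{N}(E^{\top})$ together with the bilinear identity $\mathbf{u}^{\top}(\tilde{\mathbf{z}} \otimes \tilde{\mathbf{y}} \otimes \tilde{\mathbf{x}}) = 0$ being exactly the condition that a rank-one Kronecker vector annihilates $\mathbf{u}$.

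For the necessity direction, I would assume $\tilde{\mathbf{z}} \otimes \tilde{\mathbf{y}} \otimes \tilde{\mathbf{x}} \in \mathrm{Col}(E)$, so that there exists $\tilde{\mathbf{w}} \in \mathbb{R}^{R}$ with $\tilde{\mathbf{z}} \otimes \tilde{\mathbf{y}} \otimes \tilde{\mathbf{x}} = E\tilde{\mathbf{w}}$. For any $\mathbf{u} \in \mathcal{N}(E^{\top})$, one immediately gets
\begin{equation*}
\mathbf{u}^{\top}(\tilde{\mathbf{z}} \otimes \tilde{\mathbf{y}} \otimes \tilde{\mathbf{x}}) = \mathbf{u}^{\top} E \tilde{\mathbf{w}} = (E^{\top}\mathbf{u})^{\top}\tilde{\mathbf{w}} = 0,
\end{equation*}
which is the desired cubic identity. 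This is essentially the three-factor analogue of Lemma \ref{lem3.1}.

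For sufficiency, I would fix a basis $\{\mathbf{u}_{1}, \ldots, \mathbf{u}_{IJK-R}\}$ of $\mathcal{N}(E^{\top})$ and assume $\mathbf{u}_{i}^{\top}(\tilde{\mathbf{z}} \otimes \tilde{\mathbf{y}} \otimes \tilde{\mathbf{x}}) = 0$ for every $i$. Then $\tilde{\mathbf{z}} \otimes \tilde{\mathbf{y}} \otimes \tilde{\mathbf{x}}$ is orthogonal to every element of a spanning set of $\mathcal{N}(E^{\top})$, hence to all of $\mathcal{N}(E^{\top})$. Since $E$ has full column rank, $\mathrm{Col}(E) = \mathcal{N}(E^{\top})^{\perp}$, so $\tilde{\mathbf{z}} \otimes \tilde{\mathbf{y}} \otimes \tilde{\mathbf{x}} \in \mathrm{Col}(E)$. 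The only point that needs a line of care is verifying that the statement ``$\mathbf{u}$ is any vector in $\mathcal{N}(E^{\top})$'' translates to being orthogonal to the whole nullspace, which is immediate by linearity once one checks it against a basis.

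There is no real obstacle here; the proof is a direct transcription of the third-order argument (Lemma \ref{lem3.1} plus Theorem \ref{thm3.1}), using that Kronecker contraction $\mathbf{u}^{\top}(\mathbf{z} \otimes \mathbf{y} \otimes \mathbf{x})$ plays the role formerly held by $\mathbf{x}^{\top}\mathsf{U}\mathbf{y}$. The only structural difference is that the contraction is now cubic rather than bilinear, but this does not affect the duality argument at all, since both steps only use the orthogonality of $\mathrm{Col}(E)$ and $\mathcal{N}(E^{\top})$ in $\mathbb{R}^{IJK}$ and linearity of the pairing.
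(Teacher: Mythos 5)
Your proof is correct and is exactly the argument the paper has in mind: the paper states Theorem \ref{thm4.1} without proof, calling it ``the generalization of Theorem \ref{thm3.1},'' and your two directions reproduce the Lemma \ref{lem3.1}/Theorem \ref{thm3.1} duality argument verbatim in the three-factor setting. One tiny remark: the identity $\mathrm{Col}(E) = \mathcal{N}(E^{\top})^{\perp}$ holds for any matrix $E$ and does not actually require full column rank, so that hypothesis (while present in the paper's context) is not needed for the sufficiency step.
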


Let $\{\mathbf{u}_{1},\mathbf{u}_{2},\cdots ,\mathbf{u}_{IJK-R}\}$ be a
basis of $\mathcal{N}(E^{\top })$. We consider the system
\begin{align}  \label{eq4.6}
\left\{
\begin{array}{c}
\mathbf{u}_{1}^{\top }\left( \mathbf{z}\otimes \mathbf{y}\otimes \mathbf{x}%
\right) =0, \\
\vdots  \\
\mathbf{u}_{IJK-R}^{\top }\left( \mathbf{z}\otimes \mathbf{y}\otimes
\mathbf{x}\right) =0, \\
\mathbf{c}_{x}^{\top }\mathbf{x}=1, \\
\mathbf{c}_{y}^{\top }\mathbf{y}=1, \\
\mathbf{c}_{z}^{\top }\mathbf{z}=1, \\
\end{array}%
\right.
\end{align}
where $\mathbf{c}_{x}\in \mathbb{R}^{I}$, $\mathbf{c}_{y}\in \mathbb{R}^{J}$
and $\mathbf{c}_{z}\in \mathbb{R}^{K}$ are randomly generated. The system %
\eqref{eq4.6} has $IJK-R+3$ polynomial equations in $I+J+K$
unknowns, where $R=\mathrm{rank}(\mathcal{A})$. Define a critical number
\begin{align*}  %\label{Rstar4}
R^*=IJK-I-J-K+3.
\end{align*}
If $R=R^{\ast }$ then \eqref{eq4.6} has $I+J+K$ polynomial equations in $%
I+J+K$ unknowns. Note that the critical number $R^*$ is the same number defined in \eqref{Rstar} when we consider an $(I\times J\times K\times L)$ tensor.

%When a fourth-order tensor $\mathcal{A}\in \mathbb{R}^{I\times J\times
%K\times L}$ has a CP decomposition as in \eqref{4tensor} with $R^{\ast
%}<R\leqslant L$. The system \eqref{eq4.6} is an underdetermined system.
%Hence, \eqref{eq4.6} has infinitely many real solutions. Similar conclusion
%of Theorem \ref{thm_under} can be obtained, i.e., $\mathcal{A}$ has
%infinitely many CP decompositions generically.

When a fourth-order tensor $\mathcal{A}\in \mathbb{R}^{I\times J\times
K\times L}$ has a CP decomposition as in \eqref{4tensor} with $R\leqslant
\min \{L,R^{\ast }\}$. Let
\begin{align}  \label{eq4.8}
P(\mathbf{x},\mathbf{y},\mathbf{z})=\left[
\begin{array}{c}
p_{1}(\mathbf{x},\mathbf{y},\mathbf{z}) \\
\vdots  \\
p_{I+J+K-3}(\mathbf{x},\mathbf{y},\mathbf{z}) \\
p_{I+J+K-2}(\mathbf{x},\mathbf{y},\mathbf{z}) \\
p_{I+J+K-1}(\mathbf{x},\mathbf{y},\mathbf{z}) \\
p_{I+J+K}(\mathbf{x},\mathbf{y},\mathbf{z}) \\
\end{array}%
\right] \equiv \left[
\begin{array}{c}
\mathbf{u}_{1}^{\top }\left( \mathbf{z}\otimes \mathbf{y}\otimes \mathbf{x}%
\right)  \\
\vdots  \\
\mathbf{u}_{I+J+K-3}^{\top }\left( \mathbf{z}\otimes \mathbf{y}\otimes
\mathbf{x}\right)  \\
\mathbf{c}_{x}^{\top }\mathbf{x}-1 \\
\mathbf{c}_{y}^{\top }\mathbf{y}-1 \\
\mathbf{c}_{z}^{\top }\mathbf{z}-1%
\end{array}%
\right] .
\end{align}
If $R<R^{\ast }$, the system $P(\mathbf{x},\mathbf{y},\mathbf{z})=\mathbf{0}$ is
obtained by dropping $R^{\ast }-R$ equations,
\begin{align}\label{drop4}
q_{i}(\mathbf{x},\mathbf{y},\mathbf{z})=\mathbf{u}_{i}^{\top }\left(
\mathbf{z}\otimes \mathbf{y}\otimes \mathbf{x}\right) =0,\text{ for }%
i=I+J+K-2,\cdots ,IJK-R,
\end{align}
of system \eqref{eq4.6}. From \eqref{4tensor}, \eqref{YX4} and Theorem \ref%
{thm4.1}, we know that
\begin{align}  \label{scal_sol4}
(\hat{\mathbf{x}}_{r},\hat{\mathbf{y}}_{r})=\left( \frac{1}{\mathbf{c}%
_{x}^{\top }\mathbf{x}_{r}}\mathbf{x}_{r},\frac{1}{\mathbf{c}_{y}^{\top }%
\mathbf{y}_{r}}\mathbf{y}_{r},\frac{1}{\mathbf{c}_{z}^{\top }\mathbf{z}_{r}}%
\mathbf{z}_{r}\right) ,\text{ for }r=1,2,\cdots ,R,
\end{align}
are real solutions of $P(\mathbf{x},\mathbf{y},\mathbf{z})=\mathbf{0}$, where $\mathbf{x}%
_{r}$, $\mathbf{y}_{r}$ and $\mathbf{z}_{r}$ are given in \eqref{4tensor}.
Next, we will show that all real solutions of $P(\mathbf{x},\mathbf{y},%
\mathbf{z})=\mathbf{0}$ are isolated, generically. %The following lemma is useful in proving Theorem \ref{thm4.2}.

\begin{Theorem}
\label{thm4.2}  Suppose that $\mathcal{A}\in \mathbb{R}^{I\times J\times
K\times L}$ has a CP decomposition as in \eqref{4tensor} with $R\leqslant R^*$
and $P(\mathbf{x},\mathbf{y},\mathbf{z})$ has the form in \eqref{eq4.8},
where $\{\mathbf{u}_j\}_{j=1}^{I+J+K-3}$ is an arbitrary linearly independent
set of $\mathcal{N}(E^{\top})$ and $\mathbf{c}_x\in \mathbb{R}^I$, $\mathbf{c%
}_y\in \mathbb{R}^J$, $\mathbf{c}_z\in \mathbb{R}^K$ are randomly generated. Let $\{(\hat{\mathbf{x}}_r,\hat{\mathbf{y}}_r,\hat{\mathbf{z}}_r)\}_{r=1}^R$
be defined in \eqref{scal_sol4}.
Then for each $r\in \{1,\ldots, R\}$, $(\hat{\mathbf{x}}_r,\hat{\mathbf{y}}_r,\hat{\mathbf{z}}_r)$ is
an isolated solution of $P(\mathbf{x},\mathbf{y},\mathbf{z})=\mathbf{0}$, generically.
%
%\begin{itemize}
%\item[(i).] $(\hat{\mathbf{x}}_r,\hat{\mathbf{y}}_r,\hat{\mathbf{z}}_r)$ is
%an isolated solution of $P(\mathbf{x},\mathbf{y},\mathbf{z})=0$, generically;
%
%\item[(ii).] if $(\hat{\mathbf{x}},\hat{\mathbf{y}},\hat{\mathbf{z}})$ is a
%real solution of $P(\mathbf{x},\mathbf{y},\mathbf{z})=0$, then $(\hat{%
%\mathbf{x}},\hat{\mathbf{y}},\hat{\mathbf{z}})$ is isolated, generically.
%\end{itemize}
\end{Theorem}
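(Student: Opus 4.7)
The proof will follow the template of Theorem~\ref{thm3.4}, with the two-index slice $\mathsf{U}_j\in\mathbb{R}^{I\times J}$ replaced by the three-way array obtained by reshaping $\mathbf{u}_j\in\mathbb{R}^{IJK}$, and with part~(i) of Lemma~\ref{lem2.2} replaced by part~(ii). By relabeling the rank-one components it suffices to treat $r=1$, so the task reduces to showing that $DP(\hat{\mathbf{x}}_1,\hat{\mathbf{y}}_1,\hat{\mathbf{z}}_1)\in\mathbb{R}^{(I+J+K)\times(I+J+K)}$ is invertible.

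First I would formulate the obvious three-factor analog of Lemma~\ref{lem3.2}, which allows invertible changes of variables $Q_x,Q_y,Q_z$ in each factor and correspondingly transforms the coefficient arrays. Using this freedom we may assume $\hat{\mathbf{x}}_1=[1,0,\ldots,0]^{\top}$, $\hat{\mathbf{y}}_1=[1,0,\ldots,0]^{\top}$, $\hat{\mathbf{z}}_1=[1,0,\ldots,0]^{\top}$. Writing the three-way array obtained from $\mathbf{u}_j$ as $U_j[i,j',k]$, the equation $p_j(\hat{\mathbf{x}}_1,\hat{\mathbf{y}}_1,\hat{\mathbf{z}}_1)=0$ forces $U_j[1,1,1]=0$, so the first $I+J+K-3$ rows of $DP$ vanish in the three columns indexed by $x_1,y_1,z_1$. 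Since $\mathbf{c}_x,\mathbf{c}_y,\mathbf{c}_z$ are randomly generated, a cofactor expansion along the three normalization rows reduces invertibility of $DP$ to invertibility of the square matrix
\begin{align*}
\Phi=\begin{bmatrix}\phi_1^{x\top}&\phi_1^{y\top}&\phi_1^{z\top}\\ \vdots&\vdots&\vdots\\ \phi_{I+J+K-3}^{x\top}&\phi_{I+J+K-3}^{y\top}&\phi_{I+J+K-3}^{z\top}\end{bmatrix}\in\mathbb{R}^{(I+J+K-3)\times(I+J+K-3)},
\end{align*}
where $\phi_j^{x}=(U_j[2,1,1],\ldots,U_j[I,1,1])^{\top}$, $\phi_j^{y}=(U_j[1,2,1],\ldots,U_j[1,J,1])^{\top}$, and $\phi_j^{z}=(U_j[1,1,2],\ldots,U_j[1,1,K])^{\top}$ collect the entries of $U_j$ along the three coordinate axes through the origin.

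The main step is to prove that $\Phi$ is generically invertible. Splitting $\hat{\mathbf{x}}_r=[\hat{x}_{r,1},\hat{\mathbf{x}}_{r,2}^{\top}]^{\top}$, and analogously for $\hat{\mathbf{y}}_r,\hat{\mathbf{z}}_r$, and expanding the identity $0=\mathbf{u}_j^{\top}(\hat{\mathbf{z}}_r\otimes\hat{\mathbf{y}}_r\otimes\hat{\mathbf{x}}_r)$ for $r=2,\ldots,R$ into the eight blocks determined by whether each of $\hat{x}_{r,1},\hat{y}_{r,1},\hat{z}_{r,1}$ or its tail is used, the $U_j[1,1,1]$-block vanishes and the three ``axis-slice'' blocks move to the right-hand side. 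The remaining ``two-axis'' and ``bulk'' entries of $U_j$ can then be packaged into a single vector $\mathbf{w}_j\in\mathbb{R}^{IJK-I-J-K+2}$ satisfying
\begin{align*}
\Theta\,\mathbf{w}_j=-\mathbf{b}_j,\qquad r=2,\ldots,R,
\end{align*}
where $\mathbf{b}_j\in\mathbb{R}^{R-1}$ depends only on $(\phi_j^{x},\phi_j^{y},\phi_j^{z})$, and
\begin{align*}
\Theta=\bigl[\Phi(\hat{\mathbf{x}}_2,\hat{\mathbf{y}}_2,\hat{\mathbf{z}}_2),\ldots,\Phi(\hat{\mathbf{x}}_R,\hat{\mathbf{y}}_R,\hat{\mathbf{z}}_R)\bigr]^{\top}\in\mathbb{R}^{(R-1)\times(IJK-I-J-K+2)}
\end{align*}
is built from exactly the map $\Phi$ of Lemma~\ref{lem2.2}(ii). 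Since $R-1\leq R^{\ast}-1=IJK-I-J-K+2$ and the vectors $\hat{\mathbf{x}}_r,\hat{\mathbf{y}}_r,\hat{\mathbf{z}}_r$ are generic, Lemma~\ref{lem2.2}(ii) yields $\mathrm{rank}(\Theta)=R-1$, so the system is solvable for every prescribed choice of axis slices $(\phi_j^{x},\phi_j^{y},\phi_j^{z})$.

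Combined with the three-factor analog of Lemma~\ref{lem3.3}, this shows that the projection $\pi\colon\mathcal{N}(E^{\top})\to\mathbb{R}^{I+J+K-3}$ sending $\mathbf{u}\mapsto(\phi^{x},\phi^{y},\phi^{z})$ is surjective; the dimensions match since $\dim\mathcal{N}(E^{\top})=IJK-R=(I+J+K-3)+\dim\ker\Theta$. Therefore an arbitrary linearly independent family $\{\mathbf{u}_j\}_{j=1}^{I+J+K-3}\subset\mathcal{N}(E^{\top})$ is sent generically by $\pi$ to a basis of $\mathbb{R}^{I+J+K-3}$, and hence $\Phi$ is invertible generically. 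The main obstacle I expect is the bookkeeping that identifies the coefficient matrix $\Theta$ with the map in Lemma~\ref{lem2.2}(ii): one has to verify that the eight-block expansion of $\mathbf{u}_j^{\top}(\hat{\mathbf{z}}_r\otimes\hat{\mathbf{y}}_r\otimes\hat{\mathbf{x}}_r)$ assembles precisely into the four-block column vector displayed in that lemma, which is presumably the reason the lemma was stated in that particular form. Once this identification is made, the rank count and the generic-invertibility conclusion go through exactly as in Theorem~\ref{thm3.4}.
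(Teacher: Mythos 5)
Your proposal follows exactly the paper's proof: the same reduction to $r=1$ via the three-factor analog of Lemma~\ref{lem3.2}, the same partition of the reshaped third-order array $\mathcal{U}_j$ into axis slices, face slices, and a bulk block, the same Jacobian structure reducing invertibility of $DP$ to invertibility of $\Phi$, the same eight-block expansion of $\mathbf{u}_j^{\top}(\hat{\mathbf{z}}_r\otimes\hat{\mathbf{y}}_r\otimes\hat{\mathbf{x}}_r)$ yielding the linear system $\Theta\mathbf{w}_j=-\mathbf{b}_j$, and the same invocation of Lemma~\ref{lem2.2}(ii) with $R-1\leq IJK-I-J-K+2$ to get $\mathrm{rank}(\Theta)=R-1$. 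Your surjectivity-and-dimension-count framing of the final step is a slightly more explicit packaging of the paper's concluding sentence, but the argument is the same.
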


\begin{proof}
Without loss of generality, we claim that $(\hat{\mathbf{x}}_1,\hat{\mathbf{y}}_1,\hat{\mathbf{z}}_1)$ is isolated. It suffices to show that the Jacobian matrix $DP(\hat{\mathbf{x}}_1,\hat{\mathbf{y}}_1,\hat{\mathbf{z}}_1)$ is invertible. For convenience, we may assume $\hat{\mathbf{ x}}_1=[1,0\cdots,0]^{\top}\in \mathbb{R}^I$, $\hat{\mathbf{ y}}_1=[1,0,\cdots,0]^{\top}\in \mathbb{R}^J$, $\hat{\mathbf{ z}}_1=[1,0,\cdots,0]^{\top}\in \mathbb{R}^K$ and the other $R-1$ solutions are $(\hat{\mathbf{ x}}_r,\hat{\mathbf{ y}}_r,\hat{\mathbf{ z}}_r)$ for $r=2,3,\ldots, R$. For each $j\in \{1,\cdots,I+J+K-3\}$, since $p_j(\hat{\mathbf{x}}_1,\hat{\mathbf{y}}_1,\hat{\mathbf{z}}_1)=0$, we have $\mathbf{u}_j(1)=0$, where $\mathbf{u}_j(1)$ is the first component of vector $\mathbf{u}_j$. Let $\mathcal{U}_j={\rm vec}^{-1}(\mathbf{u}_j)\in \mathbb{R}^{I\times J\times K}$ be a third-order tensor. Then $\mathcal{U}_j(1,1,1)=0$.
Denote
\begin{align}\label{partitionU}
&\left[\begin{array}{c}0\\\phi_j\end{array}\right]\equiv \mathcal{U}_j(:,1,1)\in \mathbb{R}^{I},\ \ \left[\begin{array}{c}0\\\varphi_j\end{array}\right]\equiv \mathcal{U}_j(1,:,1)\in \mathbb{R}^{J},\ \ \left[\begin{array}{c}0\\\xi_j\end{array}\right]\equiv \mathcal{U}_j(1,1,:)\in \mathbb{R}^{K},\nonumber\\
&\widehat{U}_{xy}\equiv \mathcal{U}_j(2:I,2:J,1)\in \mathbb{R}^{(I-1)\times (J-1)},\ \ \widehat{U}_{yz}\equiv \mathcal{U}_j(1,2:J,2:K)\in \mathbb{R}^{(J-1)\times (K-1)},\nonumber\\
&\widehat{U}_{xz}\equiv \mathcal{U}_j(2:I,1,2:K)\in \mathbb{R}^{(I-1)\times (K-1)},\\
&\widehat{\mathcal{U}}_j\equiv \mathcal{U}_j(2:I,2:J,2:K)\in \mathbb{R}^{(I-1)\times (J-1)\times (K-1)}.\nonumber
\end{align}
Then $Dp_j(\hat{\mathbf{x}}_1,\hat{\mathbf{y}}_1,\hat{\mathbf{z}}_1)=[0,\phi_j^{\top}|0,\varphi_j^{\top}|0,\xi_j^{\top}]$.
%\begin{align*}
%\left[\begin{array}{c}0\\\phi_j\end{array}\right]=D_xp_j(\hat{x},\hat{y})=\mathsf{U}_j^{\top}\hat{y}\in\mathbb{R}^{I},\ \ \left[\begin{array}{c}0\\\varphi_j\end{array}\right]=D_yp_j(\hat{x},\hat{y})=\mathsf{U}_j\hat{x}\in\mathbb{R}^{J}.
%\end{align*}
It follows from \eqref{eq4.8} that
\begin{align*}
DP(\hat{\mathbf{x}}_1,\hat{\mathbf{y}}_1,\hat{\mathbf{z}}_1)=\left[\begin{array}{cc|cc|cc}0&\phi_1^{\top}&0&\varphi_1^{\top}&0&\xi_1^{\top}\\
\vdots&\vdots&\vdots&\vdots\\
0&\phi_{I+J+K-3}^{\top}&0&\varphi_{I+J+K-3}^{\top}&0&\xi_{I+J+K-3}^{\top}\\\hline
&\mathbf{ c}_x^{\top} &&0&&0\\
&0 &&\mathbf{ c}_y^{\top}&&0\\
&0 &&0&&\mathbf{ c}_z^{\top}\\
\end{array}\right].
\end{align*}
Since $\mathbf{ c}_x\in \mathbb{R}^I$, $\mathbf{ c}_y\in \mathbb{R}^J$ and $\mathbf{ c}_z\in \mathbb{R}^K$are randomly generated, the Jacobian matrix $DP(\hat{\mathbf{x}}_1,\hat{\mathbf{y}}_1,\hat{\mathbf{z}}_1)$ is invertible if and only if the matrix
\begin{align}\label{eq4.11}
\Phi=\left[\begin{array}{c|c|c}\phi_1^{\top}&\varphi_1^{\top}&\xi_1^{\top}\\
\vdots&\vdots&\vdots\\
\phi_{I+J+K-3}^{\top}&\varphi_{I+J+K-3}^{\top}&\xi_{I+J+K-3}^{\top}\\\end{array}\right]
\end{align}
is invertible. In the following, we  show that $\Phi$ is invertible, generically, if $\{\mathbf{u}_j\}_{j=1}^{I+J+K-3}$ is an arbitrary linearly independent set of   $\mathcal{N}(E^{\top})$. Let
\begin{align*}
\hat{\mathbf{x}}_r=\left[\begin{array}{c}\hat{x}_{r,1}\\\hat{\mathbf{x}}_{r,2}\end{array}\right],\ \
\hat{\mathbf{y}}_r=\left[\begin{array}{c}\hat{y}_{r,1}\\\hat{\mathbf{y}}_{r,2}\end{array}\right],\ \
\hat{\mathbf{z}}_r=\left[\begin{array}{c}\hat{z}_{r,1}\\\hat{\mathbf{z}}_{r,2}\end{array}\right].
\end{align*}
Since $(\hat{\mathbf{x}}_r,\hat{\mathbf{y}}_r,\hat{\mathbf{z}}_r)$ for $r=2,3,\ldots, R$ are solutions of $P(\mathbf{x},\mathbf{y},\mathbf{z})=\mathbf{0}$ in \eqref{eq4.8}, we obtain that for each $j\in \{1,2,\cdots,I+J+K-3\}$, $\mathbf{u}_j$ in \eqref{partitionU} satisfies
\begin{align}\label{eq4.12}
0=~&\mathbf{u}_j^{\top}(\hat{\mathbf{x}}_r\otimes\hat{\mathbf{y}}_r\otimes\hat{\mathbf{z}}_r)\nonumber\\
=~&\hat{y}_{r,1}\hat{z}_{r,1}(\hat{\mathbf{x}}_{r,2}^{\top}\phi_j)+\hat{x}_{r,1}\hat{z}_{r,1}(\hat{\mathbf{y}}_{r,2}^{\top}\varphi_j)+\hat{x}_{r,1}\hat{y}_{r,1}(\hat{\mathbf{z}}_{r,2}^{\top}\xi_j)\nonumber\\
&+\hat{z}_{r,1}\hat{\mathbf{x}}_{r,2}^{\top}\widehat{U}_{xy}\hat{\mathbf{y}}_{r,2}+\hat{x}_{r,1}\hat{\mathbf{y}}_{r,2}^{\top}\widehat{U}_{yz}\hat{\mathbf{z}}_{r,2}+\hat{y}_{r,1}\hat{\mathbf{x}}_{r,2}^{\top}\widehat{U}_{xz}\hat{\mathbf{z}}_{r,2}\nonumber\\
&+(\hat{\mathbf{z}}_{r,2}\otimes\hat{\mathbf{y}}_{r,2}\otimes \hat{\mathbf{x}}_{r,2})^{\top}{\rm vec}(\widehat{\mathcal{U}}_j)\nonumber\\
=~&\hat{y}_{r,1}\hat{z}_{r,1}(\hat{\mathbf{x}}_{r,2}^{\top}\phi_j)+\hat{x}_{r,1}\hat{z}_{r,1}(\hat{\mathbf{y}}_{r,2}^{\top}\varphi_j)+\hat{x}_{r,1}\hat{y}_{r,1}(\hat{\mathbf{z}}_{r,2}^{\top}\xi_j)\nonumber\\
&+\hat{z}_{r,1}(\hat{\mathbf{y}}_{r,2}\otimes\hat{\mathbf{x}}_{r,2})^{\top}{\rm vec}(\widehat{U}_{xy})+\hat{x}_{r,1}(\hat{\mathbf{z}}_{r,2}\otimes\hat{\mathbf{y}}_{r,2})^{\top}{\rm vec}(\widehat{U}_{yz})\nonumber\\
&+\hat{y}_{r,1}(\hat{\mathbf{z}}_{r,2}\otimes\hat{\mathbf{x}}_{r,2})^{\top}{\rm vec}(\widehat{U}_{xz})+(\hat{\mathbf{z}}_{r,2}\otimes\hat{\mathbf{y}}_{r,2}\otimes \hat{\mathbf{x}}_{r,2})^{\top}{\rm vec}(\widehat{\mathcal{U}}_j),
\end{align}
for $r=2,3,\ldots,R$.
Since $\hat{\mathbf{ x}}_r\in \mathbb{R}^{I}$, $\hat{\mathbf{ y}}_r\in \mathbb{R}^J$ and $\hat{\mathbf{z}}_r\in \mathbb{R}^K$ are generic vectors, $\hat{\mathbf{x}}_{r,2}\in \mathbb{R}^{I-1}$, $\hat{\mathbf{y}}_{r,2}\in \mathbb{R}^{J-1}$ and $\hat{\mathbf{z}}_{r,2}\in \mathbb{R}^{K-1}$  are also generic vectors. Let
\begin{align*}
\Theta=\left[\begin{array}{rrr}\hat{z}_{2,1}(\hat{\mathbf{y}}_{2,2}\otimes \hat{\mathbf{x}}_{2,2})&\cdots&\hat{z}_{R,1}(\hat{\mathbf{y}}_{R,2}\otimes \hat{\mathbf{x}}_{R,2})\\
\hat{x}_{2,1}(\hat{\mathbf{z}}_{2,2}\otimes\hat{\mathbf{y}}_{2,2})&\cdots&\hat{x}_{R,1}(\hat{\mathbf{z}}_{R,2}\otimes\hat{\mathbf{y}}_{R,2})\\
\hat{y}_{2,1}(\hat{\mathbf{z}}_{2,2}\otimes\hat{\mathbf{x}}_{2,2})&\cdots&\hat{y}_{R,1}(\hat{\mathbf{z}}_{R,2}\otimes\hat{\mathbf{x}}_{R,2})\\
(\hat{\mathbf{z}}_{2,2}\otimes\hat{\mathbf{y}}_{2,2}\otimes \hat{\mathbf{x}}_{2,2})&\cdots&(\hat{\mathbf{z}}_{R,2}\otimes\hat{\mathbf{y}}_{R,2}\otimes \hat{\mathbf{x}}_{R,2})\\
\end{array}\right]^{\top},\\
 \mathbf{ b}_j=\left[\begin{array}{c}\hat{y}_{2,1}\hat{z}_{2,1}(\hat{\mathbf{x}}_{2,2}^{\top}\phi_j)+\hat{x}_{2,1}\hat{z}_{2,1}(\hat{\mathbf{y}}_{2,2}^{\top}\varphi_j)+\hat{x}_{2,1}\hat{y}_{2,1}(\hat{\mathbf{z}}_{2,2}^{\top}\xi_j)\\ \vdots\\\hat{y}_{R,1}\hat{z}_{R,1}(\hat{\mathbf{x}}_{R,2}^{\top}\phi_j)+\hat{x}_{R,1}\hat{z}_{R,1}(\hat{\mathbf{y}}_{R,2}^{\top}\varphi_j)+\hat{x}_{R,1}\hat{y}_{R,1}(\hat{\mathbf{z}}_{R,2}^{\top}\xi_j)\end{array}\right].
\end{align*}
Then $\Theta\in \mathbb{R}^{(R-1)\times (IJK-I-J-K+2)}$ and $ \mathbf{ b}_j\in \mathbb{R}^{R-1}$.
From Lemma \ref{lem2.2} $(ii)$ and using the fact that $R\leqslant R^*=IJK-I-J-K+3$, we obtain that ${\rm rank}(\Theta)= R-1$. It follows from \eqref{eq4.12} that  $\phi_j$, $\varphi_j$, $\xi_j$, $\widehat{U}_{xy}$,  $\widehat{U}_{yz}$,  $\widehat{U}_{xz}$  and $\widehat{\mathcal{U}}_j$ for $j=1,2,\cdots,I+J+K-3$ should satisfy
\begin{align}\label{eq4.14}
\Theta[{\rm vec}(\widehat{U}_{xy})^{\top}, {\rm vec}(\widehat{U}_{yz})^{\top},{\rm vec}(\widehat{U}_{xz})^{\top},{\rm vec}(\widehat{\mathcal{U}}_j)^{\top}]^{\top}=-\mathbf{ b}_j.
\end{align}
Since ${\rm rank}(\Theta)= R-1$, the linear system \eqref{eq4.14} has solution for arbitrary $\phi_j$, $\varphi_j$ and $\xi_j$.
 Since $\left\{\mathbf{u}_j\right\}_{j=1}^{I+J+K-3}$ is an arbitrary linearly independent set of   $\mathcal{N}(E^{\top})$, the matrix $\Phi$ in \eqref{eq4.11} is invertible generically. Hence, $(\hat{\mathbf{x}}_1,\hat{\mathbf{y}}_1,\hat{\mathbf{z}}_1)$ is isolated.
\end{proof}

When a fourth-order tensor $\mathcal{A}\in \mathbb{R}^{I\times J\times
K\times L}$ has a CP decomposition as in \eqref{4tensor} with $R^{\ast
}<R\leqslant L$. The system \eqref{eq4.6} is an underdetermined system.
Hence, \eqref{eq4.6} has infinitely many real solutions. Similar conclusion
of Theorem \ref{thm_under} can be obtained, i.e., $\mathcal{A}$ has
infinitely many CP decompositions generically.

The homotopy algorithm for computing the CP decomposition of a fourth-order tensor $\mathcal{A}\in \mathbb{R}^{I\times J\times K\times L}$ is summarized in Algorithm 2.

\begin{center}
\renewcommand{\arraystretch}{1.36}
\textbf{Algorithm~2} \vspace{0.1cm}
\begin{tabular}[c]{|llll|}\hline \vspace*{-0.3cm} &&& \\
   & \multicolumn{1}{l}{\textsf{Input}:} &  \multicolumn{2}{l}{$\mathcal{A}\in \mathbb{R}^{I\times J\times K\times L}$ with rank$\left(
\mathcal{A}\right) =R \leq \min \left\{ R^{\ast },L\right\} $.} \\
   & \multicolumn{1}{l}{\textsf{Output}:} & \multicolumn{2}{l}{Factor matrices $\widehat{X}\in \mathbb{R}^{I\times R}$, $\widehat{Y}\in \mathbb{R}%
^{J\times R}$, $\widehat{Z}\in \mathbb{R}^{K\times R}$ and $\widehat{V}\in \mathbb{R}^{L\times
R} $.}  \\
   &&1.& \textsf{Form} the matricization $T\in \mathbb{R}^{IJK\times L}$ of $\mathcal{A}$
as \eqref{T4}.\\
&&2.&  \textsf{Compute} the full rank factorization $T=EF^{\top }$, where $E\in
\mathbb{R}^{IJK\times R}$\\
&& &  has orthonormal columns and $F\in \mathbb{R}%
^{L\times R}$ has full column rank.\\
&&3.&  \textsf{Compute} a basis $\left\{ \mathbf{u}_{1},\mathbf{u}_{2},\cdots ,%
\mathbf{u}_{IJK-R}\right\} $ of nullspace $\mathcal{N}(E^{\top })$ \\
&&&    and
construct the square polynomial system \eqref{eq4.8}.\\
&&4.&  \textsf{Find} the real solutions of the system $P\left( \mathbf{x},\mathbf{y},\mathbf{z}\right)=\mathbf{0}$ in \eqref{eq4.8}.\\
&&5.&  \textsf{Find} the $R$ real solutions $\left\{ (\hat{\mathbf{x}}_{i},\hat{\mathbf{y}}_{i},\hat{\mathbf{z}}_{i})\right\} _{i=1}^{R}$ that satisfy  \eqref{drop4}.  \\
&&6.&  \textsf{Stack} $\left\{ (\hat{\mathbf{x}}_{i},\hat{\mathbf{y}}_{i},\hat{\mathbf{z}}_{i})\right\}
_{i=1}^{R}$
in factor matrices $\widehat{X} =[\hat{\mathbf{x}}_{1},\hat{\mathbf{x}}_{2},\cdots,\hat{\mathbf{x}}_{R}]$,\\
&&&
$\widehat{Y} =[\hat{\mathbf{y}}_{1},  \hat{\mathbf{y}}_{2},  \cdots,  \hat{\mathbf{y}}_{R}],\text{ and }\widehat{Z}=[\hat{\mathbf{z}}_{1},\hat{\mathbf{z}}_{2},\cdots,\hat{\mathbf{z}}_{R}].$\\
&&7.&  $W=E^{\top}\cdot ( \widehat{Z}\odot \widehat{Y}\odot \widehat{X}) \in \mathbb{R}^{R\times R}$ and $\widehat{V}=F\cdot W^{-\top}\in \mathbb{R}^{L\times R}$.\\
   \hline
\end{tabular}
\end{center}

In step 4, the square polynomial system $P\left( \mathbf{x},\mathbf{y},\mathbf{z}\right)=\mathbf{0}$ can be solved by the homotopy continuation method with homotopy
\begin{equation*}
H(\mathbf{x},\mathbf{y},\mathbf{z},t)=(1-t)\gamma Q_{0}\left( \mathbf{x},\mathbf{y},\mathbf{z}\right) +tP\left( \mathbf{x},\mathbf{y},\mathbf{z},\right) = \mathbf{0},
\end{equation*}
where the starting system
\begin{equation*}
Q_{0}\left( \mathbf{x},\mathbf{y},\mathbf{z}\right) =\left\{
\begin{array}{l}
\left( \alpha _{1}^{\top}\mathbf{x}\right) \left( \beta _{1}^{\top}\mathbf{y}%
\right) \left( \gamma _{1}^{\top}\mathbf{z}\right) \\
\left( \alpha _{2}^{\top}\mathbf{x}\right) \left( \beta _{2}^{\top}\mathbf{y}%
\right) \left( \gamma _{2}^{\top}\mathbf{z}\right) \\
\text{ \ \ }\vdots \\
\left( \alpha _{I+J+K-3}^{\top}\mathbf{x}\right) \left( \beta _{I+J+K-3}^{\top}%
\mathbf{y}\right) \left( \gamma _{I+J+K-3}^{\top}\mathbf{z}\right) \\
\mathbf{c}_{x}^{\top }\mathbf{x}-1 \\
\mathbf{c}_{y}^{\top }\mathbf{y}-1 \\
\mathbf{c}_{z}^{\top }\mathbf{z}-1\text{ }%
\end{array}%
\right.    %\label{eq5.3}
\end{equation*}%
in which any $I$ vectors of $\left\{ \alpha _{1},\ldots ,\alpha_{I+J+K-3}\right\} $, any $J$ vectors of $\left\{ \beta _{1},\ldots ,\beta _{I+J+K-3}\right\} $, and any $K$ vectors of $\left\{
\gamma _{1},\ldots ,\gamma _{I+J+K-3}\right\} $ are linearly
independent. Note that $P$ and $Q_0$ have the same multi-homogeneous B\'{e}zout's
number $M=\frac{\left( I-1+J-1+K-1\right) !}{\left(
I-1\right) !\left( J-1\right) ! \left( K-1\right) !}$, which is the upper bound of the number of isolated solutions of $P\left(\mathbf{x},\mathbf{y},\mathbf{z}\right) = \mathbf{0}$.

\bigskip

\section{Numerical computations and experiments}

In this section, we consider the numerical computation for the CP
decomposition. The test tensors are generated in the following way:%
\begin{equation}
\mathcal{\tilde{A}}=\mathcal{A}+\theta \frac{\mathcal{N}}{\left\Vert
\mathcal{N}\right\Vert_F }  \label{eq6.1}
\end{equation}%
in which $\theta $ denotes the noise level, and $\mathcal{A}$ has exact rank-%
$R$ CP decomposition in \eqref{decom}.
The entries of tensor $\mathcal{N}$ are randomly generated with standard
normal distribution $N(0,1)$.

For a small nonzero noise level, the matricization $\tilde{T}$ of $\mathcal{%
\tilde{A}}$ is generally of full rank, but it is close to a rank-$R$ matrix \cite{Lee:2009}.
In this case, we resume the rank-$R$ matrix by truncating small singular
value terms in singular value decomposition. Let the SVD of $\tilde{T}\in
\mathbb{R}^{m\times n}$ be
\begin{equation*}
\tilde{T}=U\Sigma V^{\top}=\sigma _{1}\mathbf{u}_{1}\mathbf{v}_{1}^{\top}+\sigma
_{2}\mathbf{u}_{2}\mathbf{v}_{2}^{\top}+\cdots +\sigma _{n}\mathbf{u}_{n}%
\mathbf{v}_{n}^{\top}\text{,}
\end{equation*}%
where $U=[
\mathbf{u}_{1},  \cdots ,  \mathbf{u}_{m}] $ and $V=[
\mathbf{v}_{1},  \cdots ,  \mathbf{v}_{n}] $ are orthonormal matrices and the diagonal matrix $\Sigma
=\rm{diag}\left\{ \sigma _{1},\ldots ,\sigma _{n}\right\} $ has singular values $%
\sigma _{1}\geq \sigma _{2}\geq \ldots \geq \sigma _{n}$. If $\sigma
_{R+1},\sigma _{R+2},\ldots ,\sigma _{n}$ are as small as the magnitude of
noise level, the rank-R matrix $U_{R}\Sigma _{R}V_{R}^{\top}$ will be taken as
the matricization $T$ where $U_{R}=[
\mathbf{u}_{1},  \cdots ,  \mathbf{u}_{R}] $, $V_{R}=[
\mathbf{v}_{1},  \cdots ,  \mathbf{v}_{R}] $ and $\Sigma _{R}=\rm{diag}\left\{ \sigma _{1},\ldots ,\sigma
_{R}\right\} $. The SVD provides the full rank factorization of $T=EF^{\top}$
in the step 2 by setting $E=[
\mathbf{u}_{1},  \cdots ,  \mathbf{u}_{R}] $ and $F=V_{R}\Sigma _{R}$.

All numerical experiments are carried out in Matlab with machine precision $%
\epsilon _{machine}\approx 2.2\times 10^{-16}$. For solving the polynomial
equation in step 4, we use Matlab version HOM4PS2 \cite{Lee:2008}, which
implements the homotopy method for solving polynomial systems. The accuracy
is measured in terms of relative error $err=\Vert \mathcal{A}-\mathcal{\hat{A}}\Vert_F /\Vert \mathcal{A}\Vert_F$, where $\mathcal{\hat{A}}$ is the computed
tensor.

In the first experiment, we consider test tensors that are generated by the
rank-$4$ tensor $\mathcal{A}\in \mathbb{R}^{3\times 3\times 6}$ as \eqref{ex1} in \textbf{Example 1} and contaminated with noise as in \eqref{eq6.1}.
For a given noise level, the \textbf{Algorithm 1} runs 100 times with different
noise. Figure 1 shows the effect of varying the noise level $\theta $ on the
error $err$. In this experiment, the relative errors are roughly at
the noise level.

\begin{figure}[tbp]
\begin{center}
\epsfig{figure=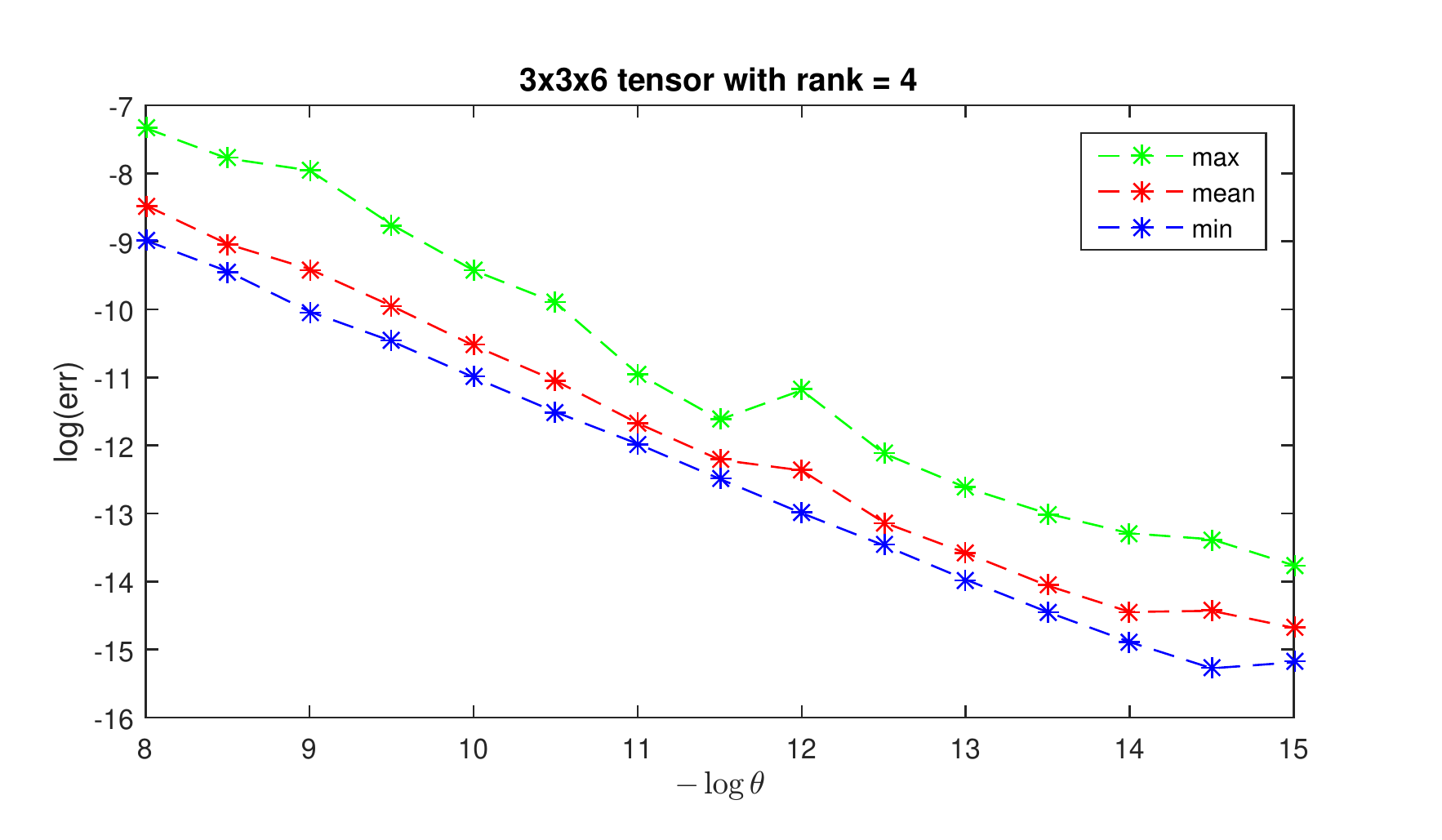,height=2.2in}
\end{center}
\caption{Relative error in the first experiment.}
\label{fig1}
\end{figure}

In the second experiment, the test tensors are generated by the rank-$28$
tensor $\mathcal{A}\in \mathbb{R}^{3\times 3\times 4\times 30}$ and are
perturbed by noise. We generate the rank-$28$ tensor by
\begin{equation*}
\mathcal{A}=\sum_{r=1}^{28}\mathbf{x}_{r}\circ \mathbf{y}%
_{r}\circ \mathbf{z}_{r}\circ \mathbf{v}_{r},
\end{equation*}%
where the entries of $\mathbf{x}_{r}$, $\mathbf{y}
_{r}$, $\mathbf{z}_{r}$ and  $\mathbf{v}_{r}$ are randomly generated with
distribution $N(0,1)$. For a given noise level, \textbf{Algorithm 2} runs 100
times with different noise. The effect of varying the noise level $\theta $
on the error $err$ is shown in Figure 2. We can see that the relative errors are roughly at
the noise level when the noise level is larger than $10^{-13}$. In both experiments, \textbf{Algorithm 1} and \textbf{Algorithm 2} can always find the CP decomposition.
\begin{figure}[h]
\begin{center}
\epsfig{figure=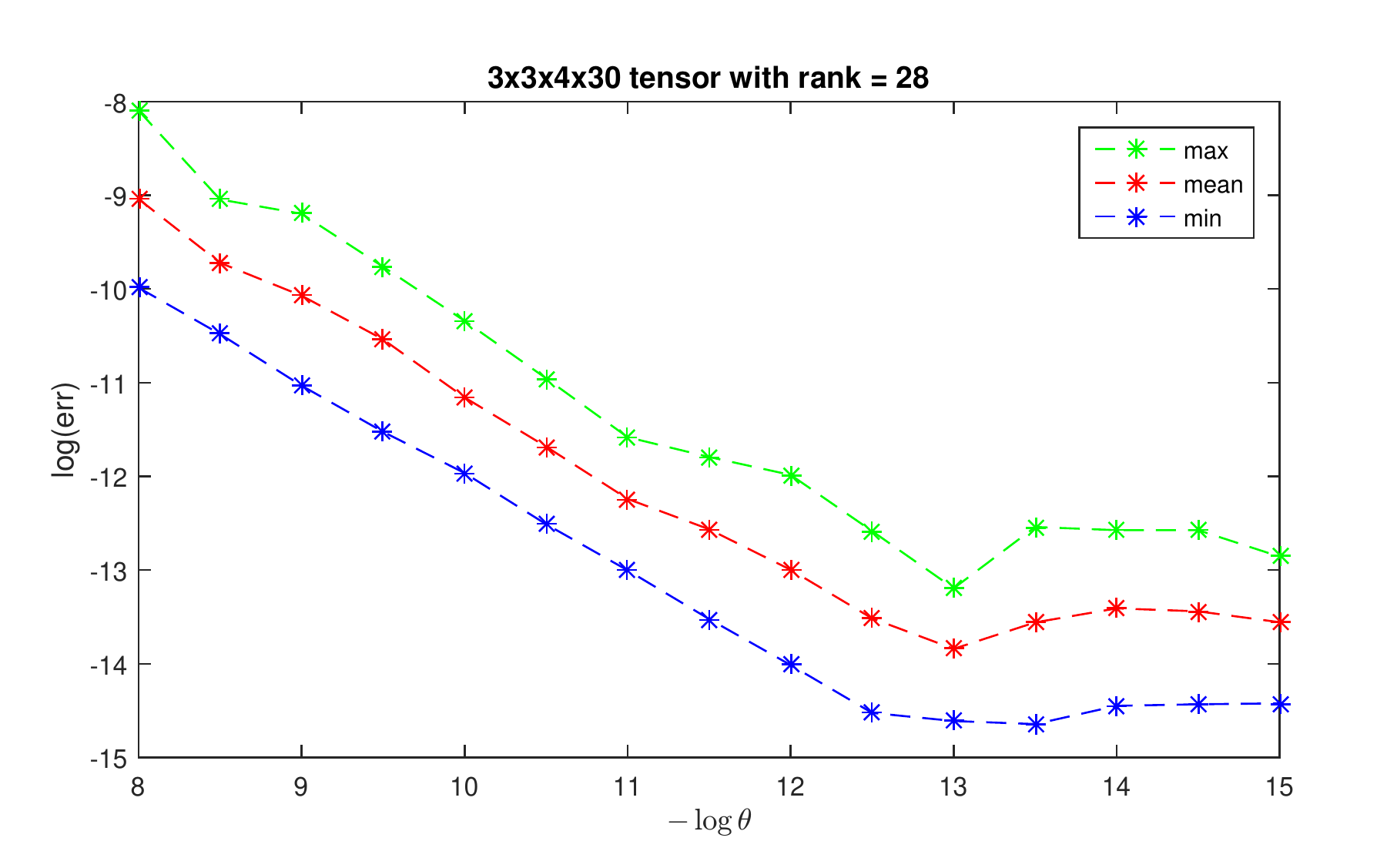,height=2.2in}
\end{center}
\caption{Relative error in the second experiment.}
\label{fig2}
\end{figure}

\bigskip

% ======================================================================

\end{document}